\newcommand{\vf}{\varphi}
\newcommand{\bC}{\mathbb{C}}
\newcommand{\bB}{\mathbb{B}}
\newcommand{\cB}{\mathfrak{B}}
\newcommand{\bE}{\mathbb{E}}
\newcommand{\bJ}{\mathbb{J}}
\newcommand{\Om}{\Omega}
\newcommand{\om}{\omega}
\newcommand{\aaa}{{\bf{\mathfrak{a}}}}
\newcommand{\bbb}{{\bf b}}
\newcommand{\lra}{\longrightarrow}
\newcommand{\ol}{\overline}
\newtheorem{theo}{Theorem}[section]
\newtheorem{Lem}[theo]{Lemma}
\newtheorem{Prop}[theo]{Proposition}
\newtheorem{coro}[theo]{Corollary}
\theoremstyle{definition}
\newtheorem{defi}[theo]{Definition}
\newtheorem{ex}{Example}[section]
\theoremstyle{remark}
\newtheorem{axiom}{Axiom}
\DeclareMathOperator{\ann}{Ann} \DeclareMathOperator{\lann}{lAnn}
\DeclareMathOperator{\rann}{rAnn} \DeclareMathOperator{\act}{Act}
\DeclareMathOperator{\alt}{Alt} \DeclareMathOperator{\galt}{gAlt}
\DeclareMathOperator{\asoci}{Asoci}
\DeclareMathOperator{\soci}{Soci} \DeclareMathOperator{\bim}{Bim}
\DeclareMathOperator{\ch}{char} \DeclareMathOperator{\Ker}{Ker}
\numberwithin{equation}{section}
\begin{document}

\title{Actor of an alternative algebra}

\author[J.M. Casas]{J.M. ~Casas}
\address{\small \rm Jos\'e Manuel Casas: Departamento de Matem\'atica Aplicada I, Universidad de Vigo, Pontevedra, E-36005,  Spain}
\email{jmcasas@uvigo.es}
\author[T. Datuashvili]{T. ~Datuashvili}
\address{\small \rm Tamar Datuashvili: A. Razmadze Math. Inst., Alexidze str.1, 0193 Tbilisi, Georgia}
\email{tamar@rmi.acnet.ge}
\author[M. Ladra]{M. ~Ladra}
\address{\small \rm  Manuel Ladra: Departamento de \'Algebra, Universidad de Santiago de Compostela\\15782
Santiago de Compostela, Spain}
\email{manuel.ladra@usc.es}

\subjclass{08C05, 17D05}

\keywords{alternative algebra,  action, universal
strict general actor,  actor,  category of interest}

\begin{abstract}
We define a category $\galt$ of g-alternative algebras over a
field $F$ and present the category of alternative algebras $\alt$
as a full subcategory of $\galt$; in the case $\ch F\neq 2$, we
have $\alt=\galt$. For any g-alternative algebra $A$ we give a
construction of a universal strict general actor $\cB(A)$ of $A$.
We define the subset $\asoci(A)$ of $A$, and show that it is a
$\cB(A)$-substructure of $A$. We prove that if $\asoci(A)=0$, then
there exists an actor of $A$ in $\galt$ and $\act(A)=\cB(A)$. In
particular, we obtain that if $A$ is anticommutative and
$\ann(A)=0$, then there exists an actor of $A$ in $\galt$; from
this, under the same conditions, we deduce the existence of an
actor in $\alt$.
\end{abstract}

\maketitle

\section{Introduction}
 The paper is dedicated to the problem of the existence of
 universal acting objects - actors in the category of alternative algebras (see Section \ref{def}
for the definitions).  Actions in algebraic categories were
studied in \cite{Ho}, \cite{Mc}, \cite{LS}, \cite{Lu}, \cite{No},
\cite{Lo}, \cite{LL} and in other papers. The authors were looking
for the analogs of the group of automorphisms of a group for
associative algebras, rings, commutative associative algebras, Lie
algebras, crossed modules and Leibniz algebras. They gave the
constructions of universal objects (universal in different senses)
in the corresponding categories and studied their properties.
These objects are: the ring of bimultiplications of a ring, the
associative algebras of multiplications (or bimultipliers) of an
associative algebra and multiplications (or multipliers) of a
commutative associative algebra, the actor of a crossed module,
the Lie algebra of derivations and the Leibniz algebra of
biderivations of a Lie and a Leibniz algebras, respectively. In
\cite{BJ} was proposed a categorical approach to this problem,
which was continued in \cite{BJK}, \cite{BJK1}, \cite{BBJ},
\cite{Bou} and \cite{BB}. In particular, representable internal
object action was defined and necessary and sufficient conditions
of its existence in a semi-abelian category was established. In
\cite{CDL2} for any category of interest $\bC$, in the sense of
\cite{Orz}, we defined the corresponding category of groups with
operations $\bC_G$, $\bC\subseteq \bC_G$; for any object $A\in
\bC$ we defined an actor $\act(A)$ of $A$; this notion is
equivalent to the one of split extension classifier for $A$,
defined in a semi-abelian category in \cite{BJK}. In the same
paper we gave a definition and a construction of a universal
strict general actor $\cB(A)$ of $A$, which is an object in
$\bC_G$ in general and has all universal properties of the objects
listed above. We proved that there exists an actor of $A$ in $\bC$
if and only if the semidirect product $\cB(A)\ltimes A$ is an
object in $\bC$ and, in this case, $\cB(A)$ is an actor of $A$ in
$\bC$. Applying this result we considered examples of groups,
associative algebras, Lie and Leibniz algebras, crossed modules
and precrossed modules. In \cite{CDL3} we gave the construction of
an actor of a precrossed module, where we introduced the notion of
a generalized Whitehead's derivation. In the present paper we
define a category of general alternative (g-alternative) algebras
over a field $F$ (denoted by $\galt$), which is a category of
interest. We present the category of alternative algebras (denoted
by $\alt$) as a full subcategory of $\galt$. In the case, where
$\ch F \neq 2$, we have $\alt=\galt$. Applying the results of
\cite{CDL2}, for any g-alternative algebra $A$ we give a
construction of a universal strict general actor of $A$ and obtain
sufficient conditions for the existence of an actor of $A$ in
$\galt$. From this we easily deduce analogous results for the case
of alternative algebras.

In Section \ref{def} we recall the definitions of a category of
interest, the general category of groups with operations of a
category of interest, and give examples. We recall the definitions
of structure, derived action and actor in a category of interest.
We state a necessary and sufficient condition for the existence of
an actor in a category of interest in terms of a universal strict
general actor. The construction of this object for the case of the
category $\galt$ is given in Section \ref{acting}. In the
beginning of this section we define derived actions in the
categories of g-alternative and alternative algebras. We state
some properties of g-alternative algebras in terms of the axioms
of the corresponding category of interest, which are essentially
well-known for the case of alternative algebras. Then we give a
construction of a universal strict general actor $\cB(A)$ of a
g-alternative algebra $A$; as special cases of known definitions
we define a semidirect product and the center of an object in
$\galt$ (equivalently, in $\alt$). In Section \ref{properties} for
any g-alternative algebra $A$ we define the $\cB(A)$-substructure
$\soci(A)$ of $A$, due to which we define certain subset
$\asoci(A)$, which turned out to be a $\cB(A)$-substructure of
$A$. We study the properties of this object, which are applied in
what follows. In Section \ref{sufficient} we give an example of
algebra, which shows that the object $\cB(A)$ is not a
g-alternative algebra in general; we state sufficient conditions
for $\cB(A)\in \galt$. In Section \ref{existence_galt} we give
examples of algebras for which the action of $\cB(A)$ on $A$ is
not a derived action in general. The final result states that if
$\asoci(A)=0$, then $\cB(A)$ is a g-alternative algebra, the
action of $\cB(A)$ on $A$ is a derived action in $\galt$ and
according to the general result, given in \cite{CDL3},
$\cB(A)=\act(A)$. In particular, if $A$ is anticommutative (i.e.
$aa'=-a'a$, for any $a,a' \in A$) and $\ann(A)=0$, then there
exists an actor of $A$ in $\galt$. At the end of the section we
give another description of an actor of a g-alternative algebra in
terms of a certain algebra of bimultiplications, defined in
analogous way as for the case of rings or associative algebras. In
Section \ref{existence_alt} we investigate more properties of the
object $\cB(A)$ under the conditions that $A$ is anticommutative
with $\ann(A)=0$; in particular, it is proved that $\cB(A)\in
\alt$ and its action on $A$ is a derived action in $\alt$, which
give the sufficient conditions for the existence of an actor in
the category of alternative algebras. Note that in the cases of
associative algebras and rings the algebras of bimultiplications
played an important role in the extension and obstruction theories
\cite{Ho,Mc}. From this point of view the results obtained in this
paper could be applied to a cohomology and the corresponding
extension and obstruction theories of alternative algebras.

\section{Preliminary definitions and results} \label{def}
This section contains some well-known and new definitions and
results which will be used in what follows.

Let $\bC$ be a category of groups with a set of operations $\Om$ and
with a set of identities $\bE$, such that $\bE$ includes the group
identities and the following conditions hold. If $\Om_i$ is the set
of $i$-ary operations in $\Om$, then:
\begin{itemize}
\item[(a)]
$\Om=\Om_0\cup\Om_1\cup\Om_2$;
 \item[(b)] the group operations
(written additively: ($0,-,+$) are elements of $\Om_0$, $\Om_1$ and
$\Om_2$ respectively. Let $\Om_2'=\Om_2\setminus\{+\}$,
$\Om_1'=\Om_1\setminus\{-\}$ and assume that if $*\in\Om_2$, then
$\Om_2'$ contains $*^\circ$ defined by $x*^{\circ}y=y*x$. Assume
further that $\Om_0=\{0\}$;
 \item[(c)] for each $*\in\Om_2'$, $\bE$
includes the identity $x*(y+z)=x*y+x*z$;
 \item[(d)] for each
$\om\in\Om_1'$ and $*\in\Om_2'$, $\bE$ includes the identities
$\om(x+y)=\om(x)+\om(y)$ and $\om(x)*y=\om(x*y)$.
\end{itemize}
Note that the group operation is denoted additively, but it is not
commutative in general. A category $\bC$ defined above is called a
\emph{category of groups with operations}. The idea of the
definition comes from \cite{Hig} and axioms are from \cite{Orz}
and \cite{Por}. We formulate two more axioms on $\bC$ (Axiom~(7)
and Axiom~(8) in \cite{Orz}).

If $C$ is an object of $\bC$ and $x_1,x_2,x_3\in C$:

\begin{axiom}
 $x_1+(x_2*x_3)=(x_2*x_3)+x_1$, for each
$*\in\Om_2'$.
\end{axiom}

\begin{axiom} For each ordered pair
$(*,\ol{*})\in\Om_2'\times\Om_2'$ there is a word $W$ such that
\begin{gather*}
(x_1*x_2)\ol{*}x_3=W(x_1(x_2x_3),x_1(x_3x_2),(x_2x_3)x_1,\\
(x_3x_2)x_1,x_2(x_1x_3),x_2(x_3x_1),(x_1x_3)x_2,(x_3x_1)x_2),
\end{gather*}
where each juxtaposition represents an operation in $\Om_2'$.
\end{axiom}

A category of groups with operations satisfying  Axiom~1 and Axiom~2
is called a {\it category of interest} in \cite{Orz}.

Denote by $\bE_G$ the subset of identities of $\bE$ which includes
the group identities and the identities (c) and (d). We denote by
$\bC_G$ the corresponding category of groups with operations. Thus
we have $\bE_G \hookrightarrow \bE$, $\bC=(\Om,\bE)$,
$\bC_G=(\Om,\bE_G)$ and there is a full inclusion  functor
$\bC\hookrightarrow\bC_G$. The category $\bC_G$ is called a
\emph{general category of groups with operations} of a category of
interest $\bC$ in \cite{CDL1} and \cite{CDL2}.

\noindent {\bf Examples of categories of interest.} In the case of
the category of associative algebras with multiplication
represented by $*$, we have $\Om_2'=\{*,*^\circ\}$. For Lie
algebras take $\Om_2'=([\;,\;],[\;,\;]^\circ)$ (where
$[a,b]^\circ=[b,a]=-[a,b]$). For Leibniz algebras (see \cite{Lo}),
take $\Om_2'=([\;,\;],[\;,\;]^\circ)$, (here $[a,b]^\circ=[b,a]$).
It is easy to see that the categories of all these algebras are
categories of interest. In the cases of the categories of groups,
abelian groups and modules over a ring we have
$\Om_2'=\varnothing$. As it is noted in \cite{Orz} Jordan algebras
do not satisfy Axiom~2. Below we will see that the category of
alternative algebras is a category of interest as well.

\begin{defi}\cite{Orz}
Let $A$, $B\in\bC$. An \emph{extension} of $B$ by $A$ is a sequence
\begin{equation} \label{extension}
\xymatrix{0\ar[r]&A\ar[r]^-{i}&E\ar[r]^-{p}&B\ar[r]&0}
\end{equation}
in which $p$ is surjective  and  $i$ is the kernel of $p$. We say
that an extension is \emph{split}  if there is a morphism $s \colon B\lra E$ such
that $ps=1_B$.
\end{defi}

\begin{defi}\cite{Orz}
A split extension of $B$ by $A$ is called a $B$-\emph{structure} on
$A$.
\end{defi}
According to \cite{Orz}, for $A,B\in\bC$ ``a set of actions of $B$
on $A$'', means that there is a map $f_* \colon B\times A\lra A$, for
each $*\in\Om_2$. Instead of ``set of actions'' often is used for
simplicity ``an action'', and it means that there is a set of
actions $\{f_*\}_ {*\in\Om_2}$. Note that a set of actions has a
different meaning in \cite{BJK}.

A $B$-structure on $A$ induces a set of  actions of $B$ on $A$
corresponding to the operations in $\bC$. If \eqref{extension} is a split
extension, then for $b\in B$, $a\in A$ and $*\in\Om_2{}'$ we have
\begin{align}
b\cdot a &=s(b)+a-s(b),  \label{derived_dot}\\
b*a&=s(b)*a.  \label{derived_star}
\end{align}
Actions defined by \eqref{derived_dot} and \eqref{derived_star} are called \emph{derived  actions} of
$B$ on $A$ in \cite{Orz}. Under $B$-\emph{substructure} of $A$
naturally we will mean a subobject $A'$ of $A$ which is closed under
all derived actions of $B$ on $A'$, i.e. left and right derived
actions.

In the case of associative algebras over a ring $R$ a derived
action of $B$ on $A$ is a pair of $R$-bilinear maps
\begin{equation}\label{bilinear}
B\times A\lra A,\quad A\times B\lra A
\end{equation}
which we denote respectively as $(b,a)\mapsto ba$, $(a,b)\mapsto
ab$, with conditions
\begin{align*}
&\quad(b_1b_2)a=b_1(b_2a),\\
&\quad a(b_1b_2)=(ab_1)b_2,\\
&\quad(b_1a)b_2=b_1(ab_2),\\
&\quad(a_1a_2)b=a_1(a_2b),\\
&\quad b(a_1a_2)=(ba_1)a_2,\\
&\quad (a_1b)a_2=a_1(ba_2),
\end{align*}
for any $a_1, a_2\in A$ and $b_1, b_2\in B$. Note that these
identities are well-known, and they can be obtained easily from
the definition of a derived action \eqref{derived_star} and the
associativity axiom.

According to \cite{Orz}, in any category of interest, given a set
of actions of $B$ on $A$, the \emph{semidirect product} $B\ltimes
A$ is the universal algebra, whose underlying set is $B\times A$
and the operations are defined by
\begin{align*}
(b',a')+(b,a)&=(b'+b,a'+b'\cdot a),\\
(b',a')*(b,a)&=(b'*b,a'*a+a'*b+b'*a).
\end{align*}

\begin{theo}\cite{Orz} \label{derived_semi}
A set of actions of $B$ on $A$ is a set of derived actions if and
only if $B\ltimes A$ is an object of $\mathbb{C}$.
\end{theo}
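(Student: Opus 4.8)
The plan is to prove both implications through the standard correspondence between split extensions and semidirect products, reducing the forward direction to transporting the identities of $\bE$ along an isomorphism onto the middle object $E$ of the extension.

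\textbf{The forward implication.} Suppose the given set of actions is a set of derived actions, so that it arises from a split extension \eqref{extension} with section $s$ via the formulas \eqref{derived_dot} and \eqref{derived_star}. I would define a map
\[
\phi\colon B\ltimes A\lra E,\qquad \phi(b,a)=i(a)+s(b),
\]
and show it is an isomorphism of $\Om$-algebras. Bijectivity is immediate from $i(A)=\Ker p$ and $ps=1_B$: applying $p$ recovers $b$, and then $e-s(p(e))\in\Ker p=i(A)$ recovers $a$. Since $E\in\bC$ satisfies every identity of $\bE$ and these identities are preserved under an isomorphism of $\Om$-algebras, it would follow that $B\ltimes A\in\bC$.

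\textbf{Checking that $\phi$ is a homomorphism.} For $+$, expanding $\phi\bigl((b',a')+(b,a)\bigr)=i(a'+b'\cdot a)+s(b'+b)$ and comparing with $\phi(b',a')+\phi(b,a)$ reduces, after using that $s$ is additive, exactly to the identity $b'\cdot a=s(b')+a-s(b')$ of \eqref{derived_dot}. For each $*\in\Om_2'$ I would first note that two-sided distributivity is available: axiom~(c) gives $x*(y+z)=x*y+x*z$, and applying it to $*^\circ$ gives $(x+y)*z=x*z+y*z$. Expanding $\phi(b',a')*\phi(b,a)=(i(a')+s(b'))*(i(a)+s(b))$ by distributivity produces four $*$-products, which by \eqref{derived_star}, its right-hand analogue, and the fact that $s$ preserves $*$ match the four summands $i(a'*a)$, $i(a'*b)$, $i(b'*a)$, $s(b'*b)$ appearing in $\phi\bigl((b',a')*(b,a)\bigr)$. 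The unary operations are handled routinely via axiom~(d), and the group inverse automatically, since $\phi$ is additive.

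\textbf{The backward implication.} Suppose $B\ltimes A\in\bC$. I would exhibit the canonical split extension with $i(a)=(0,a)$, $p(b,a)=b$ and section $s(b)=(b,0)$; one checks directly from the semidirect-product operations that these are $\bC$-morphisms, that $i(A)=\Ker p$, and that $ps=1_B$, so \eqref{extension} is split. It then remains to compute the derived actions it induces: formula \eqref{derived_dot} gives $(b,0)+(0,a)-(b,0)=(0,b\cdot a)$ and formula \eqref{derived_star} gives $(b,0)*(0,a)=(0,b*a)$, so under the identification $A\cong\{0\}\times A$ the induced derived actions coincide with the original $\{f_*\}_{*\in\Om_2}$.

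\textbf{Main obstacle.} The only genuinely delicate step is verifying that $\phi$ respects the operations $*$ in the forward direction, since $+$ is not commutative: rearranging the four expanded products into the prescribed order of the semidirect-product formula requires Axiom~1, which guarantees that every element of the form $x*y$ is central and hence that these products commute with one another and with all group elements. Once this centrality is invoked the matching of terms is purely formal, and the remaining verifications are bookkeeping.
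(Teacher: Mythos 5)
Your proof is correct: the isomorphism $\phi(b,a)=i(a)+s(b)$ between $B\ltimes A$ and the middle term $E$ of the split extension (using Axiom~1 to reorder the centrally placed products $x*y$), together with the canonical split extension of $B\ltimes A$ for the converse, is exactly the standard argument. Note that the paper itself gives no proof of this statement --- it is quoted from Orzech's paper \cite{Orz} --- so there is nothing in the text to compare against, but your argument matches the one given there.
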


\begin{defi}\cite{CDL1,CDL2}\label{actor}
For any object $A$ in $\bC$, an actor of $A$ is an object
$\act(A)$ in $\bC$, which has a derived action on $A$ in $\bC$,
and for any object $C$ of $\bC$ and a derived action of $C$ on $A$
there is a unique morphism $\vf \colon C\lra \act(A)$ with $c\cdot
a=\vf(c)\cdot a$, $c*a=\vf(c)*a$ for any $*\in\Om_2{}'$, $a\in A$
and $c\in C$.
\end{defi}

In \cite{CDL2}, for any object $A$ of a category of interest
$\bC$, we define a universal strict general actor $\cB(A)$ of $A$,
which is an object of $\bC_G$, and give the corresponding
construction. We present this construction for the case of
g-alternative algebras (see below the definition) in Section
\ref{acting}.

\begin{theo}\cite{CDL2} \label{actor_semi}
Let $\bC$ be a category of interest and $A\in\bC$. $A$ has an actor
if and only if the semidirect product $\cB(A)\ltimes A$ is an object
of $\bC$. If it is the case, then $\act(A)=\cB(A)$.
\end{theo}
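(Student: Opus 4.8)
The plan is to prove the two implications separately, using Theorem~\ref{derived_semi} as the bridge between derived actions and membership of the semidirect product in $\bC$, together with the defining universal property of $\cB(A)$: its canonical general action on $A$ receives, for every $C\in\bC$ equipped with a derived action on $A$, a unique morphism $\vf\colon C\lra\cB(A)$ in $\bC_G$ through which that derived action factors, i.e. with $c\cdot a=\vf(c)\cdot a$ and $c*a=\vf(c)*a$ for all $*\in\Om_2{}'$, $a\in A$, $c\in C$.

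For the implication $(\Leftarrow)$, suppose $\cB(A)\ltimes A$ is an object of $\bC$. First I would invoke Theorem~\ref{derived_semi} to conclude that the canonical general action of $\cB(A)$ on $A$ is a set of derived actions. Next I would check that $\cB(A)$ itself lies in $\bC$: it is carried isomorphically onto the subobject $\{(b,0)\mid b\in\cB(A)\}$ of $\cB(A)\ltimes A$ (closure under $+$ and under each $*$ follows from $b\cdot 0=0$ and bilinearity), and since every identity of $\bE$ is inherited by subobjects, $\cB(A)\in\bC$. Now $\cB(A)\in\bC$ carries a derived action on $A$, and it remains to verify Definition~\ref{actor}. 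Given any $C\in\bC$ with a derived action on $A$, the universal property of $\cB(A)$ supplies a unique $\bC_G$-morphism $\vf\colon C\lra\cB(A)$ with $c\cdot a=\vf(c)\cdot a$ and $c*a=\vf(c)*a$; because $C,\cB(A)\in\bC$ and the inclusion $\bC\hookrightarrow\bC_G$ is full, $\vf$ is automatically a morphism of $\bC$, and its uniqueness is inherited. Hence $\cB(A)$ satisfies Definition~\ref{actor}, that is, $\act(A)=\cB(A)$.

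For the implication $(\Rightarrow)$, suppose $A$ has an actor $\act(A)\in\bC$. Its derived action on $A$ is in particular a general action, so the universal property of $\cB(A)$ produces a unique comparison morphism $\psi\colon\act(A)\lra\cB(A)$ in $\bC_G$ compatible with the two actions. The goal is to show that $\psi$ is an isomorphism: once this is known, $\cB(A)\cong\act(A)$ lands in $\bC$, the canonical general action of $\cB(A)$ corresponds under $\psi$ to the derived action of $\act(A)$, and Theorem~\ref{derived_semi} then gives $\cB(A)\ltimes A\cong\act(A)\ltimes A\in\bC$; combined with the previous paragraph this also yields $\act(A)=\cB(A)$.

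The main obstacle is exactly the surjectivity of $\psi$. The two universal properties, on their own, only manufacture the single morphism $\psi$; a morphism in the reverse direction would require knowing in advance that the general action of $\cB(A)$ is already derived in $\bC$, which is precisely the conclusion sought, so the argument cannot be closed purely formally. Instead I would descend to the explicit construction of $\cB(A)$ as a system of (general) bimultiplications of $A$ and argue that, under the hypothesis that an actor exists, every such general bimultiplication is realized by the derived action of $\act(A)$, forcing $\psi$ onto; injectivity of $\psi$ would come from the strictness built into $\cB(A)$ together with the compatibility of $\psi$ with the actions. This concrete surjectivity step, rather than any abstract manipulation of the universal properties, is where the real work lies.
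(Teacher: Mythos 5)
The paper does not prove this theorem at all: it is quoted verbatim from \cite{CDL2} and used as a black box, so there is no in-paper argument to compare against. Judged on its own, your plan is sound and is essentially the argument of \cite{CDL2}. The $(\Leftarrow)$ direction is complete: $\cB(A)$ sits inside $\cB(A)\ltimes A$ as the subalgebra $\{(b,0)\}$, identities pass to subalgebras, Theorem~\ref{derived_semi} makes the action derived, and fullness of $\bC\hookrightarrow\bC_G$ turns the universal property of $\cB(A)$ into the actor property.

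For $(\Rightarrow)$ you correctly isolate the crux, but you stop short of executing it, and the one step you do assert about $\psi$ is the wrong one. The surjectivity you need is actually immediate from the construction of $\cB(A)$: its elements are generated under $+$, $-$ and the $*$'s by the classes $\bbb_j=(b_j\cdot,\cdot b_j)$ with $b_j\in B_j$, and for each such $B_j$ the actor's universal property gives $\vf_j\colon B_j\to\act(A)$ with $\vf_j(b_j)\cdot a=b_j\cdot a$ and $\vf_j(b_j)*a=b_j*a$; hence $\psi(\vf_j(b_j))$ and $\bbb_j$ define the same pair of maps on $A$ and are equal in $\cB(A)$. Since the image of the $\bC_G$-homomorphism $\psi$ is a subalgebra containing every generator, $\psi$ is onto. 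That alone finishes the proof: $\cB(A)$ is a homomorphic image of $\act(A)\in\bC$, so it satisfies all identities of $\bE$, and its action on $A$, being computed through preimages under $\psi$, satisfies the derived-action identities because the action of $\act(A)$ does; Theorem~\ref{derived_semi} then puts $\cB(A)\ltimes A$ in $\bC$, and the $(\Leftarrow)$ direction plus uniqueness of actors gives $\act(A)=\cB(A)$. By contrast, your claim that injectivity of $\psi$ ``comes from the strictness built into $\cB(A)$'' does not go through: strictness says $\psi$ is injective exactly when $\act(A)$ acts faithfully on $A$, which is not known a priori from Definition~\ref{actor}. Fortunately injectivity is not needed anywhere; it follows a posteriori once both objects are known to be actors.
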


From Theorems \ref{derived_semi} and \ref{actor_semi} we have
\begin{coro} \label{actor_interest}An object $A$ of a category of interest $\bC$ has an
actor if and only if $\cB(A)\in \bC$ and the action of $\cB(A)$ on
$A$ is a derived action.
\end{coro}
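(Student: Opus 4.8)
The plan is to deduce the statement by simply combining Theorem \ref{derived_semi} and Theorem \ref{actor_semi}, applied to the pair $B=\cB(A)$ and $A$. The single point that requires care is that $\cB(A)$ is a priori only an object of $\bC_G$, whereas Theorem \ref{derived_semi} is formulated for objects of $\bC$; this is precisely why the statement separates the two hypotheses ``$\cB(A)\in\bC$'' and ``the action of $\cB(A)$ on $A$ is a derived action''. Recognizing that these two conditions together are exactly what is needed to chain the two theorems is the crux of the argument.

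For the necessity direction, I would suppose that $A$ has an actor. By Theorem \ref{actor_semi} the semidirect product $\cB(A)\ltimes A$ is then an object of $\bC$ and $\act(A)=\cB(A)$. Since by Definition \ref{actor} an actor is by construction an object of $\bC$, we obtain $\cB(A)=\act(A)\in\bC$, which establishes the first required condition. Having secured $\cB(A)\in\bC$, I may now legitimately invoke Theorem \ref{derived_semi} with $B=\cB(A)$: because $\cB(A)\ltimes A\in\bC$, the action of $\cB(A)$ on $A$ is a derived action, which is the second required condition.

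For the sufficiency direction, I would assume that $\cB(A)\in\bC$ and that the action of $\cB(A)$ on $A$ is a derived action. Now $\cB(A)$ and $A$ are both objects of $\bC$, so Theorem \ref{derived_semi} applies with $B=\cB(A)$ and yields $\cB(A)\ltimes A\in\bC$; Theorem \ref{actor_semi} then immediately gives that $A$ has an actor (indeed $\act(A)=\cB(A)$). The whole proof is purely formal once the two quoted theorems are available, so the main obstacle is conceptual rather than computational: one must observe that Theorem \ref{derived_semi} presupposes its first argument to lie in $\bC$, whence the hypothesis $\cB(A)\in\bC$ cannot be dropped and is exactly what makes the passage between the two theorems valid.
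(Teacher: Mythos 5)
Your proposal is correct and follows exactly the route the paper intends: the paper states this corollary with no written proof beyond ``From Theorems \ref{derived_semi} and \ref{actor_semi} we have'', and your argument is precisely the careful spelling-out of that combination, including the right observation about why the hypothesis $\cB(A)\in\bC$ is needed before Theorem \ref{derived_semi} can be invoked.
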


Recall that an \emph{alternative algebra} $A$ over a field $F$ is
an algebra which satisfies the identities $x^2y = x(xy)$ and $yx^2
= (yx)x,$ for all $x,y \in A$. These identities are known
respectively as the left and right alternative laws. We denote the
corresponding category of alternative algebras by $\alt$. Clearly
any associative algebra is alternative. The class of 8-dimensional
Cayley algebras is an important class of alternative algebras
which are not associative \cite{Sch}; commutative alternative
algebras are Jordan algebras.

Note that in all categories of algebras, considered in this paper
as categories of interest, algebras are generally without unit,
but, of course, algebras with unit are also included in these
categories. We will always note when we deal with algebras over a
field with characteristic 2, but nevertheless in such cases we
will use the sign ``$-$'' before the elements of the algebra, in
order to denote the additive inverse elements, e.g. $-a$, for the
element inverse to $a$ of the algebra $A$; this we do essentially
in order not to confuse a reader during the computations while we
apply different axioms or certain assumptions on algebras.

Here we introduce the notion of a general alternative algebra.

\begin{defi} A  \emph{general alternative algebra} (shortly
\emph{g-alternative algebra}) $A$ over a field $F$ is an algebra,
which satisfies the following two axioms for any $x, y, z \in A$
\begin{enumerate}[\hspace*{0.5cm} {A}x{i}om~${2}_1$.]
\item $ x(yz) = (xy)z + (yx)z - y(xz)$;
\item $(xy)z = x(yz) + x(zy) - (xz)y $.
\end{enumerate}
\end{defi}

These axioms are dual to each other in the sense, that if $x \circ
y=yx$, then Axiom~$2_1$ for the operation $\circ$ gives
Axiom~$2_2$ for the original operation, and obviously, Axiom~$2_2$
for the $\circ$ operation gives Axiom~$2_1$. We consider these
conditions as Axiom~2 and consequently, the category of
g-alternative algebras can be interpreted as a category of
interest, which will be denoted by $\galt$. According to the
definition of a general category of groups with operations for a
given category of interest, we obtain that $\alt_G=\galt_G$ and it
is a category of groups together with the multiplication
operation, which satisfies the identity $x(y+z)=xy+xz$.

Denote by $\overline{\galt}$ the full subcategory of
$\galt$ of those objects and homomorphisms between them,
which satisfy the following condition

E$_1$. \quad $(xy)x=x(yx)$, for any $x,y \in A$.

This identity is called the \emph{flexible law} \cite{Sch} or
\emph{flexible identity} \cite{ZSSS}.

\begin{Prop}\label{galt}
\begin{enumerate}
\item[(i)] For any field F, we have the equality $\alt=
\overline{\galt}$;

\item[(ii)] If $\ch F \neq 2$, then
$\galt=\overline{\galt}=\alt$.
\end{enumerate}
\end{Prop}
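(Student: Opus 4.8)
The plan is to recast all four identities in terms of the associator $(x,y,z):=(xy)z-x(yz)$, which is $F$-trilinear because multiplication in any object of $\galt$ is $F$-bilinear (this is exactly what conditions (c) and (d) guarantee). Rearranging each defining identity then gives the dictionary: Axiom~$2_1$ is equivalent to the antisymmetry $(x,y,z)+(y,x,z)=0$ in the first two arguments; Axiom~$2_2$ is equivalent to the antisymmetry $(x,y,z)+(x,z,y)=0$ in the last two; the flexible identity E$_1$ reads $(x,y,x)=0$; and the left and right alternative laws read $(x,x,y)=0$ and $(y,x,x)=0$ respectively. Each of these equivalences is an immediate one-line rearrangement.

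For part (i), to establish $\alt\subseteq\overline{\galt}$ I would start from an alternative algebra and linearize: substituting $x\mapsto x+z$ in $(x,x,y)=0$ and cancelling the pure terms by trilinearity yields $(x,z,y)+(z,x,y)=0$, which is Axiom~$2_1$; symmetrically, linearizing $(y,x,x)=0$ yields Axiom~$2_2$. Flexibility then follows, for instance from Axiom~$2_1$ together with the right alternative law, since $(x,y,x)=-(y,x,x)=0$. For the reverse inclusion $\overline{\galt}\subseteq\alt$ I would run this observation backwards: Axiom~$2_2$ gives $(x,x,y)=-(x,y,x)$, which vanishes by E$_1$, so the left alternative law holds; and Axiom~$2_1$ gives $(y,x,x)=-(x,y,x)=0$, the right alternative law. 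As both are full subcategories of $\galt$ with algebra homomorphisms as morphisms, equality of the object classes yields $\alt=\overline{\galt}$.

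For part (ii) it remains to show that over a field with $\ch F\neq2$ every g-alternative algebra already satisfies E$_1$. Here I would not even need the flexible law as an intermediary: putting $a=b=x$, $c=y$ in Axiom~$2_1$ gives $2(x,x,y)=0$, and putting $a=y$, $b=c=x$ in Axiom~$2_2$ gives $2(y,x,x)=0$; dividing by $2$ recovers both alternative laws, so $\galt\subseteq\alt$. Combined with part (i) this produces the chain $\galt\subseteq\alt=\overline{\galt}\subseteq\galt$, whence $\galt=\overline{\galt}=\alt$.

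The one genuinely delicate point is the behaviour in characteristic~$2$, around which the whole statement is built. The implications used in part (i)---linearizing an alternative law into an antisymmetry, and combining an antisymmetry with E$_1$ to recover an alternative law---never divide by~$2$, which is precisely why (i) holds over an arbitrary field. By contrast, recovering the alternative laws (or E$_1$ itself) from Axioms~$2_1$ and $2_2$ \emph{alone} genuinely requires $2$ to be invertible, since in characteristic~$2$ antisymmetry no longer forces the associator to vanish on a repeated argument; this is exactly why E$_1$ must be imposed separately to cut $\overline{\galt}$ out of $\galt$. So the main care required is bookkeeping which implications are division-free, rather than any single hard computation.
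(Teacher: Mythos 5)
Your proposal is correct and is essentially the paper's own argument recast in associator notation: your linearization of $(x,x,y)=0$ is the paper's substitution $(x+y)^2z=(x+y)((x+y)z)$, and your specializations recovering the alternative laws from the axioms plus E$_1$ are the paper's $y=z$ and $x=y$ substitutions. The only (cosmetic) divergence is in part (ii), where the paper first recovers the flexible law E$_1$ from $2y(xy)=2(yx)y$ and then invokes part (i), whereas you recover the two alternative laws directly from $2(x,x,y)=0$ and $2(y,x,x)=0$; both are one-line computations and both correctly isolate division by $2$ as the only step that fails in characteristic $2$.
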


\begin{proof} (i) Let A be an alternative algebra, then we have
\[(x+y)^2z=(x+y)((x+y)z), \ \mbox{for any}\ x,y,z\in A \, ,\]
from which follows Axiom~$2_1$. Analogously, the identity
$x(y+z)^2= (x(y+z))(y+z)$ gives Axiom~$2_2$. Thus we have
$\alt \subseteq \galt$. It is well-known fact that
every alternative algebra satisfies the condition E$_1$;
see the equivalent definition of alternative algebras and the
proof of Artin's theorem, e.g. in \cite{Kur}. We include here the
proof based on the fact that alternative algebras are
g-alternative algebras. From Axiom~$2_1$, for $y=z$ we have

\begin{equation} \label{25}
xy^2=(xy)y+(yx)y-y(xy)
\end{equation}
and since $xy^2=(xy)y$, we obtain  $(yx)y=y(xy)$, which is the
condition E$_1$. Now we shall show that $\overline{\galt}\subseteq
\alt$. Let $A\in \overline{\galt}$, again from \eqref{25},
applying the condition E$_1$ we obtain $xy^2=(xy)y$. Analogously,
from Axiom~$2_2$ for $x=y$ we have $x^2z=x(xz)+x(zx)-(xz)x$; this
by E$_1$ implies $x^2z=x(xz)$, which ends the proof of ($i$).

(ii) We have only to show that if $\ch F \neq 2$, then every
g-alternative algebra satisfies the condition E$_1$. From
Axiom~$2_2$ for $y=z$, we have $(xy)y=xy^2+xy^2-(xy)y$. From this
and \eqref{25} we have $2y(xy)=2(yx)y$, applying the fact that
$\ch F\neq2$ we obtain the condition E$_1$, which ends the proof.
\end{proof}
The following example shows that in the case $\ch F=2$ we have the
strict inclusion $\alt \subset \galt$.

\begin{ex} Let $A$ be the free g-alternative algebra  generated by the one element set $\{x\}$ over a field
$F$, and $\ch F=2$. Then, according to Axiom~$2_1$, since
$(xx)x+(xx)x=0$, we only obtain that $x(xx)=-x(xx)$, as it is for
any element $a\in A$, i.e. $a=-a$, since $\ch F=2$. Analogously,
from Axiom~$2_2$, we will have $(xx)x=-(xx)x$, which shows that
$A$ is not an alternative algebra.
\end{ex}

\section{Derived actions and universal acting objects in the
categories of g-alternative and alternative algebras} \label{acting}

According to the definition of derived action in a category of
interest, in the category of g-alternative algebras over a field
$F$ we obtain the following: a derived action of $B$ on $A$ in
$\galt$ is a pair of $F$-bilinear maps \eqref{bilinear}, which we
denote by $(b,a)\mapsto b a$, $(a,b)\mapsto a b$ with the
conditions
\begin{align*}
\mbox{I}_1. \quad   b (a_1 a_2)& = (b a_1) a_2 + (a_1 b)a_2 - a_1(b a_2), \\
\mbox{I}_2. \quad (a_1 a_2) b & = a_1 (a_2 b) + a_1 (b a_2) - (a_1 b) a_2 , \\
\mbox{I}_3. \quad (b a_1) a_2 & = b (a_1 a_2) + b (a_2 a_1) - (b a_2) a_1 , \\
\mbox{I}_4. \quad a_1(a_2 b)&=(a_1 a_2)b + (a_2 a_1) b - a_2(a_1 b),\\
\mbox{II}_1. \quad (b_1 b_2) a & = b_1 (b_2 a) + b_1 (a b_2) - (b_1 a) b_2 , \\
\mbox{II}_2. \quad a (b_1 b_2) &= (a b_1) b_2 + (b_1 a) b_2 - b_1(a b_2), \\
\mbox{II}_3. \quad (a b_1) b_2& = a(b_1 b_2) + a(b_2 b_1) - (a b_2) b_1 , \\
\mbox{II}_4. \quad b_1(b_2a)&=(b_1 b_2) a + (b_2 b_1) a - b_2(b_1 a),
\end{align*}
for any $a_1, a_2\in A$ and $b_1, b_2\in B$. These identities are
obtained according to \eqref{derived_star} and Axiom~2 for
g-alternative algebras.

For the case of alternative algebras for derived actions we will
have $\mbox{I}_1-\mbox{I}_4, \mbox{II}_1-\mbox{II}_4$, and two more identities obtained from
the condition E$_1$:
\begin{align*}
& \mbox{III}_1. \quad  a(ba)=(ab)a, \\
& \mbox{III}_2. \quad  b(ab)=(ba)b,
\end{align*}
for any $a\in A, b\in B.$ In the case where $A$ is a vector space
over a field $F$ we obtain the well-known definition of an
alternative bimodule or, equivalently, of a representation of an
alternative algebra \cite{Sch}.

From Axiom~2 we easily deduce the following useful identities in
terms of Axiom~2 of the corresponding category of interest, which
express the well-known properties of alternative algebras (see
e.g. \cite{Sch}, \cite{ZSSS}). We shall use the notation like
$(1\leftrightarrow 2,-)$ in order to denote that right side of the
identity is obtained by the permutation of the first and the second
elements from the left side, and the signs of the summands are
changed; the notation like $(1\rightarrow 3\rightarrow
2\rightarrow 1)$ denotes that the right side of the identity is
obtained  by the following changes in the left side: the first
element takes the place of the third one, the third element the
place of the second one and the second element takes the place of
the first one.

\begin{align}
(1\leftrightarrow 2,-) \qquad \qquad \qquad (xy)z-x(yz)=-(yx)z+y(xz)  \label{31}\\
(1\leftrightarrow 3,-) \qquad \qquad \qquad (xy)z-x(yz)=-(zy)x+z(yx) \label{32}\\
(2\leftrightarrow
3,-) \qquad \qquad \qquad (xy)z-x(yz)=-(xz)y+x(zy) \label{33}\\
(1\rightarrow3\rightarrow2\rightarrow
1)\qquad \qquad \qquad (xy)z-x(yz)=(yz)x-y(zx) \label{34}\\
(1\rightarrow 2 \rightarrow 3\rightarrow 1)\qquad  \qquad \qquad (xy)z-x(yz)=(zx)y-z(xy) \label{35}\\
 (xy)z+z(xy)=x(yz)+(zx)y \label{36}\\
 (xy)z+(yx)z=x(yz)+y(xz) \label{37}\\
 z(xy)+z(yx)=(zx)y+(zy)x \label{38}
\end{align}
for any $x,y,z\in A$. Note that we have \eqref{31}
$\Leftrightarrow$ Axiom~$2_1$, \eqref{33} $\Leftrightarrow$
Axiom~$ 2_1$, and \eqref{34} from right to the left is the same as
\eqref{35}. In the case $\ch F\neq 2$, \eqref{32} is equivalent
to the flexible law E$_1$.

Let $\{B_i\}_{i\in I}$ be the set of all g-alternative algebras,
which have a derived action on $A$ in $\galt$. From the properties
$\mbox{I}_1-\mbox{I}_4$ it follows that identities
\eqref{31}--\eqref{38} are true for any $x,y,z\in A$, where one of
the elements from $x,y,z$ is in $B_i$ for any $i\in I$.

Let $A$ and $B$ be  g-alternative (resp. alternative) algebras and
$B$ has a derived action on $A$ in $\galt$ (resp. $\alt$).
According to the general definition of a semidirect product in a
category of interest given in Section \ref{def}, $B\ltimes A$ is a
g-alternative (resp. an alternative) algebra, whose underlying set
is $B\times A$ and whose operations are given by
\begin{align*}
(b',a')+(b,a)&=(b'+b,a'+ a),\\
(b',a')(b,a)&=(b'b,a'a+a'b+b'a).
\end{align*}
According to Definition \ref{actor}, for any g-alternative (resp.
alternative) algebra $A$, an \emph{actor} of $A$ is an object
$\act(A)\in \galt$ (resp. $\act(A)\in \alt$),
which has a derived action on $A$ in $\galt$ (resp.
$\alt$) and for any g-alternative (resp. alternative)
algebra $C$ and a derived action of $C$ on $A$, there is a unique
homomorphism $\vf \colon C\lra \act(A)$ with $c a=\vf(c) a$, for
any $a\in A$ and $c\in C$.

Here we give a construction of a universal strict general actor
$\cB(A)$ of a g-alternative algebra $A$, which is a special case
of the construction given in \cite{CDL2} for categories of
interest. In this case we have only two binary operations: the
addition, denoted by `` + '', and the multiplication, which will
be denoted here by dot ``$\cdot$''. Note that this sign was
omitted usually in above expressions for g-alternative algebras,
and it will be so in the next sections as well, when there is no
confusion. Since the addition is commutative, the action
corresponding to this operation is trivial. Thus we will deal only
with actions, which are defined by multiplication according to
\eqref{derived_star}, and this action will be denoted by $\cdot$
as well
\begin{align*}
b\cdot a&=s(b)\cdot a.
\end{align*}
Consider all split extensions of $A$ in $\galt$
\[  \xymatrix{E_j \colon 0\ar[r]&A\ar[r]^-{i_j}&C_j\ar[r]^-{p_j}&B_j\ar[r]&0},\quad j\in\bJ \, .  \]
Note that it may happen that $B_j=B_k=B$, for $j\neq k$, in this
case the corresponding extensions derive different actions of $B$ on
$A$.
 Let $\{(b_j\cdot, \cdot b_j)|b_j\in
B_j\}$ be the set of all pairs of $F$-linear maps $A\rightarrow
A$, defined by the action of $B_j$ on $A$. For any element $b_j\in
B_j$ denote ${\bbb}_j=(b_j\cdot,\cdot b_j)$. Let
$\bB=\{{\bbb}_j|b_j\in B_j,\;j\in\bJ\}$.

According to Axiom~2 from the definition of $\galt$ as a category
of interest, we define the multiplication,
${\bbb}_i\cdot{\bbb}_k$, for the elements of $\bB$ by the
equalities
\begin{align*}
&({\bbb}_i\cdot{\bbb}_k)\cdot(a)=b_i\cdot(b_k\cdot
a)+b_i\cdot(a\cdot b_k)-(b_i\cdot a)\cdot b_k \, ,\\
&(a)\cdot({\bbb}_i\cdot{\bbb}_k)=(a\cdot b_i)\cdot b_k+(b_i\cdot
a)\cdot b_k-b_i\cdot(a\cdot b_k) \, .
\end{align*}
For $b=\bbb_{i_1}\cdot \ldots \cdot\bbb_{i_n}$ with certain brackets,
$b\cdot a$ is defined in a natural way step by step according to
brackets and to above defined equalities.

 We define the operation of addition by
\begin{align*}
&({\bbb}_i+{\bbb}_k)\cdot(a)=b_i\cdot a+b_k\cdot a.
\end{align*}
For the unary operation ``-'' we define
\begin{align*}
&(-{\bbb}_k)\cdot(a)=-(b_k\cdot a),\\
&(-b)\cdot(a)=-(b\cdot(a)),\\
&- (b_1+\dots +b_n)=-b_n-\dots-b_1,
\end{align*}
where $b,b_1,\dots,b_n$ are certain combinations of the dot
operation on the elements of $\bB$, i.e. the elements of the type
$\bbb_{i_1}\cdot \ldots \cdot\bbb_{i_n}$, where we mean certain
brackets, $n>1$. Obviously, the addition is commutative.

Denote by $\cB'(A)$ the set of all pairs of $F$-linear maps
obtained by performing all kinds of above defined operations on
the elements of $\bB$. We define the following relation: we will
write $b\sim b'$, for $b,b'\in \cB'(A),$ if and only if
$b\cdot(a)=b'\cdot(a)$, for any $a\in A$. This relation is a
congruence relation on $\cB'(A)$, i.e. it is compatible with the
operations we have defined in $\cB'(A)$. We define
$\cB(A)=\cB'(A)/\sim$. The operations defined on $\cB'(A)$ induce
the corresponding operations on $\cB(A)$. For simplicity we will
denote the elements of $\cB(A)$ by the same letters $b, b'$, etc.
instead of the classes $clb, clb'$, etc.

According to the general result \cite[Proposition 4.1]{CDL2}
$\cB(A)$ is an object of $\galt_G$; moreover, it is obvious, that
$\cB(A)$ is an $F$-algebra.

 Define the set of actions of $\cB(A)$ on $A$ in a natural way:
for $b\in \cB(A)$ we define $b\cdot a=b\cdot(a)$. Thus if
$b=\bbb_{i_1}\cdot_1 \ldots \cdot_{n-1}\bbb_{i_n}$, where we mean
certain brackets, we have
\begin{align*}
&b\cdot\,a=(\bbb_{i_1}\cdot_1 \ldots \cdot_{n-1}\bbb_{i_n})\,\cdot\,(a),\\
\end{align*}
where the right side of the equality is defined according to the
brackets and Axiom~2. For $b_k\in B_k$, $k\in\bJ$, we have
\begin{align*}
\bbb_k\cdot a=\bbb_k\cdot(a)=b_k\cdot a.
\end{align*}
Also
\[(b_1+b_2+ \dots +b_n)\cdot
a=b_1\cdot(a)+ \dots +b_n\cdot(a), \; \mbox{for} \; b_i\in \cB(A),\;
i=1, \ldots, n \, .\]

It is easy to check (see  \cite[Proposition 4.2]{CDL2}  for the
general case) that the set of actions of $\cB(A)$ on $A$ is a set
of derived actions in $\galt_G$.

The construction of a universal strict general actor of an
alternative algebra $A$ in $\alt$ is analogous to the one of
$\cB(A)$; in this case we consider all split extensions $E_j$ of
$A$ in $\alt$, i.e we will deal with the family $\{B_i\}_{i\in I}$
of all alternative algebras, which have a derived action on $A$ in
$\alt$. The corresponding object will be denoted by
$\cB_{\alt}(A)$, and it is an object of $\alt_G$.

For any $A\in \galt$, define the map $d \colon A\lra \cB(A)$ by
$d(a)=\aaa$, where $\aaa=\{(a \cdot ,\cdot a)\}$. Thus we have by
definition
\begin{equation*}
d(a)\cdot a'=a\cdot a', \quad  a'\cdot d(a)=a'\cdot a, \ \mbox{for any} \
a,a'\in A.
\end{equation*}\
According to the general results (see the case of a category of
interest, \cite[Lemma 4.5 and Proposition 4.6]{CDL2}) $d$ is a
homomorphism in $\galt_G$ and moreover $d \colon A\lra \cB(A)$ is a
crossed module in $\galt_G$ with certain universal property.

From the general definition of a center in categories of interest
\cite{CDL2}, (cf. \cite{Orz}), for any g-alternative algebra $A$,
we obtain that the \emph{center} of $A$ is defined by
\[Z(A)=\Ker d=\{z\in A \mid \mbox{for all}\ a\in A,
 az=za=0\} \, .\]
Therefore the center in this case is a left-right annulator of $A$,
 denoted by $\ann(A)$ (see \cite{Mc} for the case of rings).

\section{$\soci(A)$, $\asoci(A)$ and their properties}\label{properties}

 For any $x, y, z \in A\bigcup\cB(A)$, $A\in \alt$, consider
 the following types of elements:

 \begin{itemize}

 \item[(sac)]  $x(yz)+x(zy), \, (yz)x+(zy)x$ ;

  \item[(as)]  $x(yz)-(xy)z$;

  \item[(aas)] $x(yz)+(xy)z$;

  \item[(ap)]  $x(yz)+y(xz)$, \, $(yz)x+(yx)z$.
 \end{itemize}
 Let ${\{B_i\}}_{i\in I}$ be the family of g-alternative algebras which have a
 derived action on $A$. According to the above notation we define
 the following sets in $A$.

  \begin{itemize}
  \item $S_1^{\rm sac}$  (resp.  $S_2^{\rm sac}$)\,: the set of elements of $A$
 of the type (sac), where one element (resp. two elements) from the triple
 $(x,y,z)$ is (resp. are) in $\bigcup_{i\in I}{B_i}$.

\item $\bar{S}_1^{\rm sac}$  (resp.  $\bar{S}_2^{\rm sac}$)\,: the set of elements of $A$
 of the type (sac), where one element (resp. two elements)  from the triple
 $(x,y,z)$ is (resp. are) in $\cB(A)$.

 \item $S_1^{\rm as}$  (resp.  $S_2^{\rm as}$)\,: the set of elements of $A$
 of the type (as), where one element (resp. two elements) from the triple
 $(x,y,z)$ is (resp. are) in $\bigcup_{i\in I}{B_i}$.

\item $\bar{S}_1^{\rm as}$  (resp.  $\bar{S}_2^{\rm as}$)\,: the set of elements of $A$
 of the type (as), where one element (resp. two elements)  from the triple
 $(x,y,z)$ is (resp. are) in $\cB(A)$.

 \item $S_1^{\rm aas}$  (resp.  $S_2^{\rm aas}$)\,: the set of elements of $A$
 of the type (aas), where one element (resp. two elements) from the triple
 $(x,y,z)$ is (resp. are) in $\bigcup_{i\in I}{B_i}$.

 \item $\bar{S}_1^{\rm aas}$  (resp.  $\bar{S}_2^{\rm aas}$)\,: the set of elements of $A$
 of the type (aas), where one element (resp. two elements)  from the triple
 $(x,y,z)$ is (resp. are) in $\cB(A)$.

\item $S_1^{\rm ap}$  (resp.  $S_2^{\rm ap}$)\,: the set of elements of $A$
 of the type (ap), where one element (resp. two elements) from the triple
 $(x,y,z)$ is (resp. are) in $\bigcup_{i\in I}{B_i}$.

 \item $\bar{S}_1^{\rm ap}$  (resp.  $\bar{S}_2^{\rm ap}$)\,: the set of elements of $A$
 of the type (ap), where one element (resp. two elements)  from the triple
 $(x,y,z)$ is (resp. are) in $\cB(A)$.
  \end{itemize}

 \begin{defi} For any $A\in \galt$ define $\soci(A)$ as
 the $\cB(A)$-substructure of $A$ generated by the set $S_1^{\rm sac}$.
\end{defi}
 \begin{defi}
 For any $n\geq1$ denote $\asoci^n(A)=\{x\in
 A|(\dots((xa_1)a_2)\dots a_n)\in \soci(A),\, \text{for any} \,\, a_1,a_2,\dots,a_n\in A\}$. \
Define $\asoci(A) = \bigcup _{n\geq 1}\asoci^n(A)$.
 \end{defi}

 From the definition it follows that $\soci(A)\subseteq
 \asoci^1(A)$ and $\asoci^n(A)\subseteq
 \asoci^{n+1}(A)$  for any $n$.

 \begin{Lem}\label{substructure}
 Let $A'$ be a $B_i$-substructure of $A$ for any $i\in I$. Then $A'$
 is a $\cB(A)$-substructure of $A$ in $\galt_G$.
 \end{Lem}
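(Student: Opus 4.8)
The plan is to reduce everything to the way elements of $\cB(A)$ are built and to run an induction on the construction of elements of $\cB'(A)$. Recall that $A'$ is by hypothesis a subobject of $A$, hence closed under $+$ and the unary minus, and that $\cB'(A)$ is, by its very definition, the closure of the generating set $\bB=\{\bbb_j\}$ under addition, minus and the dot multiplication. I would introduce the set
\[
P=\{\,b\in\cB'(A)\mid b\cdot a'\in A' \ \text{and}\ a'\cdot b\in A' \ \text{for all}\ a'\in A'\,\},
\]
so that the assertion at the level of $\cB'(A)$ is exactly $P=\cB'(A)$. Since $\cB'(A)$ is the smallest set of pairs of $F$-linear maps that is closed under the three operations and contains $\bB$, it suffices to check that $\bB\subseteq P$ and that $P$ is closed under addition, minus and multiplication.

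For the base case take $b=\bbb_j$ with $b_j\in B_j$. By the definition of the generators, $\bbb_j\cdot a'=b_j\cdot a'$ and $a'\cdot\bbb_j=a'\cdot b_j$, and both lie in $A'$ precisely because $A'$ is a $B_j$-substructure of $A$. Closure of $P$ under addition and minus is then immediate from the defining equalities $(b_1+b_2)\cdot a'=b_1\cdot a'+b_2\cdot a'$ and $(-b)\cdot a'=-(b\cdot a')$, together with their right-action analogues and the fact that $A'$ is a subgroup of $A$.

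The crucial step is closure under multiplication. Given $b^{(1)},b^{(2)}\in P$, I would expand the action of the product by the defining formula
\[
(b^{(1)}\cdot b^{(2)})\cdot a'=b^{(1)}\cdot(b^{(2)}\cdot a')+b^{(1)}\cdot(a'\cdot b^{(2)})-(b^{(1)}\cdot a')\cdot b^{(2)},
\]
together with its right-action counterpart. Reading each summand from the inside out, every intermediate element stays in $A'$: both $b^{(2)}\cdot a'$ and $a'\cdot b^{(2)}$ lie in $A'$ since $b^{(2)}\in P$, applying $b^{(1)}$ on the left keeps them in $A'$ since $b^{(1)}\in P$, and $(b^{(1)}\cdot a')\cdot b^{(2)}$ lies in $A'$ because $b^{(1)}\cdot a'\in A'$ and then the right action of $b^{(2)}$ preserves $A'$; the whole expression is finally a sum and difference of elements of $A'$. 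Hence $b^{(1)}\cdot b^{(2)}\in P$. (No bracketing of longer products needs separate treatment: the outermost dot of any bracketed monomial is a binary product of two members of $\cB'(A)$, so this single closure property already covers all cases.) Thus $P=\cB'(A)$.

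Finally I would descend to $\cB(A)=\cB'(A)/{\sim}$. Since the action of a class on an element of $A$ is computed through any representative, and we have just shown that every representative carries $A'$ into $A'$ on both sides, the left and right derived actions of $\cB(A)$ (well defined by the construction of \cite{CDL2}) map $A'$ into $A'$. Combined with $A'$ being a subobject of $A$, this is exactly the statement that $A'$ is a $\cB(A)$-substructure of $A$ in $\galt_G$. The one point demanding care is the multiplication step: one must propagate the \emph{left} and the \emph{right} preservation of $A'$ simultaneously, since the product formula mixes left and right actions of the two factors, and tracking only one of them would break the induction.
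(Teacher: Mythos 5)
Your proposal is correct and follows essentially the same route as the paper: the paper's proof also expands $(b_ib_j)x=b_i(b_jx)+b_i(xb_j)-(b_ix)b_j$, observes each summand lies in $A'$ by hypothesis, and concludes by induction on the construction of elements of $\cB(A)$. Your write-up is merely more explicit about the base case, closure under $+$ and $-$, the simultaneous tracking of left and right actions, and the passage to the quotient $\cB'(A)/{\sim}$, all of which the paper leaves implicit.
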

 \begin{proof} Let $b=b_ib_j$ be an element of $\cB(A)$. For any $x\in A'$ we have
\[(b_ib_j)x=b_i(b_jx)+b_i(xb_j)-(b_ix)b_j \, .\]
By the condition of the lemma, every element on the right side is an
 element of $A'$, therefore $(b_ib_j)x\in A'$. By induction, in
 analogous way, it can be proved that $bx\in A'$ for any $b\in \cB(A)$.
 \end{proof}

 \begin{Lem}\label{asoci}
 $\asoci(A)$ is a two sided ideal of $A$.
\end{Lem}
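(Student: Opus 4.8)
The plan is to verify the three defining closure properties of a two-sided ideal for $\asoci(A)=\bigcup_{n\ge1}\asoci^n(A)$: closure under addition and additive inverses, and closure under left and under right multiplication by arbitrary elements of $A$. The decisive preliminary observation, on which everything rests, is that $\soci(A)$ is itself a two-sided ideal of $A$. Indeed, $\soci(A)$ is by definition a $\cB(A)$-substructure, hence closed under the derived actions of $\cB(A)$; applying this to the elements $d(a)\in\cB(A)$ and using $d(a)\cdot s=as$ and $s\cdot d(a)=sa$ for $s\in\soci(A)$, $a\in A$, we obtain $as,sa\in\soci(A)$. Thus $\soci(A)$ absorbs multiplication by $A$ on both sides.

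With this in hand the additive-subgroup property is routine, since the multiplication is $F$-bilinear: $(\dots((x+y)a_1)\dots a_n)=(\dots((xa_1)\dots a_n)+(\dots((ya_1)\dots a_n)$, and as the chain $\asoci^1(A)\subseteq\asoci^2(A)\subseteq\cdots$ is increasing and $\soci(A)$ is an additive subgroup, any two elements of $\asoci(A)$ lie in a common $\asoci^n(A)$, where their sum and negatives remain. Right multiplication is equally direct: if $x\in\asoci^n(A)$ then for any $a\in A$ the product $(\dots(((xa)a_1)\dots a_{n-1})$ is a right chain of $x$ with the $n$ factors $a,a_1,\dots,a_{n-1}$, hence lies in $\soci(A)$; so $xa\in\asoci^{n-1}(A)\subseteq\asoci(A)$ (reading $\asoci^0(A)=\soci(A)$ for the edge case $n=1$).

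The heart of the proof, and the step I expect to be the main obstacle, is closure under left multiplication: $a\in A$, $x\in\asoci(A)\Rightarrow ax\in\asoci(A)$. The tool is the alternating character of the associator $(u,v,w)=(uv)w-u(vw)$, which holds in $\galt$ because identities \eqref{31} and \eqref{33} say precisely that $(u,v,w)$ is antisymmetric in its first two and in its last two arguments. Rewriting $(a,x,a_1)=-(x,a,a_1)$ yields
\[
(ax)a_1=a(xa_1)-(xa)a_1+x(aa_1),
\]
valid for all $a,a_1\in A$. I would then prove by induction on $n$ the statement $a\cdot\asoci^n(A)\subseteq\asoci^n(A)$ for every $a\in A$. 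In the base case $n=1$ each of the three summands already lies in $\soci(A)$, since $\soci(A)$ is an ideal and $xa_1,xa,x(aa_1)\in\soci(A)$; hence $(ax)a_1\in\soci(A)$ and $ax\in\asoci^1(A)$. For the inductive step, given $x\in\asoci^n(A)$ one checks term by term that $a(xa_1)\in\asoci^{n-1}(A)$ (by the induction hypothesis, as $xa_1\in\asoci^{n-1}(A)$), that $(xa)a_1\in\asoci^{n-2}(A)\subseteq\asoci^{n-1}(A)$, and that $x(aa_1)\in\asoci^{n-1}(A)$; therefore $(ax)a_1\in\asoci^{n-1}(A)$, and right-multiplying by $n-1$ further elements lands in $\soci(A)$, giving $ax\in\asoci^n(A)$.

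The genuine difficulty is exactly this inductive step: one must use the associator identity that pushes the extra left factor $a$ onto an element already lying one level deeper in the filtration, so that the induction closes without the level growing. Once all three closure properties are established, $\asoci(A)$ is a two-sided ideal of $A$. As an alternative organization, since $\soci(A)$ is a two-sided ideal one may pass to the quotient $A/\soci(A)\in\galt$ and show that the image of $\asoci(A)$ — the set of elements all of whose sufficiently long right products vanish — is an ideal; this streamlines the bookkeeping, while the associator computation remains identical.
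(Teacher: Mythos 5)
Your proof is correct and follows essentially the same route as the paper's: both handle right multiplication by observing it just lowers the filtration level, and both handle left multiplication by trading $(ax)a_1$ for $(xa)a_1$ via the g-alternative identity $(ax)a_1+(xa)a_1=a(xa_1)+x(aa_1)$ (identity \eqref{37}), using that $\soci(A)$ is a two-sided ideal. Your explicit induction on $n$ is in fact a sharpening of the paper's one-line step ``$(ax)a_1\simeq(-xa)a_1$'', which tacitly needs the error terms $a(xa_1)$ and $x(aa_1)$ to lie in $\asoci(A)$ --- for $a(xa_1)$ this is precisely the left-multiplication closure being proved, and your induction is what renders that step non-circular.
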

\begin{proof} It is obvious that if $x\in \asoci(A)$, then
$xa\in \asoci(A)$ for any $a\in A$. Now we shall show that $ax\in
\asoci(A)$. Since $x\in \asoci(A)$, there exists a natural number
$k$, such that  for any $a_1,a_2,\dots,a_k \in A$, $(\dots
((-xa_1)a_2)\dots )a_k\in \soci(A)$; in particular, $(\dots
((-xa)a_1)\dots )a_{k-1}\in \soci(A)$. We have $(ax)a_1\simeq
(-xa)a_1$. By the definition $\soci(A)$ is an ideal of $A$,
therefore $(\dots ((-xa)a_1)\dots)a_{k-1}\simeq
(\dots((ax)a_1)\dots)a_{k-1}$. From this it follows that
$(\dots((ax)a_1)\dots)a_{k-1}\in \soci(A)$, which ends the proof.
\end{proof}

 \begin{Prop}\label{asoci_sub} For any $A\in \galt$, $\asoci(A)$ is a
 $\cB(A)$-substructure of $A$.
\end{Prop}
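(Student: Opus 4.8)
The plan is to reduce the statement to Lemma~\ref{substructure} and then to establish closure under the basic actions by an induction on the length $n$.

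First I would observe that $\asoci(A)$ is already an additive subgroup of $A$: each $\asoci^n(A)$ is an $F$-subspace, because multiplication is $F$-bilinear and $\soci(A)$ is a subspace, and these subspaces form an increasing chain (as remarked after the definition of $\asoci(A)$), so their union is a subspace — indeed a two-sided ideal by Lemma~\ref{asoci}. By Lemma~\ref{substructure} it therefore suffices to prove that $\asoci(A)$ is a $B_i$-substructure of $A$ for every $i\in I$, that is, that $xb_i\in\asoci(A)$ and $b_ix\in\asoci(A)$ whenever $x\in\asoci(A)$ and $b_i\in B_i$.

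The engine of the argument is the recursive description $y\in\asoci^n(A)\iff ya\in\asoci^{n-1}(A)$ for all $a\in A$ (reading $\asoci^0(A)=\soci(A)$), which is immediate from the definition by peeling off the first factor. Using it I would prove by a simultaneous induction on $n$ the two statements: if $x\in\asoci^n(A)$, then $xb_i\in\asoci^n(A)$ and $b_ix\in\asoci^n(A)$. For the right action I expand $(xb_i)a_1$ by the associator identity \eqref{33} applied to the triple $(x,b_i,a_1)$ — legitimate since one entry lies in $B_i$ — obtaining $(xb_i)a_1=x(b_ia_1)-(xa_1)b_i+x(a_1b_i)$. Here $b_ia_1$ and $a_1b_i$ lie in $A$, so the first and third terms are of the form $x\cdot(\text{element of }A)$ and hence lie in $\asoci^{n-1}(A)$, while the middle term lies in $\asoci^{n-1}(A)$ by the inductive hypothesis applied to $xa_1\in\asoci^{n-1}(A)$. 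Thus $(xb_i)a_1\in\asoci^{n-1}(A)$ for all $a_1$, whence $xb_i\in\asoci^n(A)$. For the left action I expand $(b_ix)a_1$ by \eqref{31} on the triple $(b_i,x,a_1)$, getting $(b_ix)a_1=b_i(xa_1)-(xb_i)a_1+x(b_ia_1)$, and treat the three terms using, respectively, the left-action inductive hypothesis on $xa_1\in\asoci^{n-1}(A)$, the right-action statement just proved at level $n$, and again $x\in\asoci^n(A)$. The base case $n=1$ is the same computation with $\asoci^0(A)=\soci(A)$, where the needed closures $b_i\,\soci(A)\subseteq\soci(A)$ and $\soci(A)\,b_i\subseteq\soci(A)$ hold because $\soci(A)$ is by definition a $\cB(A)$-substructure and the class of $\bbb_i$ acts on $A$ as $b_i$.

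The one point requiring care — and which I regard as the main obstacle — is the bookkeeping of the simultaneous induction: the left-action step at length $n$ relies on the right-action conclusion at the \emph{same} length $n$, so the two statements must be interleaved in the correct order (prove the right action at level $n$ first, from level $n-1$, then deduce the left action at level $n$). One must also verify at each stage that every auxiliary factor produced by the identities genuinely lies in $A$, so that the recursive description applies, and that the instances of \eqref{31} and \eqref{33} invoked are precisely those sanctioned by the remark that \eqref{31}--\eqref{38} remain valid when one of the three entries lies in some $B_i$.
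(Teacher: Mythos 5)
Your proof is correct and follows essentially the same route as the paper: reduce to closure under each $B_i$ via Lemma~\ref{substructure} (together with Lemma~\ref{asoci}), expand $(xb_i)a$ by Axiom~2 with one entry in $B_i$, and induct on the level $n$. The only cosmetic differences are that you treat the two terms $x(b_ia)$ and $x(ab_i)$ separately as elements of $\asoci^{n-1}(A)$ where the paper bundles them as an element of $S_1^{\rm sac}\subseteq\soci(A)$, and that you make explicit the interleaved induction for the left action, which the paper dismisses as ``analogous.''
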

\begin{proof} By Lemma \ref{substructure} we have only to prove that for any $b_i\in
B_i$, $i\in I$, if $x\in \asoci(A)$, then $xb_i$, $b_ix\in
\asoci(A)$. First consider the case, where $x\in
\asoci^1(A)$. For any $a\in A$ we have
\[(xb_i)a=x(b_ia)+x(ab_i)-(xa)b_i \, .\]
The sum of the first two summands on the right side is in
$\soci(A)$; since $xa_1\in \soci(A)$ and $\soci(A)$, by
definition, is a $\cB(A)$-substructure in $A$, the third summand
$(xa)b_i$ is in $\soci(A)$ as well. From this we conclude that
$(xb_i)a\in \soci(A)$, which means that $(xb_i)\in \asoci(A)$.

Consider the case where $x\in \asoci^2(A)$. We have
$(xa_1)a_2\in \soci(A)$, for any $a_1, a_2\in A$. We compute
\[((xb_i)a_1)a_2=(x(b_ia_1)+x(a_1b_i))a_2-((xa_1)b_i)a_2\, .\]
By Lemma \ref{asoci}, $xa_1\in \asoci^1(A)$, thus from the above
proved case we obtain $(xa_1)b_i\in \soci(A)$, and therefore
$((xa_1)b_i)a_2$ is in $\soci(A)$ as well. Moreover
$(x(b_ia_1)+x(a_1b_i))a_2\in \soci(A)$ since
$\soci(A)$ is an ideal of $A$. From this we conclude that
$((xb_i)a_1)a_2\in \soci(A)$. In this way by induction we
show that for any natural number $n$ and $x\in \asoci^n(A)$,
$xb_i\in \asoci(A)$. In analogous way is proved that
$b_ix\in \asoci(A)$, which ends the proof.
\end{proof}
\noindent
 \textbf{Notation.} For any set $X$ of elements of $A$, $A\in \galt$, we will
 write $X\simeq 0$ if and only if $X\subseteq \asoci(A)$.
 We will write $x_1\simeq x_2$ (resp.$x_1\sim x_2$), for $x_1, x_2 \in A$, if and only if
 $x_1-x_2 \in \asoci(A)$ (resp. $x_1-x_2 \in \soci(A)$).
 Therefore $x_1\sim x_2$ implies $x_1\simeq x_2$.

 By Proposition \ref{asoci_sub} and the definition of $\soci(A)$, \,$\simeq$ \, and \,$\sim$ \, are  congruence relations for the elements
 in $A$.
 \begin{Lem}\label{aas1}
\begin{enumerate}
\item[(i)] $S_1^{aas}\sim0$ and  $S_1^{as}\simeq0$.

\item[(ii)] If $A$ is an anticommutative g-alternative algebra, then $A$
is antiassociative and second level associative, i.e.
$((xy)z)a=(x(yz))a$, for any $a,x,y,z\in A$.

\item[(iii)] If $A$ is an anticommutative g-alternative algebra
and $\ann(A)=0$, then $A$ is antiassociative, associative and
$2x(yz)=2(xy)z=0$, for any $x,y,z\in A$.

\item[(iv)] If $A$ is anticommutative g-alternative algebra over a
field $F$ with $\ch F \neq 2$ and $\ann(A)=0$, then
$A=0$.
\end{enumerate}
\end{Lem}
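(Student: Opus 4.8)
The plan is to treat the four parts in order, each building on the previous one, and to reduce everything to Axiom~$2_1$, Axiom~$2_2$ and the derived identities \eqref{31}--\eqref{38}, using the fact (established just before the lemma) that these identities remain valid when one of the three entries lies in some $B_i$. For (i) I would argue directly from the definition of $\soci(A)$ as the $\cB(A)$-substructure generated by $S_1^{\mathrm{sac}}$. Writing an aas-element $x(yz)+(xy)z$ with one entry in $\bigcup_i B_i$, I would apply Axiom~$2_1$ or Axiom~$2_2$, according to the position of the $B_i$-entry, to rewrite it through the symmetrized expressions $x(yz)+x(zy)$ and $(yz)x+(zy)x$ of type (sac); since these generate $\soci(A)$ and $\soci(A)$ is closed under addition and the $\cB(A)$-action, this should give $S_1^{\mathrm{aas}}\subseteq\soci(A)$, i.e. $S_1^{\mathrm{aas}}\sim 0$. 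For the associator-type (as), the same manipulation does not land in $\soci(A)$ at the first level; instead I would multiply $x(yz)-(xy)z$ on the right by an arbitrary $a\in A$ and show, using \eqref{31}--\eqref{38} once more, that $(x(yz)-(xy)z)a$ is a sum of sac-type elements, hence lies in $\soci(A)$. By the definition of $\asoci^1(A)$ this places the as-element in $\asoci(A)$, i.e. $S_1^{\mathrm{as}}\simeq 0$.

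For (ii), assume $A$ is anticommutative. First I would derive antiassociativity $(xy)z=-x(yz)$: feeding $yx=-xy$ into \eqref{37} kills its left-hand side and yields $x(yz)+y(xz)=0$; then Axiom~$2_2$ gives $(xy)z=x(yz)+x(zy)-(xz)y$, where $x(zy)=-x(yz)$ cancels the first term, while $(xz)y=-y(xz)=x(yz)$, so $(xy)z=-(xz)y=-x(yz)$. Second-level associativity then follows formally from antiassociativity alone: applying the antiassociative law $(pq)r=-p(qr)$ twice shows that both $((xy)z)a$ and $(x(yz))a$ equal $x(y(za))$, whence $((xy)z)a=(x(yz))a$ for all $a,x,y,z$.

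For (iii) I would read second-level associativity as $[x,y,z]\cdot a=0$ for every $a$, where $[x,y,z]=(xy)z-x(yz)$; anticommutativity gives also $a\cdot[x,y,z]=-[x,y,z]\cdot a=0$, so the associator lies in $\ann(A)$. Under $\ann(A)=0$ this forces $[x,y,z]=0$ (associativity), and since antiassociativity gives $[x,y,z]=-2x(yz)$ it simultaneously yields $2x(yz)=2(xy)z=0$. For (iv), with $\ch F\neq 2$ the relation $2x(yz)=0$ becomes $x(yz)=0$; hence every product $w=yz\in A^2$ satisfies $xw=0$ and, by anticommutativity, $wx=0$ for all $x$, so $A^2\subseteq\ann(A)=0$. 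Then every element of $A$ annihilates $A$ on both sides, i.e. $A\subseteq\ann(A)=0$, whence $A=0$.

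I expect the main obstacle to be part (i): parts (ii)--(iv) are short once antiassociativity is in hand, whereas (i) requires the careful, essentially combinatorial, reduction of the aas- and as-type elements to the two sac-generators via \eqref{31}--\eqref{38}. The genuinely delicate point is to confirm that the aas-combination collapses all the way into $\soci(A)$ (so that one gets the stronger relation $\sim 0$), rather than merely into $\asoci(A)$ after a multiplication as for the as-type; equivalently, one must check that the associator contribution of $x(yz)+(xy)z$ is absorbed by the symmetrized generators, which is exactly the step where the precise form of the sac-generators and the substructure closure must be used most carefully.
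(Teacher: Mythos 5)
Your parts (ii)--(iv) are correct and essentially agree with the paper; indeed your derivation of antiassociativity directly from \eqref{37} plus Axiom~$2_2$, and of second level associativity by applying $(pq)r=-p(qr)$ twice, is self-contained and a little cleaner than the paper's, which obtains (ii) by specializing the proof of (i). The genuine gap is in the first half of (i), and it sits exactly at the point you flagged as delicate. The reduction you propose --- rewrite $x(yz)+(xy)z$ by Axiom~$2_1$/$2_2$ as a combination of the two \emph{ternary} symmetrized generators $u(vw)+u(wv)$ and $(vw)u+(wv)u$ --- provably cannot succeed. In the multilinear degree-three component of the free g-alternative algebra on $x,y,z$, working modulo the span of the six ternary sac elements, the axioms collapse every left monomial $u(vw)$ to $\pm p$ with $p=x(yz)$ and every right monomial $(uv)w$ to $\pm q$ with $q=(xy)z$, but impose no relation between $p$ and $q$ (the twelve relations in the twelve-dimensional monomial space have exactly two dependencies, so the quotient is two-dimensional). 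The aas element equals $p+q\neq 0$ there. Concretely, one application of Axiom~$2_2$ gives $x(yz)+(xy)z\sim x(yz)-(xz)y$, and every further axiom or ternary sac substitution just moves you around inside this two-parameter family without reaching $0$.

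The missing ingredient is that $S_1^{\rm sac}$ also contains the \emph{binary} anticommutators $uv+vu$, $u,v\in A$: the one-dimensional unital algebra $F$ has a derived action on $A$ as the identity (its semidirect product with $A$ is the unitalization, which is again g-alternative), so the triple $(1,u,v)$ yields the sac element $1(uv)+1(vu)=uv+vu$. This is what the paper's chain $(xy)z+x(yz)\sim-(xz)y-y(xz)\sim-(xz)y+(xz)y=0$ uses implicitly in its last step: $y(xz)+(xz)y$ is the anticommutator of $y$ with the product $xz$, not a ternary sac element. Once $uv+vu\in\soci(A)$ is available, the aas element decomposes as $\bigl(x(yz)+(yz)x\bigr)+\bigl((xy)z-(yz)x\bigr)$, where the first bracket is a binary anticommutator and the second lies in $\soci(A)$ by Axiom~$2_2$ and ternary sac elements. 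Your argument for $S_1^{\rm as}\simeq0$ then goes through as in the paper, but note that the chain for $((xy)z)a\sim(x(yz))a$ also needs the already-established $S_1^{\rm aas}\sim0$ (for the step $((xa)y)z\sim-(xa)(yz)$), not only sac-type terms as you state.
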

\begin{proof} (i) First we show that $S_1^{aas}\sim0$. We have
$(xy)z+x(yz)\sim -(xz)y-y(xz)\sim -(xz)y+(xz)y=0$.

For $S_1^{as}\simeq0$, from Axiom $2_1$ we obtain $(x(yz))a\sim
-(xa)(yz)$.

Applying  antiassociativity up to congruence relation (i.e. the
fact that $S_1^{aas}\sim0$) and the definition of $\soci(A)$ we
obtain $((xy)z)a\sim -((xy)a)z\sim ((xa)y)z\sim -(xa)(yz)$.
Therefore we have $(x(yz))a-((xy)z)a\sim 0,$ which proves that
$S_1^{as}\simeq0$.

(ii) Antiassociativity of $A$ is a special case of
$S_1^{aas}\sim0$ in (i), where ``$\sim$'' is replaced by ``='',
since $A$ is anticommutative.  The proof of the second level
associativity of $A$ is a special case of the proof of
$S_1^{as}\simeq0$ in (i), where $x, y,z\in A$ and ``$\sim$'' is
replaced by ``=''again.

(iii) Since $\ann(A)=0$, second level associativity of $A$ implies
associativity and therefore, applying (ii) we have
$x(yz)=(xy)z=-x(yz)$, from which follows the result.

(iv) Since from (iii) $(2\cdot 1_F)x(yz)=0$ and $\ch F
\neq 2$, it follows that $x(yz)=0$ for any $x,y,z\in A$, which
implies that $z=0$, for any $z\in A$, since $\ann(A)=0$.
\end{proof}

\begin{Prop}\label{bar_sac2}
$\bar{S}_2^{\rm sac}\simeq 0$.
\end{Prop}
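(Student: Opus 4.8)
The plan is to prove the four representative forms of elements of $\bar{S}_2^{\rm sac}$, namely (writing $a\in A$ and $b_1,b_2\in\cB(A)$)
\[a(b_1b_2)+a(b_2b_1),\qquad (b_1b_2)a+(b_2b_1)a,\qquad b_1(ab_2)+b_1(b_2a),\qquad (ab_2)b_1+(b_2a)b_1,\]
lie in $\asoci(A)$. Since every operation in sight is $F$-bilinear and, by Proposition \ref{asoci_sub}, $\simeq$ is a congruence on $A$, it suffices to treat the case in which $b_1$ and $b_2$ are monomials $\bbb_{i_1}\cdot\ldots\cdot\bbb_{i_n}$ in the generators, and then to argue by induction on the total number of generator factors. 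The essential constraint to keep in mind is that the only relations available between expressions containing two factors from $\cB(A)$ are the defining equalities $\mathrm{II}_1$ and $\mathrm{II}_2$ of the multiplication of $\cB(A)$ (valid for arbitrary elements of $\cB(A)$, step by step according to the brackets), whereas the remaining conditions $\mathrm{I}_1$--$\mathrm{I}_4$, $\mathrm{II}_3$, $\mathrm{II}_4$ — and with them the identities \eqref{31}--\eqref{38} — may only be invoked when exactly one of the three entries is a genuine generator $b_i\in B_i$.

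First I would record that the four forms are tied together by $\mathrm{II}_1$ and $\mathrm{II}_2$: expanding $(b_1b_2)a+(b_2b_1)a$ by $\mathrm{II}_1$ produces, among its terms, precisely $b_1(b_2a)+b_1(ab_2)$, so that modulo $\asoci(A)$ the validity of one form can be transferred to another. Hence it is enough to settle the form $a(b_1b_2)+a(b_2b_1)$, the others following by the same method or by re-expanding via $\mathrm{II}_1$/$\mathrm{II}_2$. Applying $\mathrm{II}_2$ to each summand turns this into a sum of six iterated single actions of the generators on elements of $A$, in which the symmetry $b_1\leftrightarrow b_2$ of the original expression survives and pairs the six terms; exploiting this pairing is the crux.

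The heart of the argument — and the step I expect to be the main obstacle — is to show that this symmetric six-term combination lies in $\asoci(A)$. A single iterated action such as $(ab_1)b_2$ involves two distinct generators, so none of \eqref{31}--\eqref{38} applies to it directly; the way out is to use the definition of $\asoci(A)$, right-multiplying the whole combination by elements $a_1,a_2,\dots\in A$ and showing that after finitely many such multiplications one lands in $\soci(A)$. At each stage the outermost factor becomes a single generator acting on an element of $A$, whence $S_1^{\rm as}\simeq0$ and $S_1^{\rm aas}\sim0$ of Lemma \ref{aas1} become applicable and permit one to re-associate and re-order, gathering the two generators into a symmetric $S_1^{\rm sac}$-pattern that generates $\soci(A)$; the auxiliary terms created along the way are absorbed because $\soci(A)$ and $\asoci(A)$ are ideals and $\cB(A)$-substructures of $A$ (Lemma \ref{asoci}, Lemma \ref{substructure}, Proposition \ref{asoci_sub}). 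The delicate point is the bookkeeping: one must verify that the symmetric cancellations persist through the repeated re-association and that no unpaired two-generator term survives outside $\soci(A)$, which is exactly where the $b_1\leftrightarrow b_2$ symmetry of the (sac) type is indispensable. Finally, the inductive step from generators to arbitrary monomials follows by one further application of $\mathrm{II}_1$/$\mathrm{II}_2$, each correction term either having strictly smaller complexity or already lying in the ideal $\asoci(A)$.
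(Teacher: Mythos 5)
Your overall strategy matches the paper's in outline: reduce to the case where the two $\cB(A)$-entries are words in the generators, expand with the defining formulas for the multiplication in $\cB(A)$, and push the resulting expression into $\soci(A)$ by right-multiplying with elements of $A$ and re-associating via Lemma \ref{aas1}. The paper organizes exactly this into a chain of intermediate lemmas: first $S_2^{\rm sac}\simeq 0$ (both distinguished entries are genuine generators $b_i,b_j$), then $\bar S_1^{\rm sac}\simeq 0$, then $S_2^{\rm as}\simeq 0$ and $S_2^{\rm aas}\simeq 0$, and only then the general two-word case. However, your proposal stops precisely where the real content of the proposition lies. You yourself flag the decisive step --- showing that the symmetric six-term combination obtained after expanding $x(b_ib_j)+x(b_jb_i)$ lands in $\asoci(A)$ --- as ``the main obstacle'' and ``the delicate point,'' and you offer only the expectation that the $b_1\leftrightarrow b_2$ symmetry will make the cancellations work out. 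That is not a proof: after the cancellations the paper is left with $(b_ia)(xb_j)+(b_ja)(xb_i)$, which is \emph{not} itself of type (sac) or (aas) in any single generator, and establishing that it is $\simeq 0$ requires a further right multiplication by $a'$ and a twelve-step chain of congruences that transports $b_i$ past $b_j$ with a sign change, using $S_1^{\rm aas}\sim 0$, $S_1^{\rm as}\simeq 0$ and the fact that $\soci(A)$ is a $\cB(A)$-substructure at each step. Nothing in your outline guarantees that such a chain exists; it is the whole point of the lemma.

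Two further soft spots. First, your claim that the four representative forms are ``tied together'' by $\mathrm{II}_1$/$\mathrm{II}_2$ is too optimistic: expanding $(b_1b_2)a+(b_2b_1)a$ does produce the term $b_1(b_2a)+b_1(ab_2)$, but it also produces $b_2(b_1a)+b_2(ab_1)$ and the leftover $-\bigl((b_1a)b_2+(b_2a)b_1\bigr)$, which is not of (sac) type and is not of smaller complexity, so the reduction between forms does not close up on its own. Second, to finish the case where the two $\cB(A)$-entries are products of generators (the paper's $(b_ib_j)\bigl(y(b_kb_l)\bigr)+(b_ib_j)\bigl((b_kb_l)y\bigr)$), you will need the two-generator associativity congruences $S_2^{\rm as}\simeq 0$ and $S_2^{\rm aas}\simeq 0$ (the paper's Lemma \ref{aas2}), not merely the one-generator statements of Lemma \ref{aas1}; your induction on the number of generator factors must therefore interleave the (sac), (as) and (aas) types, which your proposal does not set up.
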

 The proof is based on several lemmas.

 \begin{Lem}\label{sac2}
 $S_2^{\rm sac}\simeq 0$.
\end{Lem}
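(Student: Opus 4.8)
The plan is to show that every element of $S_2^{\rm sac}$ lies in $\asoci(A)$ by pushing it, modulo the congruences $\sim$ and $\simeq$, into $\soci(A)$ using only the identities that are already available with a single external factor. Since the two forms $x(yz)+x(zy)$ and $(yz)x+(zy)x$ of type (sac) are interchanged by the duality $x\circ y=yx$ (which swaps Axiom~$2_1$ and Axiom~$2_2$, hence the whole setup), I will treat the left form and obtain the right form by the dual argument. For a left-form element I split into cases according to which two of the three slots $x,y,z$ carry the external factors from $\bigcup_{i\in I}B_i$: either (a)/(b) the outer slot $x$ together with one inner slot, or (c) the two inner slots $y,z$. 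The tools are the identities \eqref{31}--\eqref{38} (valid whenever one slot is external), the congruences $S_1^{\rm aas}\sim 0$ and $S_1^{\rm as}\simeq 0$ of Lemma~\ref{aas1}, the inclusion $S_1^{\rm sac}\subseteq\soci(A)\subseteq\asoci(A)$, and the fact (Proposition~\ref{asoci_sub} and Lemma~\ref{asoci}) that $\soci(A)$ and $\asoci(A)$ are $\cB(A)$-substructures and two-sided ideals, so that $\sim$ and $\simeq$ are congruences compatible with the actions.

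In the outer cases (a)/(b) the representative is $w=b(b'a)+b(ab')$ with $b\in B_i$, $b'\in B_j$ (possibly $i\neq j$) and $a\in A$; here the two external factors act separately, so no product $bb'$ is formed and $w$ is a genuine element of $A$. To show $w\simeq 0$ I verify $w\in\asoci^1(A)$, i.e. $wa_1\in\soci(A)$ for every $a_1\in A$. Writing $p=b'a$ and $q=ab'$, the triples $(b,p,a_1)$ and $(b,q,a_1)$ each have the single external factor $b$ in the outer-left slot, so $S_1^{\rm aas}\sim 0$ gives $(bp)a_1\sim -b(pa_1)$ and $(bq)a_1\sim -b(qa_1)$; hence $wa_1\sim -b\big((b'a)a_1+(ab')a_1\big)$. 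The inner factor $(b'a)a_1+(ab')a_1$ is exactly the right-form (sac)-element with the single external factor $b'$, so it lies in $S_1^{\rm sac}\subseteq\soci(A)$; applying the $b$-action and using that $\soci(A)$ is a $\cB(A)$-substructure yields $wa_1\in\soci(A)$. Thus $w\in\asoci(A)$, uniformly in $a_1$ (with $n=1$), which settles (a)/(b).

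The remaining case (c) is the heart of the matter. Here the representative is $w'=a(bb')+a(b'b)$ with $b,b'$ in the \emph{same} $B_i$ (for two factors of $\bigcup_i B_i$ to combine inside $A$ they must lie in one $B_i$; otherwise $bb'$ lives only in $\cB(A)$ and the element is of type $\bar S$), so that $bb',b'b\in B_i$. The plan is to expand $bb'$ and $b'b$ by the derived-action axioms $\mbox{II}_1$--$\mbox{II}_4$, turning $w'$ into a combination of terms in which a single external factor is outermost; each such term is then reduced by the technique of the previous paragraph to an element of $\soci(A)$, while the leftover terms are organized so as to cancel modulo $\soci(A)$. I expect this bookkeeping to be the main obstacle: applying $\mbox{II}_j$ produces mixed terms such as $(ba_1)b'+(b'a_1)b$ that are of no single distinguished type, so the argument cannot be done term by term and must be assembled globally, tracking the antisymmetry of the associator under $\sim$ and $\simeq$ and checking that the reductions terminate at a uniform depth $n$ (recall $\asoci(A)$ is defined through right multiplications, so the successive products must reach $\soci(A)$ from the right). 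Once case (c) is closed, the right-form statement follows by the dual computation, completing the proof that $S_2^{\rm sac}\simeq 0$.
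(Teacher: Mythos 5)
Your cases (a)/(b), where the outer slot carries one of the two external factors, are handled correctly and essentially as in the paper: reduce $(b(b'a)+b(ab'))a_1$ modulo $\sim$ to $-b((b'a)a_1+(ab')a_1)$, recognize the inner sum as an element of $S_1^{\rm sac}\subseteq\soci(A)$, and use that $\soci(A)$ is a $\cB(A)$-substructure to conclude membership in $\asoci^1(A)$. The problems are all in your case (c), which is where the content of the lemma lies. First, you restrict to $b,b'$ lying in the same $B_i$, on the grounds that otherwise $bb'$ ``lives only in $\cB(A)$'' and the element belongs to $\bar S_2^{\rm sac}$. That is a misreading of the definitions: $S_2^{\rm sac}$ consists of elements \emph{of $A$} in which two of the three slots are occupied by elements of $\bigcup_{i}B_i$, and when the two inner slots carry $b_i\in B_i$, $b_j\in B_j$ with $i\neq j$, the product $b_ib_j$ is formed in $\cB(A)$ and then acts on $x\in A$, so $x(b_ib_j)+x(b_jb_i)$ is a genuine element of $A$ and of $S_2^{\rm sac}$ (it is \emph{not} in $\bar S_2^{\rm sac}$, which requires slots occupied by arbitrary elements of $\cB(A)$ rather than generators). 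This mixed case is precisely the one the paper's proof is devoted to, and it is needed downstream (e.g.\ in Lemma~\ref{bar sac1} and Proposition~\ref{bar_sac2}); by excluding it you have removed the substance of the statement.

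Second, even for the sub-case you retain, you do not actually prove anything: you announce a plan (expand by II$_1$--II$_4$, ``organize the leftover terms so as to cancel modulo $\soci(A)$'') and explicitly flag the bookkeeping as an unresolved obstacle. That bookkeeping \emph{is} the proof. In the paper it runs as follows: expand $x(b_ib_j)+x(b_jb_i)$ by the multiplication formula of $\cB(A)$, cancel four of the six resulting terms against the already-established congruence $((xb_i)b_j+(b_ix)b_j)a\sim 0$, reduce the remainder to $(b_ia)(xb_j)+(b_ja)(xb_i)$, and then run a chain of roughly a dozen applications of $S_1^{\rm aas}\sim 0$, $S_1^{\rm sac}\subseteq\soci(A)$ and Lemma~\ref{aas1}(i) to show that a further right multiplication by an arbitrary $a'$ lands in $\asoci(A)$. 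Note in particular that this case requires depth two (one multiplies by $a$ and then by $a'$), not the uniform depth one that your sketch suggests and that you only ``expect'' to be checkable. As it stands, the proposal establishes only the easy half of the lemma.
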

\begin{proof}  We shall prove the congruence relation for the
elements of the type $x(yz)+x(zy)$, and the case $(yz)x+(zy)x$ is
left to the reader.

Consider the case where $x=b_i, y=b_j$. For any $a\in A$ we have
\[(b_i(b_jz)+b_i(zb_j))a\sim -(b_ia)(b_jz)-(b_ia)(zb_j) \, .\]
The right side is an element of $S_1^{\rm sac}$ and therefore it
is an element of $\soci(A)$, which proves that
$(b_i(b_jz)+b_i(zb_j))\sim 0$. Analogously, it can be proved that
$((xb_i)b_j+(b_ix)b_j)a\sim0$ and therefore
$(xb_i)b_j+(b_ix)b_j\simeq0$.

Consider the case $y=b_i, z=b_j$. Thus we have to show that
$x(b_ib_j)+x(b_jb_i)\in \asoci(A)$. For any $a\in A$ we have
$(x(b_ib_j)+x(b_jb_i))a=((xb_i)b_j)a+((b_ix)b_j)a-(b_i(xb_j))a+((xb_j)b_i)a+
((b_jx)b_i)a-(b_j(xb_i))a$. Applying the case noted above (i.e.,
$((xb_i)b_j+(b_ix)b_j)a\sim0$) we obtain that the right side is
$\sim$-congruent to the following one $-(b_i(xb_j))a-(b_j(xb_i))a=
-b_i((xb_j)a)-b_i(a(xb_j))+(b_ia)(xb_j)-b_j((xb_i)a)-b_j(a(xb_i))+(b_ja)(xb_i)\sim
(b_ia)(xb_j)+(b_ja)(xb_i)$.

For any $a'\in A$ we have
$((b_ia)(xb_j))a'\sim-((b_ia)a')(xb_j)\sim(((b_ia')a)xb_j)\sim
-(b_ia')(a(xb_j))\sim(b_ia')(x(ab_j))\sim-(b_ia')(x(b_ja))\sim
(b_ia')((b_ja)x)\sim-((b_ia')(b_ja))x\sim((b_ja)(b_ia'))x\sim-(b_ja)((b_ia')x)\sim
(b_ja)((b_ix)a')\simeq((b_ja)(b_ix))a'\sim((b_ja)(xb_i))a'$.

Here we applied the fact that $\soci(A)\sim0$ and Lemma \ref{aas1} (i).
Thus we obtain that $((b_ia)(xb_j)+(b_ja)(xb_i))a'\simeq0$, which
gives that $((x(b_ib_j)+x(b_jb_i))a)a'\in \asoci(A)$, for any $a,
a'\in A$ and therefore $x(b_ib_j)+x(b_jb_i)\in \asoci(A)$.
\end{proof}

 \begin{Lem}\label{bar sac1}
 $\bar{S}_1^{\rm sac}\simeq 0$.
\end{Lem}
\begin{proof} We shall show that $(b_ib_j)(yz)+(b_ib_j)(zy)\in
\asoci(A)$ and other cases are left to the reader.
$(b_ib_j)(yz)+(b_ib_j)(zy)=b_i(b_j(yz))+b_i((yz)b_j)-(b_i(yz))b_j+b_i(b_j(zy))+
b_i((zy)b_j)-(b_i(zy))b_j\simeq-(b_i(yz))b_j-(b_i(zy))b_j\simeq(b_i(zy))b_j-(b_i(zy))b_j\simeq0$.

Here we applied Proposition \ref{asoci_sub} and Lemma \ref{sac2}.
\end{proof}
 \begin{Lem}\label{aas2}
 $S_2^{\rm as}\simeq0$,  \quad $S_2^{\rm aas}\simeq 0$.
\end{Lem}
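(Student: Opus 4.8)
The plan is to establish the two congruences in the stated order, promoting step by step the ``level one'' computations of Lemma~\ref{aas1}(i) to their ``level two'' counterparts: every reduction that was carried out modulo $\soci(A)$ there will now be carried out modulo $\asoci(A)$.

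First I would treat $S_2^{\rm aas}\simeq 0$. A generic element is $x(yz)+(xy)z$ with two of the three entries in $\bigcup_{i\in I}B_i$; there are three placements of the pair of $B$-entries, and I would carry out one representative placement in detail, the others being entirely analogous (in the style of the surrounding lemmas). For a fixed placement I would reproduce the chain of identities used for $S_1^{\rm aas}\sim 0$, i.e.\ the relations \eqref{31}--\eqref{38}, but now each swap of two adjacent factors is absorbed not by $S_1^{\rm sac}\subseteq\soci(A)$ but by $S_2^{\rm sac}\simeq 0$ (Lemma~\ref{sac2}); whenever a product of two $B$-elements such as $b_ib_j$ is produced I would regard it as a single element of $\cB(A)$ and absorb the corresponding swap by $\bar S_1^{\rm sac}\simeq 0$ (Lemma~\ref{bar sac1}). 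Since all the congruences invoked are now modulo $\asoci(A)$, the whole chain collapses to $0$ modulo $\asoci(A)$, yielding $S_2^{\rm aas}\simeq 0$.

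Next I would deduce $S_2^{\rm as}\simeq 0$ exactly as $S_1^{\rm as}\simeq 0$ was obtained from the level-one antiassociativity. Writing $w=x(yz)-(xy)z$ with two entries in $B$, I would multiply by an arbitrary $a\in A$ and expand both $(x(yz))a$ and $((xy)z)a$ using Axiom~$2_1$ (equivalently \eqref{33}) together with the level-two antiassociativity just proved, namely $S_2^{\rm aas}\simeq 0$. Both products should reduce to one and the same expression modulo $\asoci(A)$, so that $wa\simeq 0$ for every $a$. Because $\asoci(A)$ is a $\cB(A)$-substructure and a two-sided ideal (Proposition~\ref{asoci_sub}, Lemma~\ref{asoci}), and because the reduction of $wa$ proceeds through the same finite chain of identities uniformly in $a$, the terms discarded lie in a fixed $\asoci^{M}(A)$; hence $wa\in\asoci^{M}(A)$ for all $a$ and therefore $w\in\asoci^{M+1}(A)\subseteq\asoci(A)$, i.e.\ $S_2^{\rm as}\simeq 0$.

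The main obstacle I expect is bookkeeping rather than a conceptual jump. At every step of the swapping chain I must check that the configuration of $B$- and $\cB(A)$-entries is precisely one to which Lemma~\ref{sac2} or Lemma~\ref{bar sac1} applies (two genuine $B$-factors, or a single $\cB(A)$-factor), and that replacing the stronger relation $\sim$ used at level one by the weaker relation $\simeq$ breaks no step, which is legitimate since $\simeq$ is a congruence compatible with multiplication. The most delicate point is the passage from $wa\simeq 0$ to $w\in\asoci(A)$ in the $S_2^{\rm as}$ argument: one has to confirm that the two expansions of $(x(yz))a$ and $((xy)z)a$ genuinely coincide modulo $\asoci(A)$ for all three placements of the $B$-entries, and that the congruence level $M$ can be taken independent of $a$, so that the levels $\asoci^{n}(A)$ assemble correctly. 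This is exactly where the previously established facts $S_2^{\rm aas}\simeq 0$, $S_2^{\rm sac}\simeq 0$ and $\bar S_1^{\rm sac}\simeq 0$ must be invoked in concert.
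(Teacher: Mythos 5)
Your proposal is correct and follows essentially the same route as the paper: the paper likewise proves a representative case by multiplying by an arbitrary $a\in A$ and collapsing the result through Axiom~2 manipulations together with Lemmas~\ref{sac2} and \ref{bar sac1}, leaving the other placements of the $B$-entries to the reader. The only cosmetic difference is that the paper attacks an $S_2^{\rm as}$ element, namely $b_k(yb_l)-(b_ky)b_l$, directly rather than first establishing $S_2^{\rm aas}\simeq 0$ and deducing $S_2^{\rm as}\simeq 0$ from it, but the inputs and the concluding step (from $wa\simeq 0$ for all $a$ to $w\in\asoci(A)$, whose uniformity issue you rightly note and resolve) are the same.
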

\begin{proof} We shall prove that $b_k(yb_l)-(b_ky)b_l\simeq0$, for
any $y\in A$, $b_k\in B_k$ and $b_l\in B_l$ $k, l\in I$; other
cases are proved in analogous ways applying Lemma \ref{bar sac1}. We have

$(b_k(yb_l))a-((b_ky)b_l)a\simeq-(b_ka)(yb_l)+((b_ky)a)b_l\simeq
((b_ka)y)b_l+((b_ky)a)b_l\simeq-((b_ky)a)b_l+((b_ky)a)b_l\simeq0$.
\end{proof}

 Applying these lemmas we prove Proposition \ref{bar_sac2}.

\begin{proof}(Proposition \ref{bar_sac2}) We shall show that
$(b_ib_j)(y(b_kb_l))+(b_ib_j)((b_kb_l)y)\in \asoci(A)$.

We have
$(b_ib_j)(y(b_kb_l))+(b_ib_j)((b_kb_l)y)=(b_ib_j)((yb_k)b_l+(b_ky)b_l-b_k(yb_l))+
(b_ib_j)(b_k(b_ly)+b_k(yb_l)-(b_ky)b_l)$.

Applying Proposition \ref{asoci_sub} and Lemma \ref{sac2} we
obtain that this expression is $\simeq$-congruent to the following
$-(b_ib_j)(b_k(yb_l))-(b_ib_j)((b_ky)b_l)$, which by Lemma
\ref{aas2} is $\simeq$-congruent to $0$.
\end{proof}

 \begin{Lem}\label{bar_aas1}
 $\bar{S}_1^{\rm as}\simeq 0$, \quad $\bar{S}_1^{\rm aas}\simeq 0$.
\end{Lem}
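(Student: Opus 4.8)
The plan is to deduce both congruences from the generator case treated in Lemma~\ref{aas1}(i) by an induction on the multiplicative length of the element of $\cB(A)$ that occurs in the triple. First I would observe that each of the expressions $x(yz)-(xy)z$ and $x(yz)+(xy)z$ is $F$-linear separately in $x$, $y$ and $z$, that $\simeq$ is a congruence additive in each argument, and that every element of $\cB(A)$ is a finite sum of monomials $\bbb_{i_1}\cdot\ldots\cdot\bbb_{i_n}$ in the generators $\bbb_i$. Hence it suffices to establish the two congruences when the single $\cB(A)$-entry is such a monomial, and I may argue by induction on $n$.

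For the base case $n=1$ the entry is a generator $\bbb_i$, corresponding to an element $b_i\in B_i$; then the relevant element lies in $S_1^{\rm as}$, respectively $S_1^{\rm aas}$, so Lemma~\ref{aas1}(i) gives $S_1^{\rm as}\simeq0$ and, since $\sim$ implies $\simeq$, also $S_1^{\rm aas}\simeq0$. For the inductive step I would split the monomial at its outermost product, writing the entry as $u\cdot v$ with $u,v\in\cB(A)$ of strictly smaller length, and expand the two products appearing in the type-(as) (resp. type-(aas)) expression by means of the defining rule $(u\cdot v)\cdot a=u\cdot(v\cdot a)+u\cdot(a\cdot v)-(u\cdot a)\cdot v$. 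This produces a signed sum of terms in which $u$ and $v$ sit in various nested positions together with the two $A$-entries. I would then reduce these terms using the inductive hypothesis applied to $u$ and to $v$, together with Lemma~\ref{bar sac1}, Proposition~\ref{bar_sac2} and Lemma~\ref{aas2}, and the fact (Proposition~\ref{asoci_sub}) that $\asoci(A)$ is a $\cB(A)$-substructure, so that every cross term is absorbed into $\asoci(A)$.

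The three positional subcases, according to whether the $\cB(A)$-entry is the first, middle or last factor, are handled in the same way; I would carry out one in detail and leave the remaining ones, as is done throughout the section. The main obstacle is the bookkeeping in the inductive step: expanding $u\cdot v$ via the multiplication rule generates many nested associator-like discrepancies, and the work lies in checking that each of them falls into a previously established $\simeq0$ class — in particular that the mixed terms involving $u$, $v$ and the $A$-entries are exactly of the sac-type controlled by Lemma~\ref{sac2}, Lemma~\ref{bar sac1} and Proposition~\ref{bar_sac2}, rather than requiring any relation not yet available for elements of $\cB(A)$.
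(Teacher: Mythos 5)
Your proposal follows essentially the same route as the paper: the paper's proof expands the $\cB(A)$-entry (taken as a product $b_ib_j$ of generators, in the first position) via the defining multiplication rule and reduces the resulting terms using the previously established congruences (Lemmas \ref{aas1}, \ref{sac2}, \ref{bar sac1}, \ref{aas2} and Proposition \ref{bar_sac2}), declaring the remaining positions and longer products ``similar''. Your explicit induction on the length of the monomial, with base case given by Lemma \ref{aas1}(i), is simply the careful version of that ``similar'', so the approach is the same and sound.
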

\begin{proof} We shall prove that $((b_ib_j)y)z-(b_ib_j)(yz)\simeq0$, $y, z\in A$.
We have

$((b_ib_j)y)z-(b_ib_j)(yz)=(b_i(b_jy)+b_i(yb_j)-(b_iy)b_j)z-b_i(b_j(yz))-
b_i((yz)b_j)+(b_i(yz))b_j\simeq -((b_iy)b_j)z+(b_i(yz))b_j\simeq
-((b_iy)b_j)z-(y(b_iz))b_j\simeq((b_iy)z)b_j-(y(b_iz))b_j\simeq
-((yb_i)z)b_j-(y(b_iz))b_j\simeq-(y(b_iz))b_j-(y(b_iz))b_j=0$.

Other cases are proved in similar ways applying Proposition \ref{bar_sac2}.
\end{proof}

\begin{Prop}\label{bar_as2}
$\bar{S}_2^{\rm as}\simeq 0$.
\end{Prop}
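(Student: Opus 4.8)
The plan is to prove $\bar{S}_2^{\rm as}\simeq 0$ by reducing it to the congruences already established in this section. An element of $\bar{S}_2^{\rm as}$ has the form $x(yz)-(xy)z$ with two of the three arguments in $\cB(A)$ and the third in $A$. Since $\simeq$ is a congruence relation compatible with all the operations of $\cB(A)$ (Proposition \ref{asoci_sub}), since $\cB(A)$ is generated by $\bB$ under addition and the dot multiplication, and since the type-(as) expression is $F$-linear in each argument, I would argue by induction on the lengths of the words representing the two $\cB(A)$-arguments (the same inductive device used in Lemma \ref{substructure}). This lets me assume the two $\cB(A)$-arguments are products $\bbb_i\bbb_j$ and $\bbb_k\bbb_l$ of generators; the base subcase in which both are single generators is exactly $S_2^{\rm as}\simeq 0$ (Lemma \ref{aas2}).

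Next I would split into the three positional cases according to whether the $A$-element sits in the first, second, or third slot. These cases are linked by the derived-action identities: because the action of $\cB(A)$ on $A$ is a derived action in $\galt_G$, the identities $\mbox{II}_1$--$\mbox{II}_4$ hold for $B,B'\in\cB(A)$ and $c\in A$. For instance, $\mbox{II}_1$ gives $(BB')c=B(B'c)+B(cB')-(Bc)B'$, so that $B(B'c)-(BB')c\simeq-\bigl(B(cB')-(Bc)B'\bigr)$, which converts the first positional case into the middle one. For the remaining cases I would substitute $B=\bbb_i\bbb_j$ and $B'=\bbb_k\bbb_l$, expand each product by its defining equalities (Axiom~2), and then reduce the resulting sum term by term. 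The tools for the reduction are those proved immediately above: $\bar{S}_2^{\rm sac}\simeq 0$ (Proposition \ref{bar_sac2}) to kill the symmetric combinations, $\bar{S}_1^{\rm as}\simeq \bar{S}_1^{\rm aas}\simeq 0$ (Lemma \ref{bar_aas1}) to re-associate the terms in which a single product has been expanded against $A$-elements, Lemma \ref{aas1}(i) for antiassociativity up to $\sim$, and Proposition \ref{asoci_sub} to keep everything inside the $\cB(A)$-substructure $\asoci(A)$.

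The main obstacle is the combinatorial bookkeeping rather than any conceptual difficulty: expanding two products of generators against a three-fold bracketed expression produces a large number of summands, each a type-(as), type-(aas), or type-(sac) element with various bracketings and sign patterns, and one must match every summand to a previously established congruence. As in the proofs of Lemma \ref{sac2} and Lemma \ref{bar_aas1}, this will typically require passing to $(\cdots)a$ for an arbitrary $a\in A$ and chasing long $\simeq$-chains, since membership in $\asoci(A)$ is witnessed only after finitely many right multiplications. Accordingly I would carry out one representative positional case (say the one with the $A$-element in the middle) in full detail and leave the symmetric cases to the reader, matching the style used throughout this section.
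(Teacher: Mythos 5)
Your plan is essentially the paper's proof. The paper likewise treats the representative case $(b_ib_j)(y(b_kb_l))-((b_ib_j)y)(b_kb_l)$ -- both $\cB(A)$-arguments products of generators, the $A$-element in the middle slot -- and kills it by a chain of $\simeq$-congruences driven by Lemma \ref{aas2}, remarking that the general case follows by further applications of Lemmas \ref{aas2} and \ref{bar_aas1}; your inductive reduction on word length and your choice of representative case match this exactly. One correction, however: it is not true that the action of $\cB(A)$ on $A$ satisfies all of $\mbox{II}_1$--$\mbox{II}_4$. A derived action in $\galt_G$ carries no Axiom~2 conditions; only $\mbox{II}_1$ and $\mbox{II}_2$ hold automatically, because they are the defining equalities of the multiplication in $\cB(A)$, while $\mbox{II}_3$ and $\mbox{II}_4$ fail in general -- that is precisely what the examples of Section \ref{existence_galt} exhibit, and establishing them under $\asoci(A)=0$ is what Theorem \ref{asoci0_derived} uses the present proposition for. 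Your positional reduction survives, since the first- and third-slot cases reduce to the middle-slot case using only the two defining equalities ($\mbox{II}_1$ and $\mbox{II}_2$); just take care not to invoke $\mbox{II}_3$ or $\mbox{II}_4$ as exact identities anywhere in the bookkeeping, or the argument becomes circular.
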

{\begin{proof}  We shall show that
$(b_ib_j)(y(b_kb_l))-((b_ib_j)y)(b_kb_l)\simeq0$. The general case
can be proved by application Lemmas \ref{aas2} and \ref{bar_aas1}. We apply Lemma
\ref{aas2} and obtain

$(b_ib_j)(y(b_kb_l))-((b_ib_j)y)(b_kb_l)\simeq(b_ib_j)((yb_k)b_l)-(((b_ib_j)y)b_k)b_l\simeq
b_i(b_j((yb_k)b_l))-((b_i(b_jy))b_k)b_l\simeq
b_i((b_j(yb_k))b_l)-((b_i(b_jy))b_k)b_l\simeq
(b_i(b_j(yb_k)))b_l-((b_i(b_jy))b_k)b_l\simeq(b_i(b_j(yb_k)))b_l-((b_i(b_jy))b_k)b_l\simeq
((b_i(b_jy))b_k)b_l-((b_i(b_jy))b_k)b_l\simeq0$.
\end{proof}

In analogous ways are proved the following statements.
\begin{Prop}\label{bar_aas2}
$\bar{S}_2^{ \rm aas}\simeq 0$.
\end{Prop}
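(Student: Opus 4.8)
The plan is to prove $\bar{S}_2^{\rm aas}\simeq 0$ by establishing the representative congruence for a typical generator, namely an element of the form
\[
(b_ib_j)\bigl(y(b_kb_l)\bigr)+\bigl((b_ib_j)y\bigr)(b_kb_l),
\]
with $y\in A$ and $b_i,b_j,b_k,b_l$ in the various $B_m$, and then remarking (as the author does for the preceding propositions) that all the remaining placements of the two $\cB(A)$-elements among the three slots are handled by the same technique. The overall strategy mirrors exactly the proof of Proposition \ref{bar_as2}: use Lemma \ref{aas2} (that $S_2^{\rm as}\simeq 0$ and $S_2^{\rm aas}\simeq 0$) to \emph{reassociate} the products $y(b_kb_l)$ and $(b_ib_j)y$ up to $\simeq$-congruence, then use Lemma \ref{bar_aas1} (that $\bar{S}_1^{\rm as}\simeq 0$ and $\bar{S}_1^{\rm aas}\simeq 0$) to peel off the outer $(b_ib_j)$ factor one multiplier at a time, pushing the product into a fully left-normed expression.

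Concretely, first I would apply Lemma \ref{aas2} to replace $(b_ib_j)\bigl(y(b_kb_l)\bigr)$ by $(b_ib_j)\bigl((yb_k)b_l\bigr)$ and $\bigl((b_ib_j)y\bigr)(b_kb_l)$ by $\bigl(((b_ib_j)y)b_k\bigr)b_l$, so that both terms become chains of right multiplications. Then I would use Lemma \ref{bar_aas1} (the $\bar{S}_1^{\rm aas}\simeq 0$ half, which lets me trade $(b_ib_j)(\cdot)$ for $b_i(b_j(\cdot))$ or vice versa up to $\simeq$) together with Lemma \ref{aas2} again to rewrite the first term step by step as
\[
(b_ib_j)\bigl((yb_k)b_l\bigr)\simeq b_i\bigl(b_j((yb_k)b_l)\bigr)\simeq b_i\bigl((b_j(yb_k))b_l\bigr)\simeq \bigl(b_i(b_j(yb_k))\bigr)b_l,
\]
and the second term similarly as $\bigl((b_i(b_jy))b_k\bigr)b_l$. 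At that point the inner factors $b_i(b_j(yb_k))$ and $(b_i(b_jy))b_k$ differ only by the placement of the $b_k$-multiplication, and I expect the antiassociativity statement $\bar{S}_1^{\rm aas}\simeq 0$ (or $\bar{S}_1^{\rm as}\simeq 0$) to make these two $\simeq$-congruent, so that the sum of the two outer $(\cdot)b_l$ terms collapses, leaving $0$.

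The main obstacle will be bookkeeping rather than any genuinely new idea: at each rewriting step one must verify that the difference introduced actually lands in $\asoci(A)$, which requires invoking the correct lemma with the correct triple of arguments and being careful that the two elements lying in $\cB(A)$ occupy the positions for which that lemma was proved. Because $\simeq$ is already established as a congruence (by Proposition \ref{asoci_sub} and the Notation paragraph), I can freely substitute $\simeq$-congruent subexpressions inside larger products, which is what makes the telescoping chain legitimate; the only subtlety is to ensure that when I reassociate I am always in a configuration covered by Lemma \ref{aas2} or \ref{bar_aas1}, never in one that would require a result not yet available. Once the two reassociated outer terms are shown to cancel, the generic generator is $\simeq 0$; invoking symmetry of the argument for the other slot-placements (as the author does throughout this section) then yields $\bar{S}_2^{\rm aas}\simeq 0$ in full.
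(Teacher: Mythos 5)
Your overall strategy is the right one --- the paper gives no explicit proof of this proposition, saying only that it is proved ``in analogous ways'', and the reassociation chain you describe is exactly the analogue of the proof of Proposition \ref{bar_as2}. The reductions $(b_ib_j)(y(b_kb_l))\simeq (b_i(b_j(yb_k)))b_l$ and $((b_ib_j)y)(b_kb_l)\simeq ((b_i(b_jy))b_k)b_l$ are all legitimate applications of $S_2^{\rm as}\simeq 0$ from Lemma \ref{aas2} together with the fact that $\simeq$ is a congruence.

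The gap is in your last step, and it sits precisely at the point where the ``aas'' case differs from the ``as'' case. You say you expect $b_i(b_j(yb_k))$ and $(b_i(b_jy))b_k$ to become \emph{$\simeq$-congruent}, ``so that the sum of the two outer $(\cdot)b_l$ terms collapses, leaving $0$.'' But your element is a \emph{sum}, not a difference: if the two inner factors were congruent with the same sign, the total would be $\simeq 2((b_i(b_jy))b_k)b_l$, which is not known to vanish at this stage (its vanishing is a consequence of $\bar{S}_2^{\rm aas}\simeq 0$ together with Proposition \ref{bar_as2}, so assuming it would be circular). What you actually need is exactly one sign flip: by $S_2^{\rm aas}\simeq 0$ (Lemma \ref{aas2}) applied to the triple $(b_j,y,b_k)$ you get $b_j(yb_k)\simeq -(b_jy)b_k$, hence $b_i(b_j(yb_k))\simeq -b_i((b_jy)b_k)\simeq -(b_i(b_jy))b_k$ after one further application of $S_2^{\rm as}$. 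With this anti-congruence the two reassociated terms cancel and the generator is $\simeq 0$. Your parenthetical ``(or $\bar{S}_1^{\rm as}\simeq 0$)'' is the telltale sign of the confusion: the associativity statements give congruence without a sign change and cannot finish the proof; exactly one application of an antiassociativity statement is required. Once that is corrected, the rest of your argument (handling general elements of $\cB(A)$ by induction on word length and the remaining slot placements by symmetry) goes through as in the paper.
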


\begin{Lem}\label{bar_ap1}
$\bar{S}_1^{\rm ap}\simeq 0$.
\end{Lem}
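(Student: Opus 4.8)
The plan is to follow the same two-stage scheme used for the previous barred sets (Lemmas \ref{bar sac1}, \ref{bar_aas1} and Propositions \ref{bar_sac2}, \ref{bar_as2}): first record the unbarred statement $S_1^{\rm ap}\simeq 0$, and then push a single $\cB(A)$-factor through it. For the unbarred step I would start from identity \eqref{37}, which gives $x(yz)+y(xz)=(xy)z+(yx)z$ whenever one of $x,y,z$ lies in some $B_i$ (recall such identities remain valid with one entry in $B_i$). Combining this with $S_1^{\rm aas}\sim 0$ from Lemma \ref{aas1}(i), which lets me replace each $(uv)w$ by $-u(vw)$ up to $\sim$, and with $S_1^{\rm as}\simeq 0$, the two (ap)-summands are converted into (aas)- and (sac)-type expressions that already lie in $\soci(A)$ or $\asoci(A)$. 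The trailing right-multiplications by $a,a',\dots$ demanded by the definition of $\asoci$ are what make the argument actually close: they are handled exactly as in the proof of Lemma \ref{sac2}, by the repeated reduction $(\cdots)a\sim -(\cdots a)(\cdots)$ coming from Axiom~$2_1$ together with antiassociativity up to $\sim$, so that a naive top-level estimate (which would only give a multiple of the (ap)-element in $\asoci$) is upgraded to a genuine membership in $\asoci(A)$.

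For the barred step proper I would take the generic representative with the $\cB(A)$-entry in one slot, say $(b_ib_j)(yz)+y\bigl((b_ib_j)z\bigr)$, the other two slot-placements and the right-handed form $(yz)(b_ib_j)+\bigl(y(b_ib_j)\bigr)z$ being entirely analogous. Expanding the outer factor $b_ib_j$ by the derived-action identity $\mathrm{II}_1$, and expanding $(b_ib_j)z$ inside the second summand the same way, turns every term into one in which only a single generator $b_i$ or $b_j$ occupies the decisive associator position. Each such term is then absorbed into $\asoci(A)$ using the single-generator results $S_1^{\rm as}$, $S_1^{\rm aas}$ (Lemma \ref{aas1}), $S_2^{\rm as}$, $S_2^{\rm aas}$ (Lemma \ref{aas2}) and the already proven $\bar{S}_1^{\rm as}$, $\bar{S}_1^{\rm aas}$ (Lemma \ref{bar_aas1}); Proposition \ref{asoci_sub} guarantees that multiplying an $\asoci$-element by a $\cB(A)$-element keeps it in $\asoci(A)$, and Lemma \ref{asoci} together with the fact that $\soci(A)$ is an ideal lets me discard whole summands that have landed in $\soci(A)$. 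A straightforward induction on the word length of $b\in\cB(A)$, based on Lemma \ref{substructure}, extends the case $b=b_ib_j$ to arbitrary $b$.

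The main obstacle is purely the bookkeeping of the cancellations, not any conceptual gap: as in Lemmas \ref{bar sac1} and \ref{bar_aas1}, the proof succeeds because after all reductions the residual terms pair off and cancel (the typical ending being a line of the form $-w-w$ with $w\in\soci(A)$, or two $\simeq$-equal terms of opposite sign). The delicate point is to order the reductions so that at each step exactly one generator sits in the position governed by the already-established congruence, and to keep track of the signs produced by the $(1\leftrightarrow 2,-)$–type moves \eqref{31}--\eqref{35}; a misplaced sign would break the final cancellation. Since every ingredient — \eqref{37}, Lemma \ref{aas1}, Lemma \ref{aas2}, Lemma \ref{bar_aas1}, Proposition \ref{asoci_sub} and Lemma \ref{substructure} — is already available, I expect the argument to go through verbatim in the style of the preceding proofs.
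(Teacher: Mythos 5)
The paper gives no explicit proof of this lemma, saying only that it is established ``in analogous ways'' to the preceding barred statements, and your plan is precisely such an analogous argument: reduce the (ap) forms to (sac)/(aas) forms via \eqref{37} and Lemma \ref{aas1}, then expand the $\cB(A)$-factor through the defining formula for the multiplication in $\cB(A)$ and absorb each resulting term using Lemmas \ref{sac2}, \ref{aas2}, \ref{bar_aas1} and Proposition \ref{asoci_sub}. This is essentially the same approach the paper intends, so the proposal is correct.
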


\begin{Prop}\label{bar_ap2}
$\bar{S}_2^{\rm ap}\simeq 0$.
\end{Prop}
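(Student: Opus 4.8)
The plan is to prove $\bar{S}_2^{\rm ap}\simeq 0$ by following exactly the same pattern as in Proposition \ref{bar_as2}, since the statement asserts that elements of type (ap) with two entries in $\cB(A)$ lie in $\asoci(A)$, and the author has already set up the full machinery of congruence-reductions needed for this. Concretely, I would pick a representative element $(b_ib_j)(y(b_kb_l))+((b_ib_j)y)(b_kb_l)$ (the type-(ap) expression $x(yz)+y(xz)$ with $x=b_ib_j$, $z=b_kb_l$ coerced into this form) and reduce it to $0$ up to $\simeq$. The key inputs are the already-established facts $\bar{S}_1^{\rm ap}\simeq 0$ (Lemma \ref{bar_ap1}), together with $S_2^{\rm as}\simeq 0$ and $S_2^{\rm aas}\simeq 0$ (Lemma \ref{aas2}), which let me rewrite products $(yb_k)b_l$, $b_k(b_ly)$, etc., modulo $\asoci(A)$.

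First I would use Lemma \ref{aas2} to replace the doubly-bracketed factor $b_kb_l$ acting inside by a single associated product up to $\simeq$, exactly as in the opening step of Proposition \ref{bar_as2} where $(b_ib_j)(y(b_kb_l))\simeq(b_ib_j)((yb_k)b_l)$. Then I would peel off the leading $b_ib_j$ by expanding it via Axiom~$2_1$ (the defining identity $(b_ib_j)\cdot w = b_i(b_jw)+b_i(wb_j)-(b_iw)b_j$), and discard the terms that fall into $\soci(A)$ by Proposition \ref{asoci_sub} and Lemma \ref{sac2}. The surviving terms should be reassociated step by step using $\bar{S}_1^{\rm ap}\simeq 0$ to convert between $x(yz)$ and $y(xz)$ shapes, producing two terms that cancel as $t-t\simeq 0$. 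The author's phrase ``In analogous ways are proved the following statements'' signals that the chain of $\simeq$-congruences will be a long but mechanical telescoping, mirroring the displayed computation in Proposition \ref{bar_as2} almost line for line.

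The main obstacle will be bookkeeping rather than conceptual: keeping track of which of the eight terms generated by expanding $b_ib_j$ (and by symmetrizing over the type-(ap) pair) land in $\soci(A)$ versus which must be reassociated, and ensuring the signs propagate correctly through repeated applications of the permutation identities \eqref{31}--\eqref{38}. Because the reductions rely on $\simeq$ being a congruence relation (Proposition \ref{asoci_sub}) and on the accumulated lemmas $S_2^{\rm sac}\simeq 0$, $\bar{S}_1^{\rm sac}\simeq 0$, $S_2^{\rm as}\simeq 0$, $S_2^{\rm aas}\simeq 0$, $\bar{S}_1^{\rm ap}\simeq 0$, I expect no new structural idea is required; the risk is purely in executing the substitution chain without dropping a term or mis-ordering a reassociation. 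I would organize the computation as a single displayed chain of $\simeq$-relations, citing the relevant lemma above each reduction step, so that correctness can be verified locally step by step.
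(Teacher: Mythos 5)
Your overall strategy is the one the paper itself intends: Proposition \ref{bar_ap2} is stated with no displayed proof, only the remark that it is proved ``in analogous ways'' to Propositions \ref{bar_sac2} and \ref{bar_as2}, and your plan of a telescoping chain of $\simeq$-congruences driven by Lemmas \ref{sac2}, \ref{aas2}, \ref{bar_ap1} and Proposition \ref{asoci_sub} is exactly that template. However, the one concrete object you commit to is wrong. The type-(ap) element with $x=b_ib_j$, $z=b_kb_l$ is $(b_ib_j)\bigl(y(b_kb_l)\bigr)+y\bigl((b_ib_j)(b_kb_l)\bigr)$; what you wrote down, $(b_ib_j)\bigl(y(b_kb_l)\bigr)+\bigl((b_ib_j)y\bigr)(b_kb_l)$, is $x(yz)+(xy)z$, i.e.\ an element of $\bar{S}_2^{\rm aas}$, so as written you would be proving Proposition \ref{bar_aas2} instead. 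The ``coercion'' of $y(xz)$ into $(xy)z$ that you invoke to bridge this is not a free move: it is not one of the identities \eqref{31}--\eqref{38} (those are only guaranteed when at most one of the three entries lies outside $A$), and it is precisely the kind of statement these propositions exist to establish.

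The coercion can in fact be justified, but doing so changes the shape of the proof. From Proposition \ref{bar_as2} you get $y(xz)\simeq (yx)z$ (triple $(y,x,z)$ with $x,z\in\cB(A)$), and $(xy)z+(yx)z$ is an element of $\bar{S}_2^{\rm sac}$ in its second form, so Proposition \ref{bar_sac2} gives $(yx)z\simeq -(xy)z$; hence $y(xz)\simeq -(xy)z$ (note the sign, which your displayed element gets wrong) and
\[
x(yz)+y(xz)\;\simeq\; x(yz)-(xy)z\;\simeq\;0
\]
by one more application of Proposition \ref{bar_as2}. In other words, once the reduction you gesture at is made rigorous, no further expansion of $b_ib_j$ via Axiom~$2_1$ or telescoping is needed at all: the first (ap) form follows in two lines from \ref{bar_sac2} and \ref{bar_as2}, and the second form $(yz)x+(yx)z$ is handled symmetrically. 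You should also say explicitly that the remaining placements of the two $\cB(A)$-entries among $(x,y,z)$, and the general elements of $\cB(A)$ beyond products $b_ib_j$, are covered by the same reductions together with Lemma \ref{bar_ap1}; the paper is silent on this, but a complete write-up must at least name the cases.
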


 \section{Sufficient conditions for $\cB(A)\in \galt$} \label{sufficient}

 We know from Section \ref{def} that $\cB(A)$ is an object in $\galt_G
 $. We begin
 this section with an example, which shows that the elements of $\cB(A)$ generally
 do not satisfy Axiom~$2_1$ and Axiom~$2_2$, and therefore  $\cB(A)$ is not a
 g-alternative algebra in general.

 \begin{ex}\label{ex_prod}  Let $A$ and  $\Lambda$ be associative algebras over a field $F$ with $\ch F \neq 2, 3$; let $A$
 be anticommutative, $\ann(A)\neq 0$, and $A$
 has a
 derived action of $\Lambda$ in the category of associative
 algebras, such that $\lambda aa'=0, \lambda\lambda ' a=0$ and
 $\lambda a=-a\lambda$, for any $\lambda, \lambda'\in \Lambda,$ and $a,a'\in
A$. For example one can take $\Lambda=A$, since
 anticommutativity of $A$ implies antiassociativity, and together
 with $\ch F \neq 2$ it gives that $aa'a''=0$ for any $a,a',
 a''\in A$. Let $R$ be a g-alternative algebra with unit 1, which acts on $A$
 in $\galt$, in such a way that $1a=a1=a$ for any $a\in A$.

Let $A\times A$ be the product associative algebra. Consider the
 following actions of $R$ and $\Lambda$ on $A\times A$:
\[r(a,a')=(ra,0), \quad
(a,a')r=(ar,0), \qquad
\lambda(a,a')=(0,\lambda a), \quad
(a,a')\lambda=(0,a\lambda),\]
for any $r\in R, \lambda\in \Lambda$ and $(a,a')\in A\times A$.
 It is obvious that the action of $R$ on $A\times A$ is a derived action
and, it can be easily checked, that the action of $\Lambda$ on
$A\times
 A$ is a derived action as well.
 We will show that the equality
\[b_i(b_jb_k)=(b_ib_j)b_k+(b_jb_i)b_k-b_j(b_ib_k) \, ,\]
where $b_i\in B_i$, $b_j\in B_j$, $b_k\in B_k$ doesn't hold in
 general.

 Consider the case, where $b_i=\lambda$, $b_j=b_k=1$,
 $\lambda\in \Lambda$,  and $1$ is unit of $R$.

 We first compute the results of the following actions and obtain:
\begin{align*}
(1 \lambda)(a,a') &=-(0,a\lambda), &
(\lambda 1)(a,a') &=(0, 2\lambda a), \\
(a,a')(1\lambda) &=(0,2a\lambda), &
(a,a')(\lambda 1) &=-(0, \lambda a), \\
\lambda (1(a,a')) &=(0,\lambda a), &
\lambda((a,a')1) &=(0,\lambda a), \\
(1(a,a'))\lambda &=(0,a\lambda), &
((a,a')1)\lambda &=(0,a\lambda), \\
r((a,a')\lambda) &=(0,0), &
r(\lambda(a,a')) &=(0,0), \\
((a,a')\lambda)r &=(0,0), &
(\lambda(a,a'))r &=(0,0),
 \end{align*}

 for any $(a,a`)\in A\times A, r\in R$ and unit $1$ of $R$.

We shall show that the following equality is not true in general
\[(\lambda(1 \cdot 1))(a,a')= ((\lambda
1)1)(a,a')+((1\lambda)1)(a,a')-(1(\lambda 1))(a,a') \, .\]
The computations of both sides give that this equality is
equivalent to the following one \quad
$2\lambda a=4\lambda a -2 a\lambda -\lambda a=0$.

Since $\lambda a=-a\lambda$ and $\ch F\neq 3$, this equality gives
$\lambda a=0$, which is not true in general.

This shows that in the case of this example Axiom~$2_1$ is not
true. The same example Axiom~$2_2$ is not true as well.
 This can be checked by analogous computations or
we can apply the duality in the following way. Define in $A$ the
dual operation by $x \circ y=yx$. Axiom~$2_2$ for the original dot
operation is equivalent to the Axiom~$2_1$ for the dual
``$\circ$'' operation. But since both operations have the  same
properties, we can conclude from the above prove that Axiom~$2_1$
is not true for the ``$\circ$'' operation.
\end{ex}

 We are looking for the sufficient conditions for $\cB(A)$ to be a
 g-alternative algebra, i.e. for the conditions under which the
 elements of $\cB(A)$ satisfy  Axioms~$2_1$ and $2_2$. For any $b_1,
 b_2, b_3 \in \cB(A)$ we must have the following identities
 \begin{enumerate}
 \item[] B1. \quad  $-(b_1(b_2b_3))a+((b_1b_2)b_3)a+((b_2b_1)b_3)a-(b_2(b_1b_3))a=0$ ,
\item[] B2.  \quad $-a(b_1(b_2b_3))+a((b_1b_2)b_3)+a((b_2b_1)b_3)-a(b_2(b_1b_3)=0$;
 \end{enumerate}
 and the dual identities

  \begin{enumerate}
  \item[] B3= B2$^\circ$.  \quad  $-((b_1b_2)b_3)a+(b_1(b_2b_3))a+(b_1(b_3b_2))a-((b_1b_3)b_2)a=0$,

  \item[] B4= B1$^\circ$.  \quad  $-a((b_1b_2)b_3)+a(b_1(b_2b_3)+a(b_1(b_3b_2))-a((b_1b_3)b_2)=0$.
 \end{enumerate}
 First we compute the left side of identity B1. By the definition of the multiplication
in $\cB(A)$ we obtain:
 \begin{multline} \label{B1}
-(b_1(b_2b_3))a+((b_1b_2)b_3)a+((b_2b_1)b_3)a-(b_2(b_1b_3))a\\
= -b_1((b_2b_3)a)-b_1(a(b_2b_3))
+(b_1a)(b_2b_3) + (b_1b_2)(b_3a)
 +(b_1b_2)(ab_3)
  -((b_1b_2)a)b_3\\
  + (b_2b_1)(b_3a)+(b_2b_1)(ab_3)-((b_2b_1)a)b_3-
 b_2((b_1b_3)a)-b_2(a(b_1b_3))+(b_2a)(b_1b_3) \\
  = -b_1(b_2(b_3a))-b_1(b_2(ab_3))+b_1((b_2a)b_3)-
 b_1((ab_2)b_3)-b_1((b_2a)b_3)+b_1(b_2(ab_3)) \\
 +((b_1a)b_2)b_3+(b_2(b_1a))b_3-b_2((b_1a)b_3)+
 b_1(b_2(b_3a))+b_1((b_3a)b_2)-(b_1(b_3a))b_2 \\
 +  b_1(b_2(ab_3))+b_1((ab_3)b_2)-(b_1(ab_3))b_2-
 (b_1(b_2a))b_3-(b_1(ab_2))b_3+((b_1a)b_2)b_3 \\
 + b_2(b_1(b_3a))+b_2((b_3a)b_1)-(b_2(b_3a))b_1+
 b_2(b_1(ab_3))+b_2((ab_3)b_1)-(b_2(ab_3))b_1 \\
 - (b_2(b_1a))b_3-(b_2(ab_1))b_3+((b_2a)b_1)b_3-
 b_2(b_1(b_3a))-b_2(b_1(ab_3))+b_2((b_1a)b_3) \\
 - b_2((ab_1)b_3)-b_2((b_1a)b_3)+b_2(b_1(ab_3))+
 ((b_2a)b_1)b_3+(b_1(b_2a))b_3-b_1((b_2a)b_3).
  \end{multline}
The left side of identity B2 gives the following
\begin{multline} \label{B2}
 -a(b_1(b_2b_3))+a((b_1b_2)b_3)+a((b_2b_1)b_3)-a(b_2(b_1b_3))\\
 = -(ab_1)(b_2b_3)-(b_1a)(b_2b_3)+b_1((ab_2)b_3)-
((b_1a)b_2)b_3-(b_1(b_2a))b_3+b_1((b_2a)b_3)\\
+ b_2((ab_1)b_3)+b_2((b_1a)b_3)-b_2(b_1(ab_3))-
(ab_2)(b_1b_3)-(b_2a)(b_1b_3)+b_2(a(b_1b_3))\\
= -((ab_1)b_2)b_3-(b_2(ab_1))b_3+b_2((ab_1)b_3)-((b_1a)b_2)b_3-(b_2(b_1a))b_3+b_2((b_1a)b_3)\\
+ b_1((ab_2)b_3)+b_1((b_2a)b_3)-b_1(b_2(ab_3))+((ab_1)b_2)b_3+((b_1a)b_2)b_3-(b_1(ab_2))b_3\\
+(b_1(b_2a))b_3+(b_1(ab_2))b_3-((b_1a)b_2)b_3-b_1(b_2(ab_3))-b_1((ab_3)b_2)+(b_1(ab_3))b_2\\
+((ab_2)b_1)b_3+((b_2a)b_1)b_3-(b_2(ab_1))b_3+(b_2(b_1a))b_3+(b_2(ab_1))b_3-((b_2a)b_1)b_3\\
- b_2(b_1(ab_3))-b_2((ab_3)b_1)+(b_2(ab_3))b_1-((ab_2)b_1)b_3-(b_1(ab_2))b_3+b_1((ab_2)b_3)\\
- ((b_2a)b_1)b_3-(b_1(b_2a))b_3+b_1((b_2a)b_3)+b_2((ab_1)b_3)+b_2((b_1a)b_3)-b_2(b_1(ab_3)).
  \end{multline}

The identities B3 and B4 give the duals to the expressions
\eqref{B2} and \eqref{B1} respectively.

It is easy to see that all obtained expressions are the combinations
of the elements of the following type
\begin{enumerate}
\item[] $\mathbf{A}1=b_1(b_2(b_3a))+b_1(b_2(ab_3))$,

\item[] $\mathbf{A}2=b_1((ab_2)b_3)+b_1((b_2a)b_3)$,

 \item[] $\mathbf{A}3=((b_1a)b_2)b_3+(b_2(b_1a))b_3$,

\item[]  $\mathbf{A}4=b_1(b_2(ab_3))+b_2(b_1(ab_3))$,

\item[]  $\mathbf{A}5=((b_1a)b_2)b_3+((b_2a)b_1)b_3$,

\item[]  $\mathbf{A}6=b_1(b_2(b_3a))+b_1((b_3a)b_2)$,

\item[]  $\mathbf{A}7=b_1(b_2(ab_3))+b_1((ab_3)b_2)$,

\item[]  $\mathbf{A}8=(b_1(b_2a))b_3+(b_1(ab_2))b_3$,

\item[]  $\mathbf{A}9=((ab_1)b_2)b_3+(b_2(ab_1))b_3$,

\item[]  $\mathbf{A}10=((ab_3)b_1)b_2+((ab_3)b_2)b_1$,

 \item[] $\mathbf{A}11=b_3(b_2(ab_1))+b_3(b_1(ab_2))$,
\end{enumerate}
 where $b_1, b_2, b_3 \in \cB(A),  a\in A$.

\begin {theo} If for any $b_1, b_2, b_3 \in \cB(A), a\in A$, we have
the equalities $\mathbf{A}i=0$ for $i=1,\dots,11$, then $\cB(A)$ is
a g-alternative algebra.
\end{theo}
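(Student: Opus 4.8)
The plan is to show that the four identities B1–B4 (equivalently, that the elements of $\cB(A)$ satisfy Axiom~$2_1$ and Axiom~$2_2$ when acting on any $a\in A$) all collapse to $0$ under the hypothesis $\mathbf{A}i=0$ for $i=1,\dots,11$. Since $\cB(A)$ is already an object of $\galt_G$ and an $F$-algebra, the only thing missing for $\cB(A)\in\galt$ is precisely these two axioms, and by the congruence relation ``$\sim$'' on $\cB'(A)$ (where $b\sim b'$ iff $b\cdot a=b'\cdot a$ for all $a$) it suffices to verify that the left-hand sides of B1 and B2, together with their duals B3 and B4, vanish as maps $A\to A$.

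First I would take the fully expanded right-hand sides of \eqref{B1} and \eqref{B2}, which the paper has already computed as long sums of triple products of $b_1,b_2,b_3$ and $a$. The key observation, already recorded just before the theorem, is that \emph{every} summand appearing in these expansions (and in the duals) can be organized into the eleven paired expressions $\mathbf{A}1,\dots,\mathbf{A}11$. So the main step is bookkeeping: I would group the $36$ terms of \eqref{B1} into a signed combination of the $\mathbf{A}i$ and verify that, after cancellation of terms common to several groupings, the entire expression for B1 is a linear combination $\sum_i \varepsilon_i\,\mathbf{A}i$ with $\varepsilon_i\in\{0,\pm1\}$. By hypothesis each $\mathbf{A}i=0$, hence B1 $=0$. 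Then I would repeat the same grouping for \eqref{B2} to get B2 $=0$, and finally invoke the duality $x\circ y=yx$ (under which Axiom~$2_2$ for $\circ$ is Axiom~$2_1$ for the original product, as noted after the definition of g-alternative algebras) to deduce B3 $=$ B2$^\circ$ $=0$ and B4 $=$ B1$^\circ$ $=0$ without a separate computation.

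The hard part will be the combinatorial matching: the expansions in \eqref{B1} and \eqref{B2} each contain many terms that are not literally one of the listed $\mathbf{A}i$ summands but become so only after pairing with a partner term of opposite sign, so I must be careful to pair each term with the correct partner and track which surplus terms cancel directly against each other (rather than being absorbed into some $\mathbf{A}i$). Concretely I expect several terms such as $(b_1(b_2a))b_3$, $-b_1((b_2a)b_3)$, and their analogues to cancel in pairs \emph{before} the $\mathbf{A}i$ grouping is applied, and the residual terms to assemble exactly into the eleven expressions; getting the signs and the pairing right is the whole content of the argument.

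Thus the proof reduces to the following steps, carried out in order: (1) observe that $\cB(A)\in\galt_G$ and is an $F$-algebra, so only Axioms~$2_1$ and $2_2$, i.e.\ B1--B4, remain to be checked; (2) rewrite the expanded expression \eqref{B1} as a signed sum of the $\mathbf{A}i$ (plus terms cancelling in pairs) and conclude B1 $=0$; (3) do the same for \eqref{B2} to obtain B2 $=0$; (4) apply the $\circ$-duality to obtain B3 $=$ B4 $=0$ for free. Since all four identities hold as equalities of maps $A\to A$, the elements of $\cB(A)$ satisfy both axioms, so $\cB(A)$ is a g-alternative algebra, which is exactly the claim.
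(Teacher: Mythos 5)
Your proposal is correct and follows essentially the same route as the paper: the paper's own proof simply observes that the expansions \eqref{B1}, \eqref{B2} and their duals are combinations of the expressions $\mathbf{A}1,\dots,\mathbf{A}11$ (a claim it records just before the theorem without carrying out the term-by-term grouping), so the hypothesis $\mathbf{A}i=0$ forces B1--B4 to vanish as maps $A\to A$, giving Axioms~$2_1$ and $2_2$ for $\cB(A)$. Your additional remarks on the equivalence relation defining $\cB(A)$ and on the $\circ$-duality handling B3 and B4 are consistent with the paper's argument.
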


\begin{proof} The proof follows directly from the identities \eqref{B1},
\eqref{B2} and the dual identities, which proof that under the
conditions of the theorem we have Axioms~$2_1$ and $2_2$ for the
elements of $\cB(A)$.
\end{proof}

On the other hand the following proposition shows that $\cB(A)$ is
a g-alternative algebra up to the congruence relation $\simeq$.
\begin {Prop} For any $b_1, b_2, b_3 \in \cB(A), a\in A$, we have
$\mathbf{A}i\simeq 0$ for $i=1,\dots,11$.
\end {Prop}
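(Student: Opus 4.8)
The plan is to show that each of the eleven elements $\mathbf{A}1,\dots,\mathbf{A}11$ lies in $\asoci(A)$, using the congruence relations already established in Section \ref{properties}. The key observation is that each $\mathbf{A}i$ has a specific syntactic shape: it is a sum of two monomials obtained from one another by some permutation of the three ``$b$'' factors together with $a$, possibly with signs. These shapes correspond exactly to the element types (sac), (as), (aas) and (ap) introduced before Proposition~\ref{bar_sac2}, where one or two of the three slots $(x,y,z)$ are occupied by elements coming from $\cB(A)$ (that is, by products $b_ib_j$ or single $b_i$'s) and the remaining slot by an element of $A$. The whole point of having proved Propositions~\ref{bar_sac2}, \ref{bar_as2}, \ref{bar_aas2}, \ref{bar_ap2} and Lemmas~\ref{bar sac1}, \ref{bar_aas1}, \ref{bar_ap1} is precisely that all of these types are $\simeq 0$.

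First I would fix notation and reduce to the generating case $b_1=b_ib_j$, $b_2=b_kb_l$, $b_3=\cdots$ (or single generators), exactly as in the proofs of the earlier propositions, since $\simeq$ is a congruence relation compatible with all operations and $\asoci(A)$ is a $\cB(A)$-substructure (Proposition~\ref{asoci_sub}); this lets me pass freely between $\cB(A)$-elements and finite dot-combinations of the $\bbb_i$. Then I would go through the eleven elements one at a time, matching each to the appropriate bar-type. For instance, $\mathbf{A}6=b_1(b_2(b_3a))+b_1((b_3a)b_2)$ is, after setting $c=b_3a$, an element of the form $b_1(b_2 c)+b_1(c b_2)$, i.e. type (sac) with $b_1,b_2$ coming from $\cB(A)$; hence $\mathbf{A}6\simeq 0$ by Proposition~\ref{bar_sac2} ($\bar S_2^{\rm sac}\simeq 0$). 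Similarly $\mathbf{A}1=b_1(b_2(b_3a))+b_1(b_2(ab_3))$ reduces, via $b_2(b_3a)+b_2(ab_3)\simeq 0$ (a (sac)-type relation), and then multiplication by $b_1$, to $0$. The elements of (as)/(aas) shape, such as $\mathbf{A}2=b_1((ab_2)b_3)+b_1((b_2a)b_3)$, are handled by Propositions~\ref{bar_as2} and \ref{bar_aas2}, and the (ap)-shaped ones like $\mathbf{A}4$ by Proposition~\ref{bar_ap2}; for each I would state the substitution that brings it into canonical form and cite the corresponding result.

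The genuinely routine part is the bookkeeping: verifying that my identification of each $\mathbf{A}i$ with a bar-type is correct modulo the congruences, and chaining together (sac) with antiassociativity ($S_1^{\rm aas}\sim 0$, Lemma~\ref{aas1}) where an element sits one multiplication away from a canonical form. The main obstacle I anticipate is that several of the $\mathbf{A}i$ are not literally of a single type but require first applying one congruence (say, reassociating via Lemma~\ref{bar_aas1} that $\bar S_1^{\rm as}\simeq 0$) to expose the underlying (sac)- or (ap)-pattern, and one must be careful that every intermediate rewriting step is itself justified by an already-proven $\simeq$-relation and not by an unproven instance of Axiom~$2_1$ or $2_2$ for $\cB(A)$ (which is exactly what is \emph{not} available here, per Example~\ref{ex_prod}). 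Once the correct match is pinned down for each of the eleven cases, invoking the relevant proposition closes each one, and since $\asoci(A)$ is closed under addition, the full list $\mathbf{A}i\simeq 0$ follows.
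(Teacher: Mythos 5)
Your plan coincides with the paper's proof, which is literally a direct application of Propositions~\ref{bar_sac2}, \ref{bar_as2}, \ref{bar_aas2} and \ref{bar_ap2}: each $\mathbf{A}i$ is matched to one of the bar-types with two slots occupied by $\cB(A)$-elements, and closure of $\asoci(A)$ under the $\cB(A)$-action (Proposition~\ref{asoci_sub}) absorbs the extra outer factor, exactly as you describe. One bookkeeping slip worth fixing: $\mathbf{A}2=b_1((ab_2)b_3)+b_1((b_2a)b_3)$ is of type (sac) in its second form $(yz)x+(zy)x$ (with $y=a$, $z=b_2$, $x=b_3$), not of type (as)/(aas), so it is settled by Proposition~\ref{bar_sac2} rather than by Propositions~\ref{bar_as2} and \ref{bar_aas2}.
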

\begin{proof}  Direct application of the statements \ref{bar_sac2}, \ref{bar_as2}, \ref{bar_aas2}
and \ref{bar_ap2}.
\end{proof}

\begin{coro}\label{asoci0} If $\asoci(A)=0$, then $\cB(A)$ is a
g-alternative algebra.
\end{coro}

\begin{Lem}\label{asoci10} Let $\{B_i\}_{i\in I}$ be the family of all g-alternative
algebras which have a derived action on $A$. The following
conditions are equivalent:
\begin{enumerate}
\item[(a)] $\asoci(A)=0$;

\item[(b)] $\asoci^1(A)=0$;

\item[(c)] every derived action of $B_i$ on $A$ is anticommutative, for
any $i\in I$ (i.e. $b_ia=-ab_i$, $b_i\in B_i$, $a\in A$) and
$\ann(A)=0$.
\end{enumerate}
\end{Lem}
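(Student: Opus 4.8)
The plan is to prove the cycle of implications $(a)\Rightarrow(b)\Rightarrow(c)\Rightarrow(a)$, keeping in mind the already-established facts $\soci(A)\subseteq\asoci^1(A)\subseteq\asoci^n(A)\subseteq\asoci(A)$ and that $\asoci(A)$ is a two-sided ideal (Lemma~\ref{asoci}). The implication $(a)\Rightarrow(b)$ is immediate from the inclusion $\asoci^1(A)\subseteq\asoci(A)$, so nothing needs to be done there.

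For $(b)\Rightarrow(c)$, I would argue as follows. Assume $\asoci^1(A)=0$. Since $\soci(A)\subseteq\asoci^1(A)=0$, in particular $S_1^{\mathrm{sac}}=0$, which says $x(yz)+x(zy)=0$ and $(yz)x+(zy)x=0$ whenever exactly one of $x,y,z$ lies in some $\bigcup_i B_i$. Specializing the sac-type element $b_i(aa')+b_i(a'a)$ and using the structure of the action should force the relevant symmetrized products to vanish; combined with the definition of $\asoci^1$ (that $xa\in\soci(A)=0$ for all $a$) I want to extract that $b_i a+a b_i$ acts trivially, i.e. $b_i a=-ab_i$ for all $b_i\in B_i$, $a\in A$. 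The annulator claim $\ann(A)=0$ follows because $\ann(A)=Z(A)=\Ker d$, and any $z\in\ann(A)$ satisfies $za=az=0$, so in particular $za_1\in\soci(A)$ trivially, placing $z\in\asoci^1(A)=0$. The delicate point here is making the passage from ``$S_1^{\mathrm{sac}}=0$ on triples with one entry in $B_i$'' to the pointwise anticommutativity $b_ia=-ab_i$ fully rigorous rather than merely up to congruence; this is where I expect to lean on the explicit identities \eqref{31}--\eqref{38} restricted to the case where one element lies in $B_i$.

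For the reverse implication $(c)\Rightarrow(a)$, assume every derived action is anticommutative and $\ann(A)=0$. The strategy is to show $A$ itself is anticommutative and then invoke Lemma~\ref{aas1}. Taking the trivial inner action (or the action of $d(A)\subseteq\cB(A)$), anticommutativity of the action $b_ia=-ab_i$ specialized through $d$ should yield $aa'=-a'a$ in $A$, making $A$ an anticommutative g-alternative algebra with $\ann(A)=0$. Then Lemma~\ref{aas1}(iii) gives that $A$ is associative and antiassociative with $2x(yz)=0$; together with anticommutativity this forces every sac-type generator to vanish, hence $\soci(A)=0$. To conclude $\asoci(A)=0$, I would show by induction that $\asoci^n(A)=0$ for all $n$: if $x\in\asoci^1(A)$ then $xa\in\soci(A)=0$ for all $a$, and since $\ann(A)=0$ characterizes exactly the elements killed on the right, anticommutativity promotes right-annihilation to two-sided annihilation, forcing $x\in\ann(A)=0$; the higher $\asoci^n$ collapse by the same annihilation argument applied iteratively.

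The main obstacle I anticipate is the $(b)\Rightarrow(c)$ step, specifically converting the congruence-level and generator-level vanishing statements ($S_1^{\mathrm{sac}}=0$, $\soci(A)=0$) into the clean pointwise identity $b_ia=-ab_i$ valid for every individual $b_i$ and $a$. The subtlety is that $\cB(A)$ is built from \emph{all} split extensions simultaneously, so one must be careful that the anticommutativity extracted holds for each fixed derived action $B_i$ rather than only after quotienting. I would handle this by working directly with a single fixed extension $E_j$ and its associated action before reassembling, using that the defining condition of $\asoci^1(A)=0$ is a statement about honest membership in $\soci(A)$, which under the hypothesis is the zero ideal, so all the congruences ``$\sim$'' become genuine equalities.
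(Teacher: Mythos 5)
Your proposal follows essentially the same route as the paper's proof: (a)$\Rightarrow$(b) from the inclusion $\asoci^1(A)\subseteq\asoci(A)$, (b)$\Rightarrow$(c) by forcing $b_ia+ab_i$ into $\asoci^1(A)=0$ and observing $\ann(A)\subseteq\asoci^1(A)$, and (c)$\Rightarrow$(a) by showing $\soci(A)=0$ and then peeling off right factors one at a time using $\ann(A)=0$ together with anticommutativity. The ``delicate point'' you flag in (b)$\Rightarrow$(c) dissolves exactly as you suspect: the element to use is $(b_ia)a_1+(ab_i)a_1$, which lies in $S_1^{\rm sac}\subseteq\soci(A)=0$ for every $a_1$, so $b_ia+ab_i\in\asoci^1(A)=0$ as an honest equality with no congruence-level subtlety (and in (c)$\Rightarrow$(a) you do not even need Lemma~\ref{aas1}(iii), since bilinearity of each action together with its anticommutativity already annihilates every generator of $S_1^{\rm sac}$).
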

\begin{proof} The implication (a)$\Rightarrow$(b) is obvious, since
$\asoci^1(A)\subseteq \asoci(A)$.

(b)$\Rightarrow$(c). We have $b_ia+ab_i\in \asoci^1(A)$; since
$\asoci^1(A)=0$,
 it follows that every derived action is anticommutative; in particular,
 since $A$ has a derived action on itself,  $A$ is anticommutative. From this it follows that the right and left
annulators of $A$ coincide and thus $\rann(A)=\lann(A)=\ann(A)$.
As we have noted in Section \ref{properties}, we have an inclusion
$\soci(A)\subseteq \asoci^1(A)$, from which we obtain that
 $\soci(A)=0$. Therefore
we obtain that $\ann(A)=\asoci^1(A)=0$.

(c)$\Rightarrow$(a). Since every action of $B_i$ on $A$ is
anticommutative, we have $\soci(A)=0$. Therefore for $x\in
\asoci(A)$ there exists a natural number $n$, such that for any
$a_1,\dots,a_n\in A$ we have $(\dots((xa_1)a_2)\dots a_n)=0$.
Since $\ann(A)=0$, it follows that $(\dots ((xa_1)a_2)\dots
a_{n-1})=0$. Thus applying analogous arguments we obtain that
$xa_1=0$ for any $a_1\in A$ and therefore $x=0$ since $\ann(A)=0$,
which ends the proof.
\end{proof}

\begin{Prop} \label{anti}If $A$ is anticommutative g-alternative algebra over a
field $F$ and $\ann (A)=0$, then $(a_1b)a_2=a_1(ba_2)$, for
any g-alternative algebra $B$ with derived action on $A$ and any
$a_1, a_2\in A, b\in B$.
\end{Prop}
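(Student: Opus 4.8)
The plan is to exploit that, under these hypotheses, Lemma~\ref{aas1}(iii) already guarantees that $A$ is simultaneously associative and anticommutative. Writing $D=(a_1b)a_2-a_1(ba_2)$, the goal is to show $D=0$, and the key reduction is that it suffices to prove $a_3D=0$ for every $a_3\in A$: since $A$ is anticommutative and $D\in A$, this automatically forces $Da_3=-a_3D=0$ as well, so $D$ lies in the left--right annulator $\ann(A)$, which vanishes by assumption. Thus no separate right-hand computation is needed, which is the trick that keeps the argument short.

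First I would record a few auxiliary relations, all valid for $a_1,a_2\in A$ and $b\in B$, obtained by feeding anticommutativity of $A$ into the derived-action identities $\mathrm{I}_1$--$\mathrm{I}_4$. From $\mathrm{I}_3$, using $b(a_2a_1)=-b(a_1a_2)$, one gets $(ba_1)a_2=-(ba_2)a_1$; dually, from $\mathrm{I}_4$ one gets $a_1(a_2b)=-a_2(a_1b)$. Combining each of these with the anticommutativity of the \emph{internal} product of $A$ (which applies because $a_1b$ and $ba_2$ are genuine elements of $A$) yields the two ``exchange'' rules $(a_1b)a_2=a_1(a_2b)$ and $a_1(ba_2)=(ba_1)a_2$; informally, a factor $b$ sitting to the right (resp. left) of an inner factor can be slid across.

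With these in hand the central computation is brief. Multiplying $D$ on the left by $a_3$ and using associativity of $A$ together with the two exchange rules gives $a_3D=\big((a_3b)a_1\big)a_2-\big(b(a_3a_1)\big)a_2=\big[(a_3b)a_1-b(a_3a_1)\big]a_2$. Expanding $b(a_3a_1)$ by $\mathrm{I}_1$ and then applying the exchange rule $a_3(ba_1)=(ba_3)a_1$, the bracket collapses to $0$, whence $a_3D=0$. The main obstacle, and the only spot requiring genuine care, is the sign-and-position bookkeeping inside this bracket: one must keep straight which expressions among $a_1b$, $ba_2$, $(ba_1)a_2$, and so on are actual products within $A$ (so that anticommutativity and associativity of $A$ may be applied) and which encode the $B$-action (so that only $\mathrm{I}_1$--$\mathrm{I}_4$ are available). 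Once $a_3D=0$ is established for all $a_3$, anticommutativity of $A$ and $\ann(A)=0$ finish the proof at once, giving $D=0$, i.e. $(a_1b)a_2=a_1(ba_2)$.
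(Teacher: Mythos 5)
Your proof is correct, and it follows essentially the same strategy as the paper's: test the difference $D=(a_1b)a_2-a_1(ba_2)$ against an arbitrary element of $A$, manipulate using anticommutativity, the (anti)associativity supplied by Lemma~\ref{aas1}, and the derived-action identities, and then conclude from $\ann(A)=0$. The only differences are cosmetic --- you multiply on the left and lean on associativity from Lemma~\ref{aas1}(iii) together with exchange rules extracted from I$_3$, I$_4$ and I$_1$, whereas the paper multiplies on the right and runs a longer chain through antiassociativity and I$_2$ --- and your version is, if anything, cleaner and characteristic-free.
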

\begin{proof} If $\ch F
\neq 2$, then by Lemma \ref{aas1} (iv) $A=0$, so the equality always
holds. Consider the case $\ch F=2$. Since $A$ is anticommutative,
by Lemma \ref{aas1} (ii), it is antiassociative as well, and
therefore, for any $a\in A$, we have the following equalities
\begin{multline*}
((a_1b)a_2)a=-(a_2(a_1b))a=(a_1(a_2b))a=-(a_1a)(a_2b)=a_2((a_1a)b)=\\-((a_1a)b)a_2=
-(a_1(ab)+a_1(ba)-(a_1b)a)a_2=-(-a(a_1b)+a_1(ba)-(a_1b)a)a_2=\\-((a_1b)a+a_1(ba)-(a_1b)a)a_2=
-(a_1(ba))a_2=a_1((ba)a_2)=-a_1((ba_2)a)=(a_1(ba_2))a, \\
(a_1(ba_2))a=-(a_1a)(ba_2)=(a_1(ba_2))a, \qquad  \qquad \qquad  \qquad \qquad
\end{multline*}
from which by the condition $\ann (A)=0$ follows the desired
equality.
\end{proof}

Note that the proof of this proposition doesn't follow from Lemma
\ref{aas1} (i) by taking there $x=a_1, y=b, z=a_2$, since in the
proof of $S_1^{\rm as}\sim0$ is involved $\soci(A)$, which
contains an element from $B$.

\begin{Prop}\label{ann0} Let $A$ be a g-alternative algebra with  $\ann(A)=0$. The following conditions are equivalent:
\begin{enumerate}
\item[(i)] for any g-alternative algebra $B$ with derived action on $A$
in $\galt$, we have $ba=-ab$ for any $a\in A, b\in B$;
\item[(ii)] $A$ is anticommutative.
\end{enumerate}
\end{Prop}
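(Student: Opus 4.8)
The plan is to establish the two implications separately, using that $\ann(A)=0$ throughout.

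For $(i)\Rightarrow(ii)$, I would use that $A$ carries a derived action on itself in $\galt$, namely the regular action given by its own multiplication: when $B=A$, the derived action identities $\mathrm{I}_1$--$\mathrm{I}_4$ and $\mathrm{II}_1$--$\mathrm{II}_4$ are exactly Axiom~$2_1$ and Axiom~$2_2$ for $A$ (this is the observation already used in the proof of Lemma~\ref{asoci10}). Applying hypothesis $(i)$ to this particular $B=A$ then yields $ba=-ab$ for all $a,b\in A$, which is precisely the anticommutativity of $A$.

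For $(ii)\Rightarrow(i)$, I would first dispose of the case $\ch F\neq 2$: by Lemma~\ref{aas1}(iv) an anticommutative g-alternative algebra with $\ann(A)=0$ over such a field is $0$, so every action is trivial and $(i)$ holds vacuously. The substantive case is $\ch F=2$. Fix a g-alternative algebra $B$ with a derived action on $A$, together with $b\in B$ and $a\in A$; the goal is $ab+ba=0$. Since $ab+ba\in A$ and $\ann(A)=0$, it suffices to prove $(ab+ba)a'=0$ and $a'(ab+ba)=0$ for every $a'\in A$.

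For the right products I would compute $(ab)a'=a(ba')$ by Proposition~\ref{anti}, and $(ba)a'=-(ba')a$ from the derived action identity $\mathrm{I}_3$ together with anticommutativity of $A$ and bilinearity of the action (the two terms $b(aa')$ and $b(a'a)$ cancel); then anticommutativity of $A$ applied to the two elements $ba',a\in A$ gives $(ba')a=-a(ba')$. Combining these yields $(ab+ba)a'=a(ba')+a(ba')=2a(ba')=0$ in characteristic $2$. The left products then require no separate computation: since $ab+ba$ and $a'$ both lie in $A$, anticommutativity of $A$ gives $a'(ab+ba)=-(ab+ba)a'=0$. Hence $ab+ba\in\ann(A)=0$, establishing $(i)$.

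The main obstacle is the right-product computation, where the signs must be tracked carefully across Proposition~\ref{anti} and identity $\mathrm{I}_3$; the pleasant point is that the left-product case collapses to the right one by anticommutativity of $A$, so only one genuine computation is needed, and the factor $2$ it produces vanishes in the only relevant characteristic.
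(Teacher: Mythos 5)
Your proof is correct and follows essentially the same strategy as the paper's: (i)$\Rightarrow$(ii) via the action of $A$ on itself, then for (ii)$\Rightarrow$(i) reduce to $\ch F=2$ by Lemma \ref{aas1}(iv) and show $ab+ba\in\ann(A)=0$ using Proposition \ref{anti}, the derived-action identities and anticommutativity. The only difference is that you compute the single right product $(ab+ba)a'$, obtaining $2a(ba')=0$ directly from $\ch F=2$, whereas the paper computes the doubly iterated left product $a''\bigl(a'(ab)+a'(ba)\bigr)$ and invokes Lemma \ref{aas1}(iii); your version is slightly more economical, but the underlying idea is identical.
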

\begin{proof} Here, as in the previous proposition,  we need to
prove only the case, where $\ch F=2$. It is obvious that (i)
$\Rightarrow$ (ii), since A has a derived action on itself.

(ii) $\Rightarrow$ (i). For any
$a',a''\in A$ we have

$a''(a'(ab)+a'(ba))=a''(-a(a'b)+a'(ba))=a''(-a(a'b)+(a'b)a+(ba')a-b(a'a))=
a''(-a(a'b)-a(a'b)+(ba')a-(ba')a-(a'b)a+a'(ba))=-2a''(a(a'b))=0$.

Here we applied anticommutativity of $A$, Axiom~2, Proposition \ref{anti}
and Lemma \ref{aas1} (iii).
\end{proof}

\begin{Prop}\label{antig_ann0} If $A$ is an anticommutative g-alternative algebra and $\ann(A)=0$, then $\cB(A)$ is a g-alternative algebra.
\end{Prop}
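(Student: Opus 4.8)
The plan is to reduce the statement to the already-established Corollary \ref{asoci0}, which asserts that $\asoci(A)=0$ forces $\cB(A)$ to be a g-alternative algebra. Hence the entire task collapses to verifying the single hypothesis $\asoci(A)=0$ under the assumptions that $A$ is anticommutative and $\ann(A)=0$. Everything else is machinery already assembled in the preceding results.

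To secure $\asoci(A)=0$, I would invoke the chain of equivalences in Lemma \ref{asoci10}: it is enough to establish condition (c) there, namely that every derived action of every $B_i$ on $A$ is anticommutative (that is, $b_ia=-ab_i$ for all $b_i\in B_i$ and $a\in A$) together with $\ann(A)=0$. The second half is given by hypothesis, so the only genuine content is the anticommutativity of all the derived actions. For this I would apply Proposition \ref{ann0}: since $\ann(A)=0$ and $A$ is anticommutative, condition (ii) of that proposition holds, hence the equivalent condition (i) holds as well, which is exactly that $ba=-ab$ for any g-alternative algebra $B$ carrying a derived action on $A$. In particular this applies to each member $B_i$ of the family $\{B_i\}_{i\in I}$ of g-alternative algebras acting on $A$. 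Thus condition (c) of Lemma \ref{asoci10} is met, so by that lemma $\asoci(A)=0$, and Corollary \ref{asoci0} then delivers the conclusion.

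I expect essentially no computational obstacle to remain at this stage. All the delicate congruence-relation bookkeeping---the verifications that the elements $\mathbf{A}i$ satisfy $\mathbf{A}i\simeq 0$ and the resulting passage to Corollary \ref{asoci0}---together with the characterizations furnished by Lemma \ref{asoci10} and Proposition \ref{ann0}, have already been carried out. The one point worth a sanity check is that the notion of derived action quantified over an arbitrary acting $B$ in Proposition \ref{ann0} is the same as the one quantified over the family $\{B_i\}_{i\in I}$ in Lemma \ref{asoci10}; since both refer to derived actions in $\galt$, they coincide, and the three results splice together without friction. In short, the proof is a short synthesis that threads Proposition \ref{ann0} into condition (c) of Lemma \ref{asoci10} and then feeds $\asoci(A)=0$ into Corollary \ref{asoci0}.
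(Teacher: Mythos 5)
Your proof is correct and follows exactly the paper's own route: the paper's proof is the one-line instruction ``Apply Corollary \ref{asoci0}, Lemma \ref{asoci10} and Proposition \ref{ann0},'' and your write-up simply makes explicit how these three results chain together (Proposition \ref{ann0} gives anticommutativity of all derived actions, which with $\ann(A)=0$ is condition (c) of Lemma \ref{asoci10}, yielding $\asoci(A)=0$ and hence the conclusion via Corollary \ref{asoci0}). No gaps.
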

\begin{proof} Apply Corollary \ref{asoci0}, Lemma \ref{asoci10} and Proposition \ref{ann0}.
\end{proof}
\section{Sufficient conditions for the existence of an actor in $\galt$}\label{existence_galt}

 It is obvious that the action of $\cB(A)$ on  $A$ satisfies identities II$_1$ and
 II$_2$; but this action in general is not a derived action. We begin
 with examples, which show that all other action identities fail in
general.

 \begin{ex} \label{ex_prod2} Here we consider the product algebra $A\times A$ of  Example \ref{ex_prod} and  we show
 that the identity II$_3$ is not true in general. We take in
 II$_3$:
 instead of $a$ the element $(a,a')\in A\times A, b_1=1, b_2=\lambda$ and obtain that in this case II$_3$
 is equivalent to the following equality \
$(0,a\lambda)=(0,2a\lambda)-(0,\lambda a)$.

 From this, since by assumption $a\lambda=-\lambda a$ and $\ch
 F\neq 2$, we obtain that $a\lambda=0$, which is not true in general.
 \end{ex}

 \begin{ex} Consider the same example of the algebra $A\times A$ as
 in Examples \ref{ex_prod} and \ref{ex_prod2}. We take in II$_4$ instead of $a$ the element $(a,a'), b_1=\lambda,
b_2=1$ and obtain \
$(0,\lambda a)=(0,2\lambda a)-(0,a\lambda)$.

 As in the previous example, this implies $a\lambda=0$,
 which is not true in general.
 \end{ex}

\begin{ex} Here we consider the example which shows that the
 identity I$_1$ is not always true.
 Let $A$ and $R$ be commutative, associative algebras over a
 field $F$ with characteristic 2,
 and $R$ has a derived action on $A$ in the category of commutative, associative
 algebras, i.e. together with the conditions given in Section \ref{def}. We
 have $ra=ar$, for any $a\in A$ and $r\in R$. Obviously this will be a derived action
 in the category of g-alternative algebras as well. Let
$\Lambda$ be a
 g-alternative algebra over the same field $F$, which acts on $A$
 in $\galt$, and there exists an element $a' \in A$, such that
 $a'\lambda$ is not a zero divisor in $A$. We shall show that
 the following equality is not true
\[(r\lambda)(aa')=((r\lambda)a)a'+(a(r\lambda))a'-a((r\lambda)a') \, ,\]
for any $a\in A$. Under the above assumptions on actions and algebras
the computations give the following:

\begin{itemize}

\item  $(r\lambda)(aa')=r(\lambda(aa'))+r((aa')\lambda)-(r(aa'))\lambda=
 r((\lambda a)a')+r((a\lambda)a')-r(a(\lambda a'))-r((a\lambda)a')+((ra)\lambda)a'$;

\item $((r\lambda)a)a'=(r(\lambda a))a'+(r(a\lambda))a'-((ra)\lambda)a'$;

 \item $(a(r\lambda))a'=-(r(a\lambda)a'$;

 \item $-a((r\lambda)a')=-a(r(\lambda
 a'))-a(r(a'\lambda))+a((ra')\lambda)$.
  \end{itemize}

 Thus the above equality is equivalent to the following one:
$a((ra')\lambda)=(ar)(a'\lambda)$.

Applying the fact that $A$ is commutative and  $\ch F=2$, we have
$a((ra')\lambda)=-(ra')(a\lambda)$; therefore we obtain
 $-(ra')(a\lambda)=(ar)(a'\lambda)$,
 which can not be true for any $a$, since in this case
 $a\lambda=0$ implies $ar=0$, because $a'\lambda$ is not a zero
 devisor by assumption.

 The cases of the identities I$_2$, I$_3$, I$_4 $ are considered in
 analogous ways.
 \end{ex}

 \begin{theo}\label{asoci0_derived} If $\asoci(A)=0$, then the action of $\cB(A)$ on
 $A$ is a derived action.
 \end{theo}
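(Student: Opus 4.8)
The plan is to verify directly the eight derived-action identities $\mathrm{I}_1$--$\mathrm{I}_4$ and $\mathrm{II}_1$--$\mathrm{II}_4$ for the action of $B=\cB(A)$ on $A$; since $\cB(A)$ is already known to be g-alternative by Corollary \ref{asoci0}, establishing these identities is exactly what is needed to conclude that the action is a derived action in $\galt$. First I would record the effect of the hypothesis. By definition of the congruence of Section \ref{properties}, the assertion ``$X\simeq 0$'' means $X\subseteq\asoci(A)$; hence when $\asoci(A)=0$ every result of that section stated in the form ``$\cdots\simeq 0$'' becomes a genuine equality in $A$. In particular $\bar{S}_1^{\rm as}=0$ (Lemma \ref{bar_aas1}) and $\bar{S}_2^{\rm as}=0$ (Proposition \ref{bar_as2}) now hold on the nose. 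Unwinding the type (as), namely $x(yz)-(xy)z$, these two facts say precisely that $x(yz)=(xy)z$ whenever one, respectively two, of $x,y,z$ lie in $\cB(A)$ and the remaining factors lie in $A$, and in every position of the bracketed triple. Note also that $\mathrm{II}_1$ and $\mathrm{II}_2$ hold by the very definition of the multiplication of $\cB(A)$: the two formulas defining $(\bbb_i\cdot\bbb_k)\cdot a$ and $a\cdot(\bbb_i\cdot\bbb_k)$ are literally $\mathrm{II}_1$ and $\mathrm{II}_2$, and the action satisfies them for all elements of $\cB(A)$. It therefore remains only to check $\mathrm{I}_1$--$\mathrm{I}_4$, $\mathrm{II}_3$ and $\mathrm{II}_4$.

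For the identities $\mathrm{I}_1$--$\mathrm{I}_4$, in which one factor is $b\in\cB(A)$ and the other two are $a_1,a_2\in A$, I would substitute the associativity equalities supplied by $\bar{S}_1^{\rm as}=0$ into the right-hand side and cancel. For $\mathrm{I}_1$ one has $b(a_1a_2)=(ba_1)a_2$ and $a_1(ba_2)=(a_1b)a_2$, so $(ba_1)a_2+(a_1b)a_2-a_1(ba_2)$ collapses to $(ba_1)a_2=b(a_1a_2)$; for $\mathrm{I}_3$ the equalities $b(a_1a_2)=(ba_1)a_2$ and $b(a_2a_1)=(ba_2)a_1$ reduce the right-hand side to $(ba_1)a_2$ once more. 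The identities $\mathrm{I}_2$ and $\mathrm{I}_4$ are treated identically, using the instances of $\bar{S}_1^{\rm as}=0$ in which the $\cB(A)$-element sits in the third and in the remaining positions of the triple. For $\mathrm{II}_3$ and $\mathrm{II}_4$, in which two factors $b_1,b_2$ lie in $\cB(A)$ and one factor $a$ lies in $A$, the same cancellation works verbatim with $\bar{S}_2^{\rm as}=0$ in place of $\bar{S}_1^{\rm as}=0$; for instance $\mathrm{II}_3$ follows from $a(b_1b_2)=(ab_1)b_2$ and $a(b_2b_1)=(ab_2)b_1$, which turn its right-hand side into $(ab_1)b_2$. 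This establishes all eight identities, so the action of $\cB(A)$ on $A$ is a set of derived actions, and together with $\cB(A)\in\galt$ it is a derived action in $\galt$.

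The cancellations above are purely formal and characteristic-independent once the relevant Section \ref{properties} results are available as equalities, so the real content — and the point that most deserves care — is not the arithmetic but the \emph{availability} of $\bar{S}_1^{\rm as}=0$ and $\bar{S}_2^{\rm as}=0$ in exactly the positional configurations of the bracketed triples demanded by the eight identities, and for \emph{arbitrary} elements of $\cB(A)$ rather than merely generators $\bbb_k$ or products of two generators. This is precisely what the proofs of Lemma \ref{bar_aas1} and Proposition \ref{bar_as2} deliver through their ``other cases are proved in similar ways'' clauses, together with induction on the word length of elements of $\cB(A)$; the main obstacle is thus to confirm that none of the three positions required by $\mathrm{I}_1$--$\mathrm{I}_4$, nor the configuration required by $\mathrm{II}_3$ and $\mathrm{II}_4$, falls outside the cases covered there. (In the degenerate situation $\ch F\neq 2$ one may instead invoke Lemma \ref{aas1}(iv), which forces $A=0$ and makes every identity trivial.)
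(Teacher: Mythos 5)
Your proposal is correct and follows essentially the same route as the paper: the identities $\mathrm{II}_1$ and $\mathrm{II}_2$ hold by the very definition of the multiplication in $\cB(A)$, and the remaining identities $\mathrm{I}_1$--$\mathrm{I}_4$, $\mathrm{II}_3$, $\mathrm{II}_4$ are obtained from Lemma \ref{bar_aas1} and Proposition \ref{bar_as2}, whose conclusions become genuine equalities $\bar{S}_1^{\rm as}=0$ and $\bar{S}_2^{\rm as}=0$ under the hypothesis $\asoci(A)=0$. Your explicit cancellations and your caveat about the positional cases covered by the ``other cases'' clauses merely spell out what the paper's terse proof leaves implicit.
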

 \begin{proof} From the definition of the multiplication in $\cB(A)$ it is easy to see that the identities II$_1$ and
 II$_2$ of Section \ref{acting}
 always hold for the action of  $\cB(A)$ on $A$. For the identities
 $\mbox{I}_1-\mbox{I}_4$ and II$_3$ and II$_4$ we apply statements \ref{bar_aas1} and \ref{bar_as2},
 which show that under the condition of the theorem
$\cB(A)$ has a derived action on $A$. Note that the same can be
proved by the statements \ref{bar sac1}, \ref{bar_ap1},
\ref{bar_sac2} and \ref{bar_ap2}.
\end{proof}

\begin{coro}\label{asoci0_actor} If $\asoci(A)=0$, then $\cB(A)$ is an actor of
$A$.
\end{coro}
\begin{proof} Apply Corollaries \ref{actor_interest}, \ref{asoci0} and Theorem \ref{asoci0_derived}.
\end{proof}

\begin{coro} \label{anti_ann0} If $A$ is anticommutative g-alternative algebra over a field $F$ and
$\ann(A)=0$, then there exists an actor of $A$ and
$\act(A)=\cB(A)$.
\end{coro}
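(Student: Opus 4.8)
The plan is to reduce the two stated hypotheses to the single condition $\asoci(A)=0$ and then invoke the machinery already assembled. Concretely, I would verify clause (c) of Lemma \ref{asoci10} for the family $\{B_i\}_{i\in I}$ of all g-alternative algebras with a derived action on $A$, pass through the equivalence to obtain $\asoci(A)=0$, and conclude by quoting Corollary \ref{asoci0_actor}.

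First I would note that $A$ is anticommutative with $\ann(A)=0$ by hypothesis, so Proposition \ref{ann0} applies. Its implication (ii)$\Rightarrow$(i) yields $b_i a=-a b_i$ for every $a\in A$, every $b_i\in B_i$ and every $i\in I$; that is, not only is the self-action of $A$ anticommutative, but every external derived action of a $B_i$ on $A$ is anticommutative as well. Together with the assumed $\ann(A)=0$, this is precisely clause (c) of Lemma \ref{asoci10}. The equivalence (c)$\Rightarrow$(a) of that lemma then gives $\asoci(A)=0$.

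From here the conclusion is immediate. Corollary \ref{asoci0_actor} states that $\asoci(A)=0$ implies $\cB(A)$ is an actor of $A$, so $\act(A)=\cB(A)$ and in particular an actor exists. Unwinding that corollary, the conclusion rests on Corollary \ref{asoci0} (giving $\cB(A)\in\galt$), on Theorem \ref{asoci0_derived} (giving that the action of $\cB(A)$ on $A$ is a derived action in $\galt$), and on Corollary \ref{actor_interest}, which packages these two facts into the existence of the actor; the g-alternative part of this is also exactly Proposition \ref{antig_ann0}.

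The main obstacle is essentially invisible at the level of this corollary, having been absorbed into Proposition \ref{ann0}: the nontrivial point is that self-anticommutativity of $A$ (which, combined with $\ann(A)=0$, forces the strong consequences of Lemma \ref{aas1}) propagates to every abstract acting algebra $B_i$. Once that propagation is in hand, the remaining steps form a routine bookkeeping chain through Lemma \ref{asoci10} and Corollary \ref{asoci0_actor}, requiring no further computation.
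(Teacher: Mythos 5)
Your proof is correct and follows essentially the same route as the paper's: Proposition \ref{ann0} to propagate anticommutativity to all derived actions, Lemma \ref{asoci10} to conclude $\asoci(A)=0$, and Corollary \ref{asoci0_actor} (together with Proposition \ref{antig_ann0}) to finish. The only cosmetic difference is that the paper first disposes of the case $\ch F\neq 2$ explicitly via Lemma \ref{aas1}~(iv) (where $A=0$), whereas you absorb that degenerate case into the cited statements, which are formulated for arbitrary fields.
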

\begin{proof} If $\ch F
\neq 2$, then, by Lemma \ref{aas1} (iv), it follows that $A=0$, and
obviously $\act(A)=\cB(A)=0$. If $\ch F=2$, we apply Lemma
\ref{asoci10}, Propositions \ref{ann0} and \ref{antig_ann0},
Corollary \ref{asoci0_actor} and obtain the result.
\end{proof}

Here we give the construction of an $F$-algebra of
bimultiplications $\bim_{\galt}(A)$ of a g-alternative algebra $A$
over a field $F$. Below is used the notation $f*$ and $*f$ for the
$F$-linear maps $A\rightarrow A$; we will denote by $fa$ (resp.
$af$) the value $(f*)(a)$ (resp. $(*f)(a)$). This kind of notation
(similar to the one of the actions $b*a$ and $a*b$ in a category
of interest) makes simpler to write down the conditions for
bimultiplications; we will see that these conditions are simply
Axioms~$2_1$ and $2_2$ written for the four different ordered
triples. An element of $\bim_{\galt}(A)$ is a pair $f=(f*,*f)$ of
$F$-linear maps from $A$ to $A$,  which satisfies the following
conditions
\begin{equation}\label{pair}
\begin{split}
f (a_1 a_2)& = (f a_1) a_2 + (a_1 f)a_2 - a_1(f a_2) \, , \\
(a_1 a_2) f & = a_1 (a_2 f) + a_1 (f a_2) - (a_1 f) a_2  \, ,\\
(f a_1) a_2 & = f (a_1 a_2) + f (a_2 a_1) - (f a_2) a_1  \, , \\
a_1(a_2 f)&=(a_1 a_2)f + (a_2 a_1) f - a_2(a_1 f) \, .
\end{split}
\end{equation}
 The product of the elements $f=(f*,*f)$ and $f'=(f'*,*f')$ of
$\bim_{\galt}(A)$ is defined by
\[  ff'=(f*f'*, *f*f')\, ,    \]
here on the right side $f*f'*$ \ and \ $*f*f'$ are defined by
\begin{align*}
&(f*f'*)(a)=f(f'a)+f(af')-(fa)f' \, ,\\
&(*f*f')(a)=(af)f'+(fa)f'-f(af') \, .
\end{align*}
For the addition we have
\[f+f'=((f*)+f'*,*f+(*f')),\]
where
\begin{align*}
&((f*)+f'*)(a)=fa+f'a \, ,\\
&(*f+(*f'))(a)=af+af'.
\end{align*}

The product of two bimultiplications may not have the properties
\eqref{pair}. Therefore we include in $\bim_{\galt}(A)$ all the new
obtained pairs of $F$-linear maps. Note that different products
can give one and the same pairs of maps, i.e.
$(\varphi*,*\varphi)=(\varphi'*,*\varphi')$ if $\varphi a=\varphi'
a$ and $a \varphi=a \varphi'$, where $\varphi=(\varphi*,*\varphi)$
and $\varphi'=(\varphi'*,*\varphi')$ are certain combinations of
bimultiplications.

It is obvious that $\bim_{\galt}(A)$ is an $F$-algebra and it
is an object of $\galt_G$ in general. In the same way as
Corollary \ref{anti_ann0}, it can be proved
\begin{theo}\label{act_bim}
Let $A$ be an anticommutative g-alternative algebra with
$\ann(A)=0$, then there exists an actor of $A$ and
$\act(A)=\bim_{\galt}(A)$.

\end{theo}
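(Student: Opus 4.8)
The plan is to carry out, for $\bim_{\galt}(A)$, the very argument that yielded Corollary~\ref{anti_ann0} for $\cB(A)$, and then to identify the two objects. First I would dispose of the case $\ch F\neq 2$: by Lemma~\ref{aas1}(iv) the hypotheses force $A=0$, so $\bim_{\galt}(A)=\act(A)=0$ and the statement is trivial. Hence assume $\ch F=2$. Next I would observe that the hypotheses give $\asoci(A)=0$: since $A$ is anticommutative with $\ann(A)=0$, Proposition~\ref{ann0} shows that every derived action on $A$ is anticommutative, and then the implication (c)$\Rightarrow$(a) of Lemma~\ref{asoci10} yields $\asoci(A)=0$. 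Consequently the congruence $\simeq$ reduces to genuine equality, so that the statements proved only up to $\simeq$ in Section~\ref{properties} (Proposition~\ref{bar_sac2}, Lemma~\ref{bar_aas1} and Propositions~\ref{bar_as2}, \ref{bar_aas2}, \ref{bar_ap2}) now hold as equalities.

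The core of the proof is to verify that $\bim_{\galt}(A)$ is an object of $\galt$ carrying a derived action on $A$, in complete parallel with Proposition~\ref{antig_ann0} and Theorem~\ref{asoci0_derived}. For the action $f\cdot a=(f*)(a)$, $a\cdot f=(*f)(a)$, the identities $\mbox{II}_1$ and $\mbox{II}_2$ hold by the very definition of the product $ff'$, and $\mbox{I}_1$--$\mbox{I}_4$ on the generating bimultiplications are nothing but the defining conditions~\eqref{pair}; the identities $\mbox{II}_3$, $\mbox{II}_4$, together with $\mbox{I}_1$--$\mbox{I}_4$ for composite elements, then follow from the now-equalities of Lemma~\ref{bar_aas1} and Proposition~\ref{bar_as2}, exactly as in Theorem~\ref{asoci0_derived}. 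In particular every composite of bimultiplications is again a bimultiplication, so $\bim_{\galt}(A)$ is closed under its operations, and Axioms~$2_1$ and $2_2$ for its elements are obtained from the vanishing of the analogues of the expressions $\mathbf{A}i$ ($i=1,\dots,11$), which is now equality rather than $\simeq$-congruence. Thus $\bim_{\galt}(A)\in\galt$ and its action on $A$ is derived.

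Finally I would identify $\bim_{\galt}(A)$ with $\cB(A)$, and hence with $\act(A)$. The map $\Psi\colon\cB(A)\lra\bim_{\galt}(A)$ sending the class of $b$ to the pair $(b\cdot,\cdot b)$ is well defined, since under $\asoci(A)=0$ the action of $\cB(A)$ is derived (Theorem~\ref{asoci0_derived}) and so this pair satisfies~\eqref{pair}; it is a homomorphism because products and sums in both objects are governed by the same formulas $\mbox{II}_1$, $\mbox{II}_2$. It is injective because $\cB(A)=\cB'(A)/\!\sim$ identifies elements of equal action, and surjective because, $\bim_{\galt}(A)$ being now a g-alternative algebra with a derived action, it figures among the acting algebras $\{B_i\}_{i\in I}$, whence each bimultiplication $f$ equals $(f\cdot,\cdot f)=\Psi(\bbb_f)$. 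Therefore $\Psi$ is an action-preserving isomorphism, and Corollary~\ref{anti_ann0} gives $\act(A)=\cB(A)=\bim_{\galt}(A)$. I expect the main obstacle to be the middle step: one must check that the long congruence computations of Section~\ref{properties} transfer verbatim from $\cB(A)$ to $\bim_{\galt}(A)$ and that, with $\asoci(A)=0$, the closure of $\bim_{\galt}(A)$ and the identities $\mathbf{A}i=0$ really hold as equalities; the remainder is formal manipulation of the universal property of Definition~\ref{actor}.
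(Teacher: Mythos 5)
Your argument is correct, but it is organized differently from the paper's. The paper disposes of Theorem~\ref{act_bim} with the single remark that it ``can be proved in the same way as Corollary~\ref{anti_ann0}'', i.e.\ by verifying directly that $\bim_{\galt}(A)$ is a g-alternative algebra, that its action on $A$ is a derived action, and that it satisfies the universal property of Definition~\ref{actor}; the identification $\bim_{\galt}(A)=\cB(A)$ is then deduced \emph{afterwards} (Corollary~\ref{bim_beta}) from the uniqueness part of that universal property. You invert the order: you first build the explicit action-preserving isomorphism $\Psi\colon\cB(A)\to\bim_{\galt}(A)$ and then transfer the actor property from Corollary~\ref{anti_ann0}, so that Corollary~\ref{bim_beta} comes out as a by-product rather than a consequence. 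Both routes carry exactly the same essential burden, namely your ``middle step'': one must know that $\bim_{\galt}(A)$ lies in $\galt$ and acts on $A$ by a derived action (you need this for the surjectivity of $\Psi$, since only then does $\bim_{\galt}(A)$ occur among the acting algebras $\{B_j\}$; the paper needs it even to assert that $\bim_{\galt}(A)$ is an actor). One caveat deserves emphasis there: the congruence lemmas of Section~\ref{properties} and Proposition~\ref{ann0} are stated for elements arising from genuine derived actions of algebras $B_i$, whereas a generating bimultiplication is merely a pair satisfying \eqref{pair} (the identities I$_1$--I$_4$), with no II-identities assumed. The computations do transfer --- they use only I$_1$--I$_4$ for the distinguished elements, and in particular the proof of Proposition~\ref{ann0}(ii)$\Rightarrow$(i) applies verbatim to any pair satisfying \eqref{pair}, giving $fa=-af$, after which the $\bar{S}$-identities collapse as in your outline --- but this is precisely the point to check in detail rather than cite, since those statements as literally formulated do not cover arbitrary bimultiplications. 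Your final identification step (well-definedness, injectivity from the definition of $\sim$, surjectivity, and agreement of the product and sum formulas) is sound.
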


\begin{coro}\label{bim_beta} Under the condition of Theorem \ref{act_bim} we have
$\bim_{\galt}(A)=\cB(A)$.
\end{coro}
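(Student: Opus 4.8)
The plan is to prove Corollary~\ref{bim_beta} by combining Theorem~\ref{act_bim} with Corollary~\ref{anti_ann0}. Under the stated hypotheses ($A$ anticommutative g-alternative with $\ann(A)=0$), both results assert that $A$ possesses an actor, with Corollary~\ref{anti_ann0} identifying $\act(A)=\cB(A)$ and Theorem~\ref{act_bim} identifying $\act(A)=\bim_{\galt}(A)$. The essential point is that an actor, when it exists, is \emph{unique up to isomorphism}: this is immediate from the universal property in Definition~\ref{actor}, since given two actors one obtains mutually inverse comparison morphisms by applying the universal factorization in both directions. Hence I would first invoke both theorems to conclude $\cB(A)\cong \bim_{\galt}(A)$, and then upgrade this to an actual equality by exhibiting the natural identification.

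First I would set up the comparison map explicitly. Each element of $\cB(A)$ is, by construction in Section~\ref{acting}, a (class of a) pair of $F$-linear maps $A\to A$ of the form $(b\cdot,\cdot b)$ obtained from iterated products and sums of the generators $\bbb_j$; two such combinations are identified precisely when they act identically on $A$ on the left (and, by the structure, on the right). Similarly, an element of $\bim_{\galt}(A)$ is a pair $f=(f*,*f)$ of $F$-linear maps subject to the bimultiplication conditions \eqref{pair}, with products and sums defined by the very same formulas used to define multiplication and addition in $\cB(A)$. The plan is to send the class of $(b\cdot,\cdot b)$ to the pair $(b\cdot,\cdot b)$ regarded as a bimultiplication, and check that this is well defined, bijective, and operation-preserving.

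The map is well defined because under $\asoci(A)=0$ (which holds by Lemma~\ref{asoci10} and Proposition~\ref{ann0}) the congruence $\simeq$ collapses to genuine equality of maps, so that the elements of $\cB(A)$ really are honest pairs of $F$-linear maps satisfying \eqref{pair}; the latter is exactly the content of the identities $\mbox{I}_1$--$\mbox{I}_4$ holding for the (now derived) action of $\cB(A)$ on $A$, established in Theorem~\ref{asoci0_derived}. Injectivity is the statement that distinct classes act differently, which is built into the definition of $\cB(A)=\cB'(A)/\!\sim$. For surjectivity one observes that every bimultiplication $f$ arises from a derived action of the one-point extension it defines, and by the universal property of $\cB(A)$ factors through it; concretely, $A$ itself acts via $d\colon A\to\cB(A)$, and the generators $\bbb_j$ together with their products exhaust all bimultiplications because both objects are assembled from the same derived actions $\{B_i\}_{i\in I}$. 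That the bijection respects products and sums is then immediate, since the defining formulas for multiplication and addition in $\bim_{\galt}(A)$ coincide verbatim with those in $\cB(A)$.

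The main obstacle I anticipate is surjectivity: one must verify that \emph{every} pair $f=(f*,*f)$ satisfying \eqref{pair} actually comes from some derived action included in the family defining $\cB(A)$, rather than being an extraneous bimultiplication. I would resolve this by noting that a single bimultiplication $f$ already determines, via the semidirect-product construction of Theorem~\ref{derived_semi}, a split extension of $A$ whose associated action sends a chosen generator to $f$; this extension is among the $\{E_j\}_{j\in\bJ}$, so $f$ is represented in $\bB$ and hence in $\cB(A)$. Once surjectivity is secured, equality $\bim_{\galt}(A)=\cB(A)$ (not merely isomorphism) follows because both are, under $\asoci(A)=0$, literally the same set of pairs of $F$-linear maps with the same operations.
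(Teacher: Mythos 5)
Your first paragraph is exactly the paper's proof: both Corollary \ref{anti_ann0} and Theorem \ref{act_bim} exhibit an actor of $A$, and the universal property of the actor then forces $\cB(A)$ and $\bim_{\galt}(A)$ to coincide. The subsequent explicit identification is redundant --- the mutually inverse comparison morphisms supplied by the universal property already act as the identity on pairs of $F$-linear maps, since elements of both objects are determined by their action on $A$ --- and its surjectivity step (that every pair satisfying \eqref{pair} arises from a split extension of $A$) is the one place that would need more care, but nothing in the corollary depends on it.
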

\begin{proof} The conditions of Corollary \ref{anti_ann0} are fulfilled, from
which it follows that $\cB(A)$ is an actor of $A$. From Theorem
\ref{act_bim} and the universal property of an actor we obtain the desired
equality.
\end{proof}

 \section{The existence of an actor in $\alt$}\label{existence_alt}

 As we have noted in Section \ref{def}, by definition of general category of
 interest,
 we have $\alt_G=\galt_G$. Let $\alt$ be a category of alternative
 algebras
 over a field $F$ with $\ch F=2$. Then, by Proposition \ref{galt}, $\alt
\subset \galt$. In the categories $\alt$ and $\galt$ Axiom~2 is
the same, according to which the multiplication in the
construction of a universal strict general actor is defined.
Therefore, for any $A\in \alt$, the algebra $\cB_{\alt}(A)$,
constructed for the derived actions in $\alt$, is a subalgebra of
$\cB(A)$, constructed for the derived actions in $\galt$ for the
same alternative algebra $A$. Thus we have the inclusion of
algebras $\cB_{\alt}(A)\subseteq \cB(A)$.

 \begin{Prop}\label{ann0_beta} If $A$ is anticommutative
 g-alternative algebra
 and $\ann(A)=0$, then $\cB(A)$ is also anticommutative and
 $\ann(\cB(A))=0$.
 \end{Prop}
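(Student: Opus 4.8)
The plan is to prove the two assertions separately, first showing $\cB(A)$ is anticommutative and then that $\ann(\cB(A))=0$, using throughout the standing hypotheses that $A$ is anticommutative with $\ann(A)=0$. Since we are in the case $\ch F = 2$ (by Lemma \ref{aas1}(iv), if $\ch F \neq 2$ then $A=0$ and the statement is trivial), I would record at the outset that by Proposition \ref{antig_ann0} we already know $\cB(A)$ is a g-alternative algebra, and that by Lemma \ref{asoci10} combined with Proposition \ref{ann0} every derived action of each $B_i$ on $A$ satisfies $b_i a = -a b_i$; moreover $\asoci(A)=0$, so the congruence $\simeq$ collapses to equality. This last observation is the crucial simplification: all the ``up to $\asoci(A)$'' lemmas from Section \ref{properties} become genuine equalities in $A$.

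For anticommutativity of $\cB(A)$, I would argue that it suffices to check $b \cdot a = - a \cdot b$ as equalities of $F$-linear maps for every generator-type element $b = \bbb_{i_1}\cdots \bbb_{i_n}$, since anticommutativity of the whole algebra reduces by bilinearity and the definition of $\sim$ on $\cB(A)$ to this condition on products of elements of $\bB$. The base case $n=1$ is exactly the anticommutativity $b_i a = -a b_i$ furnished by Lemma \ref{asoci10} and Proposition \ref{ann0}. For the inductive step I would compute $(b_i b_j)\cdot a$ and $a \cdot (b_i b_j)$ directly from the defining formulas for multiplication in $\cB(A)$, namely $(b_i b_j)a = b_i(b_j a) + b_i(a b_j) - (b_i a) b_j$ and the dual expression, and verify that one is the negative of the other by repeatedly applying the level-one anticommutativity together with the now-exact identities $S_1^{\rm aas}=0$, $S_1^{\rm as}=0$ (Lemma \ref{aas1}(i), with $\simeq$ read as $=$), and Proposition \ref{anti}, which gives $(a_1 b)a_2 = a_1(b a_2)$. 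Propagating this through longer words by induction on the bracket structure then yields anticommutativity of all of $\cB(A)$.

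For the second assertion $\ann(\cB(A))=0$, I would take $b \in \cB(A)$ with $b \cdot b' = b' \cdot b = 0$ for all $b' \in \cB(A)$ and aim to show $b$ is the zero element, i.e. that $b$ acts as the zero pair of maps on $A$. The natural route is to exploit the crossed module $d\colon A \lra \cB(A)$ described in Section \ref{acting}: for every $a \in A$, $d(a) \in \cB(A)$, so the annihilation hypothesis gives $b \cdot d(a) = 0$ and $d(a)\cdot b = 0$ in $\cB(A)$. Unwinding these through the definition of $d$ and the crossed-module identities, one should be able to extract, for all $a, a' \in A$, relations of the form $(b \cdot a)\cdot a' = 0$ and $a'\cdot(b\cdot a)=0$, i.e. $b\cdot a \in \ann(A)$; since $\ann(A)=0$ this forces $b \cdot a = 0$ for every $a$, and symmetrically $a \cdot b = 0$, so $b = 0$ by the definition of $\cB(A)$ as a quotient by the congruence $\sim$.

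The main obstacle I anticipate is the second part: translating the ring-theoretic vanishing $b\cdot d(a)=0$ in $\cB(A)$ into a usable pointwise statement about the action of $b$ on $A$. The crossed-module formula relating $b \cdot d(a)$ to $d(b \cdot a)$ (equivariance of $d$) is the key lever, since $d$ has trivial kernel exactly $\ann(A)=0$, so $d(b\cdot a)=0$ would immediately give $b \cdot a=0$. I would therefore concentrate on verifying that the crossed-module axioms in $\galt_G$ yield $b\cdot d(a) = d(b\cdot a)$ (and dually), after which the argument closes quickly. The anticommutativity part, by contrast, is a finite but bookkeeping-heavy induction whose only real content is that all the Section \ref{properties} congruences have degenerated to equalities under $\asoci(A)=0$.
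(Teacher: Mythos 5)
Your setup is right (reduction to $\ch F=2$, the chain Lemma \ref{asoci10} / Proposition \ref{ann0} giving $\asoci(A)=0$, and the collapse of $\simeq$ to equality), and your second half is essentially sound: the paper proves $\ann(\cB(A))=0$ by expanding $(b_1b)\cdot a=0$, cancelling two terms by anticommutativity of the action, and then specializing $b=\aaa'=d(a')$ so that $(b_1a)a'=0$ for all $a'$ forces $b_1a\in\ann(A)=0$; your route through $b\cdot d(a)=d(b\cdot a)$ and $\Ker d=\ann(A)=0$ is the same specialization packaged as crossed-module equivariance, and that identity is indeed available because $\asoci(A)=0$ makes the action of $\cB(A)$ on $A$ a derived action, so $\mathrm{I}_3$ and identity \eqref{31} with the middle entry acting give $(b\cdot\aaa)\cdot a'=(ba)a'$ and $a'\cdot(b\cdot\aaa)=a'(ba)$.

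The first half, however, has a genuine gap: you target the wrong identity. Anticommutativity of the algebra $\cB(A)$ means $b_1b_2=-b_2b_1$ in $\cB(A)$, which by the definition of the congruence $\sim$ amounts to $(b_1b_2)\cdot a=-(b_2b_1)\cdot a$ and $a\cdot(b_1b_2)=-a\cdot(b_2b_1)$ for all $a\in A$. What you propose to verify by induction is $b\cdot a=-a\cdot b$, i.e.\ anticommutativity of the \emph{action} of a single element of $\cB(A)$ on $A$; your claim that algebra anticommutativity ``reduces by bilinearity and the definition of $\sim$'' to this condition is unjustified, and it cannot be recovered by formal manipulation. Indeed, from $b_ia=-ab_i$ and the defining formulas one gets $(b_1b_2)a+(b_2b_1)a=(ab_1)b_2+(ab_2)b_1$, which by $\mathrm{II}_3$ equals $a(b_1b_2)+a(b_2b_1)$; feeding in action anticommutativity of the product elements then only yields $2\bigl[(b_1b_2)a+(b_2b_1)a\bigr]=0$, which is vacuous in characteristic $2$ --- the only nontrivial case. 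The missing ingredient is Proposition \ref{bar_sac2}: $\bar{S}_2^{\rm sac}\simeq 0$, which under $\asoci(A)=0$ becomes the equalities $a(b_1b_2)+a(b_2b_1)=0$ and $(b_1b_2)a+(b_2b_1)a=0$, and this is precisely how the paper concludes. Replace your induction by an appeal to that proposition (or reprove it); the rest of your argument then goes through.
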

\begin{proof} If $\ch F
\neq 2$, then, by Lemma \ref{aas1} (iv), it follows that $A=0$, and
obviously $\cB(A)=\ann(\cB(A))=0$. If $\ch F=2$, then by
Proposition \ref{ann0} every derived action on $A$ is
 anticommutative; from
 this and the condition $\ann(A)=0$, by Lemma \ref{asoci10} it follows that $\asoci(A)=0$.
 Applying this fact, from Proposition \ref{bar_sac2}, we obtain that
 $\bar{S}_2^{\rm sac}=0$, which gives the equalities $a(b_1b_2)=-a(b_2b_1)$ and
 $(b_1b_2)a=-(b_2b_1)a$, for any $a\in A$ and $b_1,b_2 \in \cB(A)$, which
 by the construction of $\cB(A)$, proves that $\cB(A)$ is
 anticommutative. Now we shall prove that $\ann(\cB(A))=0$.
 Suppose $b_1b=0$ for any $b\in \cB(A)$, which means that
 $(b_1b)a=0$, for any $a\in A$. By the definition of multiplication in $\cB(A)$ and an action of
 $\cB(A)$ on $A$ we have
\[(b_1b)a=b_1(ba)+b_1(ab)-(b_1a)b=0 \, .\]
Since the action of $\cB(A)$ is anticommutative,  we
 obtain that $(b_1a)b=0$, for any $b\in \cB(A)$,  and in particular,
 for $b=\aaa'$, where $a'$ is any element from $A$. This gives $(b_1a)a'=0$,
 for any $a'$, which means that $b_1a$ is an annulator
 in $A$, therefore $b_1a=0$, for any $a\in A$, which, by
 construction of $\cB(A)$, implies that $b_1=0$.
\end{proof}

Note that in the proof of Proposition \ref{ann0_beta} we could take $b=\aaa$
in $b_1b=0$, for any $a\in A$. In this case we should prove that
$(b_1\aaa)a'=(b_1a)a'$, for any $a'\in A$. For this we would have
to apply the fact that $\cB(A)$ has a derived action on $A$ in
$\galt$. From this we would obtain that $b_1\aaa$ and $b_1a$ are
equal in $\cB(A)$, i.e. $b_1\aaa=cl(b_1a)$, which will imply that
$(b_1a)a'=0$. From this, since $\ann(A)=0$, it would follow that
$b_1a=0$, and therefore $b_1=0$. As we see this proof is longer
than we have presented.

It is easy to see, and it is noted in \cite {Sch,ZSSS},
that any commutative or anticommutative algebra satisfies the
flexible law E$_1$. Therefore, from this proposition and
Proposition \ref{galt} (i), it follows that, under the conditions of
Proposition \ref{ann0_beta}, $\cB(A)$ is an alternative algebra. The following
corollary proves that $\cB(A)$ is an associative algebra as well.

\begin{coro}\label{beta_alt} If $A$ is anticommutative
g-alternative algebra
 with $\ann(A)=0$, then $\cB(A)$ is an associative algebra,
 in particular, $\cB(A)\in \alt$.
 \end{coro}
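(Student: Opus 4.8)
The plan is to apply Lemma \ref{aas1}(iii) directly to the object $\cB(A)$ itself, reading that lemma with $\cB(A)$ in the role of $A$. For this I first have to check that $\cB(A)$ meets the three hypotheses of the lemma, namely that it is an anticommutative g-alternative algebra with trivial annihilator. All three have already been secured: Proposition \ref{antig_ann0} gives $\cB(A)\in\galt$ (this is the crucial point, since a priori $\cB(A)$ is only an object of $\galt_G$, and the lemma genuinely needs Axiom~$2_1$ and Axiom~$2_2$), while Proposition \ref{ann0_beta} provides both the anticommutativity of $\cB(A)$ and the equality $\ann(\cB(A))=0$.

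With these hypotheses in force, Lemma \ref{aas1}(iii) immediately yields that $\cB(A)$ is associative. As usual I would dispose of the easy characteristic separately: if $\ch F\neq 2$, then Lemma \ref{aas1}(iv) forces $A=0$, hence $\cB(A)=0$, which is trivially associative; the only case with content is $\ch F=2$, and there the single application of Lemma \ref{aas1}(iii) above settles associativity at once.

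For the final clause $\cB(A)\in\alt$, no further work is needed: every associative algebra is alternative, as recorded at the start of Section \ref{def}, so associativity of $\cB(A)$ gives $\cB(A)\in\alt$ directly. I do not anticipate a real obstacle in this corollary, since the substantive work has been front-loaded into Propositions \ref{antig_ann0} and \ref{ann0_beta} and into Lemma \ref{aas1}; the only delicate point is recognizing that those results make $\cB(A)$ into a bona fide object of $\galt$ satisfying exactly the lemma's hypotheses, so that the lemma applies to $\cB(A)$ verbatim.
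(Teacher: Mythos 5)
Your proof is correct and follows essentially the same route as the paper, whose entire proof reads ``Apply Proposition \ref{ann0_beta} and Lemma \ref{aas1} (iii).'' You are in fact slightly more careful than the paper in explicitly invoking Proposition \ref{antig_ann0} to guarantee that $\cB(A)$ is g-alternative (a hypothesis Lemma \ref{aas1}(iii) genuinely needs and which the paper leaves implicit, having established $\cB(A)\in\alt$ in the paragraph preceding the corollary), and your closing observation that associative implies alternative matches the remark recorded in Section \ref{def}.
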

\begin{proof} Apply Proposition \ref{ann0_beta} and Lemma \ref{aas1} (iii).
\end{proof}

\begin{theo} \label{act_alt} If $A$ is anticommutative alternative algebra over a
field $F$ with $\ann(A)=0$, then there exists an actor of
$A$ in $\alt$ and
 $\act(A)=\cB_{\alt}(A)=\cB(A)$.
 \end{theo}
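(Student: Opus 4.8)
The plan is to reduce the statement to the $\galt$-theory already developed and then to upgrade the $\galt$-derived action of $\cB(A)$ on $A$ to a derived action in the smaller category $\alt$. First I would dispose of the case $\ch F \neq 2$: by Lemma \ref{aas1}~(iv) the hypotheses force $A = 0$, so every object in sight is zero and the equalities $\act(A) = \cB_{\alt}(A) = \cB(A) = 0$ hold trivially. Hence I may assume $\ch F = 2$, so that $A \in \alt \subset \galt$ is an anticommutative g-alternative algebra with $\ann(A) = 0$, and all the earlier results for such $A$ apply.

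Next I would gather the structural input. By Corollary \ref{anti_ann0}, $\cB(A)$ is an actor of $A$ in $\galt$ and $\act_{\galt}(A) = \cB(A)$; in particular (via Lemma \ref{asoci10}, Proposition \ref{ann0} and Theorem \ref{asoci0_derived}) we have $\asoci(A) = 0$, the action of $\cB(A)$ on $A$ is a derived action in $\galt$ — so the identities I$_1$--I$_4$ and II$_1$--II$_4$ hold — and this action is anticommutative, i.e. $ba = -ab$ for $b \in \cB(A)$, $a \in A$. Moreover, by Corollary \ref{beta_alt}, $\cB(A)$ is associative, so in particular $\cB(A) \in \alt$.

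The crux is to show that this action is in fact a derived action of $\cB(A)$ on $A$ in $\alt$. Since $A$ and $\cB(A)$ both lie in $\alt$ and the eight identities I$_1$--I$_4$, II$_1$--II$_4$ already hold, it remains only to verify the two flexible-law identities III$_1$: $a(ba) = (ab)a$ and III$_2$: $b(ab) = (ba)b$ for $a \in A$, $b \in \cB(A)$. Here I would use anticommutativity twice: since $ba \in A$ and $A$ is anticommutative, $a(ba) = -(ba)a$, while anticommutativity of the action gives $(ab)a = (-ba)a = -(ba)a$, so III$_1$ holds; and since $ab \in A$, anticommutativity of the action yields $b(ab) = -(ab)b$ and $(ba)b = (-ab)b = -(ab)b$, so III$_2$ holds. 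Thus $\cB(A) \ltimes A \in \alt$ and the action is derived in $\alt$. Feeding a derived action of an arbitrary $C \in \alt$ on $A$ (which is a fortiori derived in $\galt$) through the universal property of the actor $\cB(A)$ in $\galt$ — the unique comparison morphism $C \to \cB(A)$ is an algebra homomorphism between alternative algebras, hence a morphism in $\alt$ — shows that $\cB(A)$ enjoys the universal property of Definition \ref{actor} in $\alt$; therefore $\cB(A)$ is an actor of $A$ in $\alt$ and $\act(A) = \cB(A)$.

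Finally I would identify $\cB_{\alt}(A)$ with $\cB(A)$. We already have the inclusion of algebras $\cB_{\alt}(A) \subseteq \cB(A)$; and since $A$ now has an actor in $\alt$, Theorem \ref{actor_semi} applied to $\bC = \alt$ gives $\act(A) = \cB_{\alt}(A)$. Comparing with $\act(A) = \cB(A)$ and invoking uniqueness of the actor forces the inclusion to be an equality, whence $\cB_{\alt}(A) = \cB(A)$. I expect the only genuinely new step to be the verification of III$_1$ and III$_2$ — everything else being an assembly of the earlier propositions and corollaries — the conceptual point being precisely that a derived action in $\galt$ need not remain derived in the smaller category $\alt$, which is why these flexible-law identities must be checked by hand.
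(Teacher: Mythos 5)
Your proposal is correct and follows the same skeleton as the paper's proof: dispose of $\ch F\neq 2$ via Lemma \ref{aas1}~(iv), import the $\galt$-results (Corollaries \ref{beta_alt} and \ref{anti_ann0}), verify the flexible-law identities III$_1$ and III$_2$, and conclude via the universal property and Theorem \ref{actor_semi}. The one step where you genuinely diverge is the verification of III$_1$ and III$_2$, which is indeed the crux. The paper obtains them from the congruences $\bar{S}_1^{\rm as}\simeq 0$ and $\bar{S}_2^{\rm as}\simeq 0$ (Lemma \ref{bar_aas1} and Proposition \ref{bar_as2}), upgraded to genuine equalities $\bar{S}_1^{\rm as}=\bar{S}_2^{\rm as}=0$ once $\asoci(A)=0$ is known from Lemma \ref{asoci10}; specializing the (as)-type elements to the triples $(a,b,a)$ and $(b,a,b)$ then yields III$_1$ and III$_2$. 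You instead give a direct computation, $a(ba)=-(ba)a=(ab)a$ and $b(ab)=-(ab)b=(ba)b$, using only the anticommutativity of $A$ and of the action --- the latter legitimately supplied by Proposition \ref{ann0} applied to $B=\cB(A)$ once the action is known to be derived in $\galt$. Your route is more elementary and self-contained at that point; the paper's is a uniform application of the Section~\ref{properties} machinery it has already built. The remaining steps --- the full-subcategory argument for the universal property and the identification $\cB_{\alt}(A)=\cB(A)$ via Theorem \ref{actor_semi} --- match the paper's, with your version spelling out details the paper leaves implicit.
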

\begin{proof} By Proposition \ref{galt}, for any field $F$, we have $\alt \subseteq
\galt$. If $\ch F \neq 2$, then, by Lemma \ref{aas1} (iv), it follows
that $A=0$, and obviously $\act(A)=\cB(A)=0$. Suppose $\ch F=2$.
Since $A$ is a g-alternative algebra,
 we can apply Corollary \ref{beta_alt}, and therefore
$\cB(A)$ is an alternative algebra. By Corollary \ref{anti_ann0} the action of
$\cB(A)$ on $A$
 is a derived action in $\galt$.
 We shall prove that the action of $\cB(A)$ on $A$ satisfies
 the conditions III$_1$ and III$_2$ of Section \ref{acting}, for any $a\in A$
 and $b\in \cB(A)$. By Proposition \ref{ann0}, every derived action on $A$ is anticommutative, and by Lemma \ref{asoci10}, under the conditions of the
 theorem we obtain $\asoci(A)=0$; therefore applying Lemma
 \ref{bar_aas1} and Proposition \ref{bar_as2} we obtain that $\bar{S}_1^{\rm as}=0$ and
 $\bar{S}_2^{\rm as}=0$, which imply respectively the equalities III$_1$ and
 III$_2$. $\cB(A)$ is an actor of $A$ in $\galt$, therefore by the definition of an actor we obtain, that $\cB(A)$
 is an actor of $A$ in $\alt$. Now the result follows from
 Theorem \ref{actor_semi}.
\end{proof}

 The construction of an $F$-algebra of bimultiplications
 $\bim_{\alt}(A)$ of an alternative algebra $A$ over a field $F$ is analogous
 to $\bim_{\galt}(A)$, where in addition we require that  pairs $f=(f*,*f)$ of $F$-linear
maps from $A$ to $A$ together with conditions \eqref{pair} satisfy the
following two conditions:
\begin{align*}
a(fa)=(af)a \, ,\\
f(af)=(fa)f \, .
\end{align*}
The statements analogous to Theorem \ref{act_bim} and Corollary
\ref{bim_beta} take place for alternative algebras. Obviously,
every commutative associative algebra over a field with
characteristic 2 is anticommutative g-alternative algebra, and
every anticommutative g-alternative algebra $A$ over the same kind
field with $\ann(A)=0$ is associative and commutative (apply Lemma
\ref{aas1} (ii)). Let $\bim(A)$ denote the algebra of bimultiplications
of an associative algebra $A$ \cite{Ho}. It is proved in
\cite{CDL2} that if $A$ is any associative algebra  with $\ann(A)=0$, then there
exists an actor of $A$ in the category of associative algebras and
$\act(A)=\bim(A)$. The analogous result we have in the category of
commutative associative algebras, under the same condition an
actor exists and this is the commutative algebra of
multiplications $M(A)$ \cite{LS} (or multipliers \cite{LL}) of
$A$, which is defined as an algebra of $F$-linear maps $f \colon A \to A$
with $f(aa')=f(a)a'$, for any $a,a'\in A$. Let $A$ be an
associative commutative algebra over a field $F$ with
characteristic 2 and $\ann(A)=0$. The equalities $\bar{S}_1^{\rm
as}=0$ and
 $\bar{S}_2^{\rm as}=0$ in the
proof of Theorem \ref{act_alt} imply that the action of $\cB(A)$ on $A$ is a
derived action in the category of associative algebras. At the
same time we know that $\cB(A)$  is anticommutative and its action
on $A$ is anticommutative as well, therefore $\cB(A)$ is
commutative and the action on $A$ is commutative too. Therefore,
in the same way as in the proof of Theorem \ref{act_alt}, we conclude that
$\cB(A)$ is an actor of $A$ in the categories of associative and
commutative associative algebras. From the universal property of
an actor we obtain, that if $A$ is commutative associative algebra
over a field $F$ with characteristic 2 and $\ann(A)=0$, then
$\cB(A)=\bim(A)=M(A)$.

\section*{Acknowledgments}
The authors were supported by MICINN (Spain), Grant MTM 2009-14464-C02 (European FEDER support included) and project
Ingenio Mathematica (i-MATH) No. CSD2006-00032 (Consolider Ingenio
2010) and by Xunta de Galicia, Grant PGIDITI06PXIB371128PR. The
second author is grateful to Santiago de Compostela and Vigo
Universities and to Georgian National Science Foundation, Ref. ST06/3-004, for financial supports.

\end{document}